\newcommand{\R}{\mathbb{R}}
\newcommand{\tr}{^{\sf T}}
\newcommand{\m}[1]{{\bf{#1}}}
\newtheorem{theorem}{Theorem}[section]
\newtheorem{lemma}[theorem]{Lemma}
\theoremstyle{definition}
\newtheorem{definition}[theorem]{Definition}
\theoremstyle{remark}
\newtheorem{remark}[theorem]{Remark}
\theoremstyle{corollary}
\newtheorem{corollary}[theorem]{Corollary}
\newtheorem{assum}{Assumption}
\numberwithin{equation}{section}
\begin{document}
\title{
A Unified Algorithm for Nonconvex Decentralized  Nonlinear Optimization
}
%
\author{Hao Wu}
\address{School of Mathematics,
	Nanjing University of Aeronautics and Astronautics}
\curraddr{}
\email{wuhoo104@nuaa.edu.cn}
\thanks{}

\author{Liping Wang}
\address{School of Mathematics,
	Nanjing University of Aeronautics and Astronautics}
\curraddr{}
\email{wlpmath@nuaa.edu.cn}
\thanks{}

\subjclass[2020]{Primary }

\date{}

\dedicatory{}

\begin{abstract}
In this paper, we study the decentralized optimization problem of minimizing a finite sum of continuously differentiable and possibly nonconvex functions over a fixed-connected undirected network. 
We propose a unified decentralized nonconvex algorithmic framework that includes many existing state-of-the-art gradient tracking and quasi-Newton algorithms. 
A general framework for the convergence analysis of our unified algorithm is presented 
under both nonconvex and the Kurdyka-{\L}ojasiewicz condition settings.
In particular, some new quasi-Newton algorithms under this framework are proposed. 
 Our numerical results show that these newly developed algorithms are very efficient 
compared with other state-of-the-art algorithms for solving decentralized nonconvex nonlinear optimization.

\end{abstract}

\maketitle

\pagestyle{myheadings}
\thispagestyle{plain}
\markboth{HAO WU, LIPING WANG, AND HONGCHAO ZHANG}
{UNIFIED ALGORITHM FOR NONCONVEX DECENTRALIZED OPTIMIZATION}

	\section{INTRODUCTION}
In the era of data explosion and connected intelligence, decentralized optimization has emerged as a fundamental computational paradigm for large-scale and privacy-aware systems. This paper focuses on solving optimization problems over multi-node networks where no central server exists---a setting that naturally arises in modern applications including but not limited to decentralized resource control \cite{fusco2021decentralized}, wireless networks \cite{jeong2022asynchronous}, decentralized machine learning \cite{zhang2022distributed}, power systems \cite{hussan2024decentralized} and federated learning \cite{lei2025decentralized}. 
More specifically,
we consider a network of $n$ nodes, interconnected via an undirected and connected graph $\mathcal{G}=\left(\mathcal{V},\mathcal{E}\right)$, where $\mathcal{V} = \{1,\ldots,n\}$ is the set of nodes, and $\mathcal{E}$ is the collection of unordered edges. Each node $i \in \mathcal{V}$ possesses a private local objective function $f_i:\R^{p} \rightarrow \R$ and the ultimate goal is to solve the consensus 
optimization problem:
\begin{equation}\label{obj_fun1}
	\mathop {\min } \limits_{\m{z} \in {\R^p}} \; F(\m{z}):= \frac{1}{n}\sum\limits_{i = 1}^n {{f_i}(\m{z})},
\end{equation}
where each $f_i$ is  continuously differentiable, possibly nonconvex, and known only to node $i$. Unlike the distributed setup with a central server, the fully decentralized architecture requires all nodes to collaboratively reach an optimal solution using only local computation and peer-to-peer communication.

For decentralized optimization, gradient-based first-order methods have been widely used
 due to their simple implementation and low computational cost at each iteration.
  Existing well-developed works include Decentralized Gradient Descent (DGD) \cite{nedic2009distributed,yuan2016convergence,zeng2018nonconvex}, gradient tracking methods and
their variants \cite{xu2015augmented,qu2017harnessing,nedic2017achieving,nedic2017geometrically,
jakovetic2019exact,gao2022achieving,zhang2020distributed,song2024optimal}, exact diffusion methods \cite{shi2015extra,yuan2018exact,alghunaim2024local,wu2024music}, momentum methods \cite{xin2019distributed,takezawa2022momentum,huang2024accelerated}, and  primal-dual methods \cite{shi2014linear,jakovetic2020primal,mancino2023decentralized}.
The convergence rates of gradient-based methods are highly sensitive to the condition number 
of the objective function. 
Hence, second-order Newton type decentralized algorithms have been developed to improve the convergence speed
 \cite{mokhtari2016network,mokhtari2016decentralized,zhang2021newton,jakovetic2022hessian,liu2023communication}. Numerical experiments demonstrate that these Hessian-based second-order algorithms exhibit significantly 
faster convergence compared to gradient-based algorithms. However, despite their empirical success, these methods are still theoretically limited to a no-faster-than-linear convergence rate and incur significant computational costs due to the calculation of Hessian matrices and/or their inverses.
To reduce the computational cost, quasi-Newton type decentralized algorithms have also recently been developed
 \cite{eisen2017decentralized,li2021bfgs,zhang2023variance,10589277,wang2025decentralized}, 
where  the curvature information of the Hessian of the objective function is 
captured with a much lower computational cost.
However, most of the existing decentralized quasi-Newton methods are proposed for solving convex or strongly convex minimization problems. \textit{It is still unclear whether some of these methods can be applied to nonconvex decentralized optimization with convergence guarantee.} 
It is well-known that quasi-Newton methods with correction techniques have been extensively used 
in traditional nonlinear optimization to ensure robust performance \cite{li2001modified,liu2014regularized,chen2019stochastic}, where correction techniques are the key to tackling non-convexity and stochasticity.
Recently, under the regularization and damping techniques, Zhang et al.  \cite{zhang2023variance} proposed 
a decentralized quasi-Newton method, which guarantees the positive definiteness of
the updated quasi-Newton matrices even though the overall problem is nonconvex. However, the global convergence still relies on the strong convexity assumption.
The above observations motivate us to develop more in-depth unified analysis on quasi-Newton methods to solve
nonconvex decentralized optimization.

Many works \cite{alghunaim2020decentralized,berahas2024balancing,jakovetic2018unification,sundararajan2020analysis,
xu2021distributed} in the literature have attempted to unify various gradient-based
decentralized optimization methods in the convex or strongly convex settings,
while few works \cite{alghunaim2022unified,du2024unified} provide a unified analysis for solving nonconvex decentralized optimization.
Moreover, the frameworks in \cite{alghunaim2022unified,du2024unified} do not cover quasi-Newton methods.
In this paper, we propose a class of decentralized algorithms for solving nonconvex decentralized optimization,
which also include variants of quasi-Newton methods with guaranteed convergence in the sense that 
every limit point of the iterates is a stationary point.
In recent years, the Kurdyka-{\L}ojasiewicz (K{\L}) property, initially introduced by {\L}ojasiewicz \cite{lojasiewicz1963propriete}, has been successfully employed to strengthen the convergence analysis of many optimization algorithms for nonconvex optimization in the centralized setting \cite{attouch2010proximal,qian2025convergence}. 
However, extending such analyses to decentralized optimization poses significant challenges,
primarily due to the nonmonotone decrease of the objective function value in the convergence analysis.
To facilitate the analysis, most existing works  
\cite{zeng2018nonconvex,daneshmand2020second,wang2024double} on the convergence of decentralized algorithms 
 need to assume a potential function satisfies the K{\L} property and
the iterates generated by a descent algorithm are bounded.
However, the K{\L} property imposed on the potential function cannot generally  transfer to the objective function. 
 Although Chen et al. \cite{chen2024enhancing} leveraged the K{\L} property directly on 
the objective function, they assumes a strong connectivity of the underlying network.
In this paper, under the  K{\L} property of the objective function,
we strengthen the convergence result to show that the entire sequence generated 
by our algorithmic framework converges to a stationary point.
The main contributions of this paper can be summarized as follows. 
\begin{itemize}
	\item [1.] 
	We introduce a Unified Decentralized Optimization Algorithm (UDOA) framework for solving nonconvex decentralized optimization, which includes
various decentralized gradient tracking methods \cite{xu2015augmented,qu2017harnessing,nedic2017achieving,nedic2017geometrically,di2016next,scutari2019distributed} and 
some well-established decentralized quasi-Newton methods \cite{jakovetic2019exact,gao2022achieving,10589277,zhang2023variance}.
This provides a unified approach to analyzing the existing decentralized optimization algorithms. Moreover, based on this framework,
new efficient decentralized algorithms can be designed by choosing different combinations of the parametrization matrices, such as
the  communication and second-order approximation matrices, in this framework.

	\item[2.] We establish the global convergence of UDOA in the nonconvex setting
under standard assumptions, such as Lipschitz continuous gradients and lower-bounded objective values,
which remains challenging for decentralized quasi-Newton methods.
Furthermore, by relying solely on the K{\L} property of the objective function, we show the convergence of the 
entire iterate sequence generated by the algorithm framework, 
addressing the key difficulty of nonmonotone objective function value reductions when establishing convergence in decentralized optimization.
To the best of our knowledge, this is the first time to show the  full sequence convergence of the iterates generated by
a broader class of quasi-Newton decentralized optimization methods.

	\item[3.] 
	Based on the UDOA framework, we propose several innovative and computationally efficient decentralized quasi-Newton methods. 
One key technical contribution is the design of novel strategies within the UDOA framework
to build positive definite quasi-Newton matrices for approximating the Hessian inverse of the objective function.
These strategies only require  vector-vector products which would keep minimum both computation and memory costs per iteration. 
Our numerical experiments show that the newly designed algorithms are highly effective compared with the current state-of-the-art algorithms for 
 decentralized nonconvex optimization.
 
\end{itemize}
The paper is organized as follows. In Section~2, we first propose the UDOA framework and then
show the convergence of the subsequence of the iterates in the Subsections~2.1 and 2.2, respectively.
The convergence of the entire iterate sequence under   
the K{\L} property of the objective function as well as the strategies for generating quasi-Newton approximation matrices 
are presented in Subsections~2.3 and 2.4. 
Numerical experiments on comparing our new methods with other well-established methods for solving decentralized nonconvex optimization are presented in Section~3.
We finally draw some conclusions in Section 4.

\subsection{Notation}
We use uppercase boldface letters, e.g.~$\m{W}$, for matrices and lowercase boldface letters,  e.g.~$\m{w}$, for vectors.
For any vectors $\m{v}_i \in \R^p$, $i=1,\ldots,n$, we define  $\bar{\m{v}}=\frac{1}{n} \sum_{i=1}^n \m{v}_i$ and $\m{v} = [\m{v}_1; \m{v}_2; \ldots; \m{v}_n] \in \R^{np}$.
Given an undirected network  $\mathcal{G}=\left(\mathcal{V},\mathcal{E}\right)$,
let $\m{x}_i \in \R^p $ denote the local copy of the global variable $\m{z} \in \R^p$ at node $i$ and $\mathcal{N}_i$ denote the set consisting of the neighbors of node $i$ 
(for convenience, we treat node $i$ itself as one of its neighbors). 
We define $f(\m{x}) = \sum_{i = 1}^n {{f_i}({\m{x}_i})}$ and use $\m{g}^t$, $\m{g}_i^t$ to stand for $\nabla f(\m{x}^t)$, $\nabla f_i(\m{x}^t_{i})$ respectively,
where, for clarification, the gradient of $f(\m{x})$ is defined as
$ \nabla f(\m{x}) =\left[ \nabla f_1(\m{x}_{1}); \nabla f_2(\m{x}_{2}); \ldots; \nabla f_n(\m{x}_{n}) \right] \in \R^{np}$. 
In addition,  we define $\overline{\nabla} f(\m{x}^t)=\frac{1}{n} \sum_{i=1}^n \nabla f_i(\m{x}_i^t) 
\in \R^p$.
We say that $\m{x}$ is consensual or gets consensus if ${\m{x}_1}={\m{x}_2}=\ldots={\m{x}_n}$. 
Let $\m{I}_p$ be the $p \times p$ identity matrix and $\m{I}$ be $\m{I}_{np}$ for simplicity. The Kronecker product is denoted as $\otimes$.
Given a vector $\m{v}$ and a symmetric  matrix $\m{N}$, $\operatorname{span}(\m{v})$ stands for the linear subspace spanned by $\m{v}$; 
$\mbox{Null}(\m{N})$ denotes the null space of $\m{N}$;
${\lambda _{\min }(\m{N})}$,  ${\lambda _{\max }(\m{N})}$, and $\rho(\m{N})$ denote the smallest eigenvalue, the largest eigenvalue, and  the spectral radius of $\m{N}$,  respectively;
For symmetric matrices $\m{N}_1$ and $\m{N}_2$ with same dimension, $\m{N}_1 \succeq \m{N}_2$ means $\m{N}_1 - \m{N}_2$ is positive semidefinite, while $\m{N}_1 \ge \m{N}_2$ means $\m{N}_1 - \m{N}_2$ is component-wise nonnegative.
We denote $\operatorname{log}_{10}(\cdot)$ by $\operatorname{log}(\cdot)$  and define $\m{M}=\frac{1}{n} \m{1}_n\m{1}_n\tr\otimes \m{I}_p$ where $\m{1}_n \in \R^n$ is the
vector with all components ones. {For any vector $\m{v}$ and matrix $\m{N}$, $\Vert\m{v}\Vert$ and $\Vert\m{N}\Vert$ denote the Euclidean norm of $\m{v}$ and the Frobenius norm of $\m{N}$, respectively.}

\section{ALGORITHM DEVELOPMENT, CONVERGENCE ANALYSIS, AND PRACTICAL IMPLEMENTATION}
We need the following assumptions on the objective function.
\begin{assum}\label{as0} {For the objective function in (\ref{obj_fun1}), we assume the following properties hold.} \\
 (a) The global aggregation function $F(\m{z})= \frac{1}{n}\sum\nolimits_{i = 1}^n {{f_i}(\m{z})}$ is bounded from below, 
i.e., $F(\m{z}) \geq \underline{F}$ for any  $\m{z} \in {\mathbb{R}^p}$, where $\underline{F} \in \R$ is some constant.\\
 (b) For any $i= 1, \ldots, n$, $\nabla f_i$ is Lipschitz continuous with a constant $L>0$, 
i.e., for any $\m{z},\tilde{\m{z}} \in {\mathbb{R}^p}$, it has
	\begin{equation}\label{3.1}
		\left\| {\nabla {f_i}(\m{z}) - \nabla {f_i}(\tilde{\m{z}})} \right\| \le L \left\| {\m{z} - \tilde{\m{z}}} \right\|.
	\end{equation}
\end{assum}

In decentralized optimization it is convenient to parameterize communication by a mixing matrix $\tilde{\m{W}}=[ \tilde{W}_{i j} ]\in \R^{n \times n}$, which is defined as follows.
\begin{definition}\label{mix}
	(Mixing matrix $\tilde{\m{W}}$ for given network $\mathcal{G} = \left(\mathcal{V},\mathcal{E}\right)$)
	\begin{itemize}
		\item [1.] $\tilde{\m{W}}$ is nonnegative, where each component $\tilde{W}_{ij}$ characterizes the active link $(i,j)$, i.e., $\tilde{{W}}_{ij}>0$ if $j \in \mathcal{N}_i$;  $\tilde{{W}}_{ij}=0$, otherwise.
		\item [2.] $\tilde{\m{W}}$ is symmetric and doubly stochastic, i.e., $\tilde{\m{W}}=\tilde{\m{W}}\tr$ and $\tilde{\m{W}}\m{1}_n=\m{1}_n$.\\
	\end{itemize}
\end{definition}
There are a few common choices for the mixing matrix $\tilde{\m{W}}$, such as the Laplacian-based constant edge weight matrix \cite{sayed2014diffusion} and
the Metropolis constant edge weight matrix \cite{xiao2007distributed}.
Let $\lambda_{i}(\tilde{\m{W}})$ denote the $i$-th largest eigenvalue of $\tilde{\m{W}}$ and $\sigma$ be the second largest magnitude eigenvalue of $\tilde{\m{W}}$. 
Then, the following properties hold.
\begin{lemma}\label{property W}
	For $\tilde{\m{W}}$ defined in Definition~\ref{mix} and  $\m{W} :=\tilde{\m{W}}\otimes\m{I}_p$, we have
	\begin{itemize}
		\item [(a)] $1=\lambda_{1}(\tilde{\m{W}})>\lambda_{2}(\tilde{\m{W}})\geq\ldots\geq\lambda_{n}(\tilde{\m{W}})>-1$;
		\item [(b)] $	0<\rho(\m{W}-\m{M})=\sigma=\max \left\{|\lambda_{2}(\tilde{\m{W}})|, |\lambda_{n}(\tilde{\m{W}})|\right\}<1$;
		\item [(c)] $\m{M}=\m{M}\m{W}=\m{W}\m{M} = \m{M}^2$;
		\item [(d)]  
			$\|\m{W}^{k}\m{x}-\m{M}\m{x}\|=\|(\m{W}^{k}-\m{M})(\m{x}-\m{M}\m{x})\| \leq \sigma^{k} \|\m{x}-\m{M}\m{x}\|$
		for any $\m{x} \in \mathbb{R}^{np}$ and $k \geq 1$. 
	\end{itemize}
\end{lemma}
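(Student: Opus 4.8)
The plan is to handle the four items in sequence, deriving item (a) from the Perron--Frobenius theory of nonnegative matrices and then obtaining (b)--(d) by elementary linear algebra built on the Kronecker structure of $\m{W}$ and the projection property of $\m{M}$. For item (a), I would first note that symmetry of $\tilde{\m{W}}$ forces all its eigenvalues to be real, and that double stochasticity gives $\tilde{\m{W}}\m{1}_n=\m{1}_n$, so $1$ is an eigenvalue with eigenvector $\m{1}_n$; combined with nonnegativity, a Gershgorin (or Perron--Frobenius) argument shows every eigenvalue lies in $[-1,1]$, hence $\lambda_1(\tilde{\m{W}})=1$. The two \emph{strict} inequalities are where the network structure enters: connectivity of $\mathcal{G}$ makes $\tilde{\m{W}}$ irreducible, so the Perron eigenvalue $1$ is simple, giving $\lambda_1>\lambda_2$; and the convention $i\in\mathcal{N}_i$ forces a strictly positive diagonal $\tilde{W}_{ii}>0$, which makes the irreducible matrix primitive (aperiodic) and rules out $-1$ as an eigenvalue, yielding $\lambda_n>-1$.

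For item (b), I would compute the spectrum of $\m{W}-\m{M}$ directly. Writing $\m{W}-\m{M}=(\tilde{\m{W}}-\tfrac{1}{n}\m{1}_n\m{1}_n\tr)\otimes\m{I}_p$ and observing that $\tfrac{1}{n}\m{1}_n\m{1}_n\tr$ is the orthogonal projector onto $\operatorname{span}(\m{1}_n)$, which is exactly the Perron eigenspace of $\tilde{\m{W}}$, the subtraction simply replaces the eigenvalue $1$ by $0$ and leaves $\lambda_2,\ldots,\lambda_n$ untouched. Hence the eigenvalues of $\tilde{\m{W}}-\tfrac{1}{n}\m{1}_n\m{1}_n\tr$ are $\{0,\lambda_2,\ldots,\lambda_n\}$, its spectral radius is $\max\{|\lambda_2|,|\lambda_n|\}=\sigma$, and tensoring with $\m{I}_p$ only repeats these values, so $\rho(\m{W}-\m{M})=\sigma$; the bounds $0<\sigma<1$ then follow from (a). Item (c) is a routine Kronecker computation: using the mixed-product rule together with the row and column sum identities $\tilde{\m{W}}\m{1}_n=\m{1}_n=\tilde{\m{W}}\tr\m{1}_n$, both $\m{M}\m{W}$ and $\m{W}\m{M}$ collapse to $\tfrac{1}{n}\m{1}_n\m{1}_n\tr\otimes\m{I}_p=\m{M}$.

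Finally, for item (d) I would exploit (c) together with the idempotence $\m{M}^2=\m{M}$. These identities give, by an easy induction, both $\m{W}^k\m{M}=\m{M}\m{W}^k=\m{M}$ and the key algebraic identity $(\m{W}-\m{M})^k=\m{W}^k-\m{M}$. The first batch yields the claimed equality $\m{W}^k\m{x}-\m{M}\m{x}=(\m{W}^k-\m{M})(\m{x}-\m{M}\m{x})$ after expanding the right-hand side; the second lets me write $\m{W}^k-\m{M}=(\m{W}-\m{M})^k$, and since $\m{W}-\m{M}$ is symmetric its operator norm equals its spectral radius, so $\norm{(\m{W}-\m{M})^k}=\sigma^k$ by (b), which delivers the stated bound.

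I expect the only genuine obstacle to be the two strict spectral inequalities in (a); everything else is bookkeeping via the projection and Kronecker identities. These inequalities require verifying that $\tilde{\m{W}}$ satisfies the hypotheses of the Perron--Frobenius theorem, namely irreducibility (from connectivity of $\mathcal{G}$) for simplicity of the top eigenvalue and primitivity (from the positive diagonal) for excluding $-1$, so the care lies precisely in confirming that these structural assumptions translate into the needed matrix-theoretic properties.
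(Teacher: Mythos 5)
Your proposal is correct, and for the only part the paper actually proves in detail (item d) it follows essentially the same route: establish $(\m{W}-\m{M})^{k}=\m{W}^{k}-\m{M}$ by induction from $\m{M}\m{W}=\m{W}\m{M}=\m{M}$ and $\m{M}^{2}=\m{M}$, then bound $\|(\m{W}-\m{M})^{k}\|\leq\|\m{W}-\m{M}\|^{k}=\sigma^{k}$ using symmetry of $\m{W}-\m{M}$. The difference is that the paper disposes of items a--c by citing \cite{xin2019distributed}, whereas you supply a self-contained argument: Perron--Frobenius together with Gershgorin for a, the observation that subtracting the projector $\frac{1}{n}\m{1}_n\m{1}_n\tr$ annihilates exactly the Perron eigenvalue of $\tilde{\m{W}}$ for b, and the mixed-product rule with double stochasticity for c. These arguments are sound; note only that the positive diagonal $\tilde{W}_{ii}>0$ (forced by the convention $i\in\mathcal{N}_i$) already gives $\lambda_n(\tilde{\m{W}})\geq\min_i(2\tilde{W}_{ii}-1)>-1$ directly from the Gershgorin intervals $[2\tilde{W}_{ii}-1,\,1]$, so invoking primitivity is a heavier tool than needed there. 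One small caveat: the strict lower bound $0<\sigma$ in item b does \emph{not} follow from item a as you assert, since item a only controls $\lambda_2<1$ and $\lambda_n>-1$; indeed $\sigma=0$ occurs for $\tilde{\m{W}}=\frac{1}{n}\m{1}_n\m{1}_n\tr$ (complete graph with uniform weights), which satisfies Definition~\ref{mix}. This is a defect of the lemma statement rather than of your argument, and the positivity of $\sigma$ is never used in the paper's analysis, but as written your justification of that particular inequality has a gap.
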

\begin{proof}
	For the proof of properties (a)-(c), one may refer to \cite{xin2019distributed}.
        For property (d), we first observe from property (c) that
\[
(\m{W}^{k}-\m{M})(\m{I} - \m{M}) = \m{W}^{k}-\m{M} -\m{W}^{k}\m{M}+\m{M}^2 = \m{W}^{k}-\m{M}.
\]
Hence, the first equality in property (d) holds. We next show by induction that $(\m{W}-\m{M})^{k}=\m{W}^k-\m{M}$ for $k \geq 1$,
which obviously holds for $k=1$. Now, suppose $(\m{W}-\m{M})^{k-1}=\m{W}^{k-1}-\m{M}$, we have from property (c) that
\begin{align*}
		(\m{W}-\m{M})^{k}=&(\m{W}-\m{M})^{k-1}(\m{W}-\m{M})=(\m{W}^{k-1}-\m{M})(\m{W}-\m{M})\\
		=&\m{W}^k-\m{W}^{k-1}\m{M}-\m{M}\m{W}^{k-1}+\m{M}=\m{W}^k-\m{M}.
\end{align*}
So, we have from property (b) that
\[
\left\|\m{W}^{k}-\m{M} \right\|=\left\|(\m{W}-\m{M})^{k} \right\|\leq \left\|\m{W}-\m{M} \right\|^{k}\leq \sigma^{k},
\]
which gives the second inequality in property (d).
\end{proof}

\subsection{Unified Decentralized Optimization Algorithm}
\begin{algorithm}[htb]
	\caption{Unified Decentralized Optimization Algorithm (UDOA)}
	\label{alg:Framwork}
	
	\SetKwInOut{Input}{Input}
	\SetKwInOut{Output}{Output}
	
	\Input{Initial point $\m{x}^0$, Maximum iteration $T>0$, Stepsize $\alpha>0$, Parameterization matrices $\m{A}=\tilde{\m{A}}\otimes\m{I}_p$, $\m{B}=\tilde{\m{B}}\otimes\m{I}_p$, $\m{C}=\tilde{\m{C}}\otimes\m{I}_p$, and $\m{D}=\tilde{\m{D}}\otimes\m{I}_p$.}
	\Output{$\m{x}^T$.}
	
	\BlankLine
	Set $t = 0$, $\m{v}^0 = \m{g}^0$, and initialize $\m{H}^0$\;
	
	\While{$t < T$}{
		\tcp{Variable update}
		$\m{x}^{t+1} = \m{A}\m{x}^{t}-\alpha \m{B}\m{H}^t \m{v}^{t}$\;
		
		\tcp{Gradient tracking direction update}
		$\m{v}^{t+1} = \m{C}\m{v}^t+\m{D}(\m{g}^{t+1}-\m{g}^t)$\;
		
		\tcp{Metric matrix update}
		Update $\m{H}^{t+1}$\;
		
		$t = t+1$\;
	}
\end{algorithm}
Our unified decentralized optimization algorithm (UDOA) is described in Algorithm~\ref{alg:Framwork},
 where $\alpha >0$ is a stepsize,  $\m{v}^t$ serves as a tracking vector to approximate the global gradient $\nabla F(\bar{\m{x}}^t)$,
$\m{H}^t$ is a symmetric block diagonal matrix whose $i$-th block is $\m{H}_i^t$,
$\m{A}=\tilde{\m{A}}\otimes\m{I}_p$, $\m{B}=\tilde{\m{B}}\otimes\m{I}_p$, $\m{C}=\tilde{\m{C}}\otimes\m{I}_p$, and $\m{D}=\tilde{\m{D}}\otimes\m{I}_p$ are 
some {parametrization} matrices satisfying the following structural assumption.
\begin{assum}\label{mixing}
	{The parametrization matrices $\tilde{\m{A}}$, $\tilde{\m{B}}$, $\tilde{\m{C}}$, and $\tilde{\m{D}}$ are nonnegative, symmetric, and doubly stochastic.
Moreover, $\tilde{\m{A}}$ and $\tilde{\m{C}}$ are not identity matrices for achieving consensus. }
\end{assum}
It is worth noting that the unified framework in \cite{alghunaim2022unified} restricts the {parametrization} matrices to be polynomial functions of 
the mixing matrix $\tilde{\m{W}}$ and must be simultaneously diagonalizable by a same orthogonal matrix for establishing global convergence.
{In addition}, by Assumption~\ref{mixing}, the iteration updates in UDOA, and Lemma~\ref{property W}, 
it can be shown from induction that the gradient tracking vector $\m{v}$ has the property that 
\begin{equation}\label{vequalg}
	\m{M}{\m{v}}^t=\m{M}{\m{g}}^t \quad \Longleftrightarrow \quad \bar{\m{v}}^t=\bar{\m{g}}^t.
\end{equation} 
One may refer to \cite[Lemma 7]{qu2017harnessing} for the detail proof. 
Note that the UDOA framework includes 
not only numerous state-of-the-art first-order decentralized optimization methods but also various decentralized quasi-Newton methods, 
where $\m{H}^t$ typically serves as an approximation to the inverse of the Hessian of the objective function.
We now demonstrate that by different choices of the matrices ${\tilde{\m{A}}, \tilde{\m{B}}, \tilde{\m{C}}, \tilde{\m{D}}, \m{H}^t}$ in
UDOA, we would have different decentralized optimization algorithms.  See Table~\ref{cases}.

\begin{table}
	\centering
	\caption{Special cases in our framework}\label{cases}
			\begin{threeparttable}
\begin{tabular}{cccccc}
	\hline
	\hline
	Methods&$\tilde{\m{A}}$& $\tilde{\m{B}}$& $\tilde{\m{C}}$&  $\tilde{\m{D}}$&  $\m{H}^t$\\
	\hline
	DIGing \cite{nedic2017achieving}, Harnessing \cite{qu2017harnessing}&$\tilde{\m{W}}$  &$\m{I}_p$  &$\tilde{\m{W}}$ &$\m{I}_p$  &$\m{I}$  \\
	SONATA \cite{scutari2019distributed}, NEXT \cite{di2016next}&$\tilde{\m{W}}$  &$\tilde{\m{W}}(\m{I}_p) $ &$\tilde{\m{W}}$  &$\m{I}_p(\tilde{\m{W}}) $ &$\m{I}$  \\
	Aug-DGM \cite{xu2015augmented}, ATC-DIGing \cite{nedic2017geometrically}&$\tilde{\m{W}}$ &$\tilde{\m{W}}$ &$\tilde{\m{W}}$ &$\tilde{\m{W}}$  &$\m{I}$  \\
	DR-LM-DFP \cite{zhang2023variance}, D-LM-BFGS \cite{zhang2023variance}&$\tilde{\m{W}}$  &$\m{I}_p$  &$\tilde{\m{W}}$  &$\m{I}_p$  &BFGS/DFP \tnote{$\ddagger$}  \\
	DQN \cite{shorinwa2024distributed}&$\tilde{\m{W}}$ & $\tilde{\m{W}}^2$ & $\tilde{\m{W}}$ & $\tilde{\m{W}}$ &BFGS  \\
	DGM-BB-C \cite{gao2022achieving}&$\tilde{\m{W}}^K$\tnote{$\dagger$}  &$\tilde{\m{W}}^K$ &$\tilde{\m{W}}^K$ &$\tilde{\m{W}}^K$  &BB  \\
	DSG\cite{jakovetic2019exact} &$\tilde{\m{W}}$ &$\m{I}_p$ &$\tilde{\m{W}}$ &$\m{I}_p$  &\eqref{DSG-update}  \\
	\hline
	\hline
\end{tabular}
\begin{tablenotes}
	\item[$\dagger$]  $\tilde{\m{W}}^K$ represents performing $K$ communications at one iteration.
	\item[$\ddagger$] ``BFGS'', ``DFP'', and ``BB'' means the matrix $\m{H}^t$ is generated by the BFGS, DFP, and BB \cite{barzilai1988two} quasi-Newton updates, respectively.
\end{tablenotes}
		\end{threeparttable}
\end{table}

Let us first consider the relationships of UDOA with numerous gradient-based methods by letting $\m{H}^t=\m{I}$ for all $t \ge 0$.\\
{\bf (I) ATC-GT method.}
Taking $\tilde{\m{A}}=\tilde{\m{B}}=\tilde{\m{C}}=\tilde{\m{D}}=\tilde{\m{W}}$ in Algorithm~\ref{alg:Framwork}, we obtain
 the Adapt-Then-Combine Gradient-Tracking (ATC-GT) method as the following:
\begin{align*} 
	& \m{x}^{t+1}= \m{W}(\m{x}^{t}-\alpha \m{v}^{t}),\\ 
	& \m{v}^{t+1}= \m{W}(\m{v}^t+(\m{g}^{t+1}-\m{g}^t)),
\end{align*}
which {includes} variants of gradient tracking methods such as Aug-DGM \cite{xu2015augmented} and ATC-DIGing \cite{nedic2017geometrically}. \\
{\bf (II) Non-ATC-GT method.}
Notice that the iterations of the DIGing \cite{nedic2017achieving} and Harnessing \cite{qu2017harnessing} methods can be realized by Algorithm~\ref{alg:Framwork}
with $\tilde{\m{A}}=\tilde{\m{C}}=\tilde{\m{W}}$ and $\tilde{\m{B}}=\tilde{\m{D}}=\m{I}_p$, that is
\begin{align*} 
	& \m{x}^{t+1} = \m{W}\m{x}^{t}-\alpha \m{v}^{t},\\ 
	& \m{v}^{t+1} = \m{W}\m{v}^t+(\m{g}^{t+1}-\m{g}^t).
\end{align*}
These methods are not in the ATC form and therefore classified as the non-ATC-GT methods. \\
{\bf (III) Semi-ATC-GT method.} By {taking} $\tilde{\m{A}}=\tilde{\m{B}}=\tilde{\m{C}}=\tilde{\m{W}}$ and $\tilde{\m{D}}=\m{I}_p$ in Algorithm~\ref{alg:Framwork}, we get the updates:
\begin{align*} 
	& \m{x}^{t+1}= \m{W}(\m{x}^{t}-\alpha \m{v}^{t}),\\ 
	& \m{v}^{t+1}= \m{W}\m{v}^t+(\m{g}^{t+1}-\m{g}^t),
\end{align*}
which is equivalent to the recursion:
\begin{align}
	 \m{x}^{t+2} &= \m{W}\m{x}^{t+1}-\alpha\m{W}^2\m{v}^t-\alpha\m{W}(\m{g}^{t+1}-\m{g}^t)\label{semiATCGT}\\
	 & = 2\m{W}\m{x}^{t+1}-\m{W}^2\m{x}^t-\alpha\m{W}(\m{g}^{t+1}-\m{g}^t).\notag
\end{align}
The {update} \eqref{semiATCGT} can be also obtained by eliminating the tracking variable
$\m{v}$ with $\tilde{\m{A}}=\tilde{\m{C}}=\tilde{\m{D}}=\tilde{\m{W}}$ and $\tilde{\m{B}}=\m{I}_p$. Related methods include SONATA \cite{scutari2019distributed} and NEXT \cite{di2016next}, which are classified as the semi-ATC-GT method. 


In the following, we investigate the relationships of UDOA with some existing decentralized quasi-Newton methods  \cite{10589277,zhang2023variance,gao2022achieving}
by setting  $\m{H}^t$ as certain approximation of the Hessian inverse of the objective function. \\
{\bf (IV)  DQN.} 
By setting $\tilde{\m{A}}=\tilde{\m{C}}=\tilde{\m{D}}=\tilde{\m{W}}$ and $\tilde{\m{B}}=\tilde{\m{W}}^2$ in Algorithm~\ref{alg:Framwork},
we recover the DQN method \cite{10589277}, which has the updates:
\begin{align*} 
	& \m{x}^{t+1}= \m{W}(\m{x}^{t}-\alpha \m{d}^{t}), \qquad 
	 \m{v}^{t+1}= \m{W}(\m{v}^t+(\m{g}^{t+1}-\m{g}^t)),\\
	&\m{d}^{t+1}=\m{W}(\m{H}^{t+1}\m{v}^{t+1}),
\end{align*}
where the $i$-th block of $\m{H}^{t+1}$ is represented as
\begin{align}\label{DQN_BFGS}
	\m{H}_i^{t+1}=\left(\m{I}_p-\frac{\m{s}_i^t (\m{y}_i^t)\tr}{(\m{y}_i^t)\tr\m{s}_i^t}\right)\m{H}_i^{t}\left(\m{I}_p-\frac{\m{y}_i^t (\m{s}_i^t)\tr}{(\m{y}_i^t)\tr\m{s}_i^t}\right)+\frac{\m{s}_i^t(\m{s}_i^t)\tr}{(\m{y}_i^t)\tr\m{s}_i^t}
\end{align}
with $\m{s}_i^t=\m{x}_i^{t+1}-\m{x}_i^t$ and $\m{y}_i^t=\m{v}_i^{t+1}-\m{v}_i^t$. \\
{\bf (V) DR-LM-DFP and D-LM-BFGS.} If we choose $\tilde{\m{A}}=\tilde{\m{C}}=\tilde{\m{W}}$ and $\tilde{\m{B}}=\tilde{\m{D}}=\m{I}_p$ in Algorithm~\ref{alg:Framwork}, 
and let $\m{H}^t$ be generated by some damped regularized limited-memory DFP or damped limited-memory BFGS updates, 
we would get the DR-LM-DFP or D-LM-BFGS methods \cite{zhang2023variance}. \\
{\bf (VI) DGM-BB-C.} By setting $\tilde{\m{A}}=\tilde{\m{B}}=\tilde{\m{C}}=\tilde{\m{D}}=\tilde{\m{W}}^K$ in Algorithm~\ref{alg:Framwork},
 we would obtain DGM-BB-C method \cite{gao2022achieving}, which follows the updates:
\begin{align*} 
	& \m{x}^{t+1}= \m{W}^K(\m{x}^{t}-\alpha\m{H}^t\m{v}^{t}),\\ 
	& \m{v}^{t+1}= \m{W}^K(\m{v}^t+(\m{g}^{t+1}-\m{g}^t)),
\end{align*}
where $\m{H}^{t}$ is a block diagonal matrix with the the $i$-th block being updated by applying the Barzilai-Borwein \cite{barzilai1988two} approximation:
\[
\m{H}_i^{t+1}=\frac{\|\m{s}_i^t\|^2}{(\m{s}_i^t)\tr(\m{g}_i^{t+1}-\m{g}_i^{t})}\m{I}_p  \quad \text{or} \quad \frac{(\m{s}_i^t)\tr(\m{g}_i^{t+1}-\m{g}_i^{t})}{\|\m{g}_i^{t+1}-\m{g}_i^{t}\|^2}\m{I}_p.
\]
{\bf (VII) DSG.} By choosing $\tilde{\m{A}}=\tilde{\m{C}}=\tilde{\m{W}}$ and $\tilde{\m{B}}=\tilde{\m{D}}=\m{I}_p$ in Algorithm~\ref{alg:Framwork},
and setting  $\m{H}^{t}$ as a  diagonal matrix with the $i$-th block $\m{H}_i^{t+1}$=$(\delta_i^{t+1})^{-1} \m{I}_p$, where 
\begin{equation}\label{DSG-update}
	\delta_i^{t+1}=\mbox{Proj}_{[\delta_{\min}, \delta_{\max}]}\left\{\frac{(\m{s}_i^t)\tr(\m{g}_i^{t+1}-\m{g}_i^{t})}{\|\m{s}_i^t\|^2}+\delta_i^{t} \sum_{j \in \mathcal{N}_i}\tilde{W}_{ij}\left(1-\frac{(\m{s}_j^t)\tr\m{s}_i^t}{\|\m{s}_i^t\|^2}\right)\right\},
\end{equation}
and $0<\delta_{\min}< \delta_{\max}<\infty$ are parameters,
we would get the DSG method \cite{jakovetic2019exact}.\\[0.05in]

Comparing  UDOA with other unified methods for decentralized optimization, we have the following remarks.
First, notice that Berahas et al. \cite{berahas2024balancing} proposed a Gradient Tracking Algorithmic (GTA) framework which unifies various gradient tracking methods. 
All the methods in the GTA framework are also included in UDOA. However, the convergence of GTA framework
is only studied for the strongly convex objective function. In contrast, we analyze the convergence of UDOA under the more general nonconvex settings.
Second, a stochastic nonconvex unified framework SUDA \cite{alghunaim2022unified} was also proposed, integrating several well-known decentralized methods within a primal-dual formulation.
 While both SUDA and UDOA encompass gradient tracking methods, a key difference between these two methods lies in the integration of quasi-Newton methods. 
By employing a different analyzing approach from SUDA, we establish the global convergence of UDOA with quasi-Newton updates in the nonconvex settings,  
which remains to be a challenging and much less studied area in the literature. Moreover, we are able to establish a stronger convergence result 
(the entire iterate sequence convergence) of UDOA under the  K{\L} properties of the objective function.
Table \ref{Comparisons} summarizes the comparsions of UDOA with several existing unified frameworks, where GT, ED, and QN 
 represent gradient tracking, exact diffusion, and quasi-Newton methods, respectively.
Note that the ABC framework \cite{xu2021distributed}, while built on a similar primal-dual approach to SUDA\cite{alghunaim2022unified}, 
is only restricted to strongly convex objective functions.

	\begin{table}
	\centering
	\caption{Comparisons with existing frameworks}\label{Comparisons}
	\begin{tabular}{ccccc}
		\hline
		\hline
		Frameworks&GTA\cite{berahas2024balancing} &ABC\cite{xu2021distributed}&SUDA \cite{alghunaim2022unified} &UDOA(Ours)\\
		\hline
		Require convexity?& Yes & Yes& No & No\\
		Include GT? &Yes &Yes&Yes &Yes\\
		Include ED? &No &Yes&Yes &No\\
		Include QN? &No &No&No &Yes\\
		Convergence under K{\L}?&Not known &Not known &Not known &Yes\\
		\hline 
		\hline
	\end{tabular}
\end{table}

\subsection{Global Convergence}
We now analyze global convergence of the proposed UDOA, i.e., Algorithm~\ref{alg:Framwork}, for minimizing \eqref{obj_fun1},
 in which the local objective functions $f_i$, $i=1,\ldots,n$,
are Lipschitz continuously differentiable, but possibly nonconvex. For notation convenience, {by Assumption~\ref{mixing}}, we define 
\begin{align*}
	\sigma_{A}=\rho(\m{A}-\m{M}) \in (0,1) \quad \mbox{and} \quad \sigma_{B}=\rho(\m{B}-\m{M}) \in (0,1],\\
	\sigma_{C}=\rho(\m{C}-\m{M}) \in (0,1) \quad \mbox{and} \quad \sigma_{D}=\rho(\m{D}-\m{M}) \in (0,1].
\end{align*}
To establish a unified convergence analysis for all the algorithms included in the UDOA framework, 
we need to require the matrices sequence $\{\m{H}_i^t\}$ to have uniform positive bounded curvature along the direction $\m{v}_i^t$,
regardless of the specific choice of $\m{H}_i^t$. So, we have the following assumption.
\begin{assum}\label{as3}
{The diagonal blocks $\{\m{H}_i^t \}$ of block diagonal matrices $\{\m{H}^t\}$ satisfy}
	\begin{equation*}
		\psi \|\m{v}_i^t\|^2 \leq (\m{v}_i^t)\tr\m{H}_i^t\m{v}_i^t, ~\text{and}~
		\|\m{H}_i^t\m{v}_i^t\| \leq \Psi \|\m{v}_i^t\|,
	\end{equation*}
	for any $t \geq 0$ and $i \in \{1,\ldots,n\}$, where  $\Psi \geq\psi >  0$. 
\end{assum}
{Although standard techniuqes exist in centralized optimization,}
ensuring a sequence  $\{\m{H}_i^t\}$ satisfying Assumption \ref{as3} while approximating the Hessian inverse of the objective function
is a major challenge in nonconvex decentralized optimization.
We will discuss several practical strategies for constructing such a sequence $\{\m{H}_i^t\}$ in Subsection 2.4, 
which are different from those strategies proposed in  \cite{zhang2023variance}.

Recall the notation $\overline{\nabla} f(\m{x}^t)=\frac{1}{n} \sum_{i=1}^n \nabla f_i(\m{x}_i^t)$. 
Then, we obtain $\m{1}_n\otimes\overline{\nabla} f({\m{x}}^t)=\m{M}{\m{g}}^t$. Note that if $\m{x}^* \in \R^{np}$ satisfies 
\begin{equation}\label{stationarity}
	\|\overline{\nabla} f(\m{x}^*)\|^2+\|\m{x}^*-\m{M}\m{x}^*\|^2=0,
\end{equation}
then we have $\m{x}^*_1= \ldots = \m{x}^*_n =: \m{z}^* $ and $\m{z}^* $ would be a first-order stationary point of  \eqref{obj_fun1}.
Since $\m{M}\m{v}^t=\m{M}\m{g}^t$ for any $t$ by \eqref{vequalg}, if
$\|{\m{v}}^t\|\rightarrow 0$, $\|\m{v}^t-\m{M}\m{v}^t\|\rightarrow 0$, and $\|\m{x}^t-\m{M}\m{x}^t\|\rightarrow 0$ as $t \rightarrow \infty$,
any limit of $\m{x}^t$ is a stationary point of  \eqref{obj_fun1} satisfying \eqref{stationarity}.
Hence, for some  $\epsilon >0$, we say $\m{x}^t$ is an $\epsilon$-stationary point of \eqref{obj_fun1} if
\begin{equation}\label{eps-stationary}
	\|{\m{v}}^t\|^2+\|\m{v}^t-\m{M}\m{v}^t\|^2+\|\m{x}^t-\m{M}\m{x}^t\|^2 \leq \epsilon.
\end{equation}
Given any $\epsilon >0$, to show UDOA will generate a $(\m{x}^t, \m{v}^t)$ satisfying \eqref{eps-stationary},
we define the following potential function 
\begin{equation}\label{potential}
	P(\m{x}^{t},\m{v}^{t})=F(\bar{\m{x}}^t)+\|\m{x}^{t}-\m{M}\m{x}^{t}\|^2+\frac{1-(1+\tau)\sigma_{A}^2}{16(1+1/\eta)L^2\sigma_{D}^2}\|\m{v}^{t}-\m{M}\m{v}^{t}\|^2,
\end{equation}
where $\eta>0$ and $\tau>0$ such that $1-(1+\tau)\sigma_{A}^2>0$ are some constants specified in later analysis.
In \eqref{potential}, $F(\bar{\m{x}}^t)$ represents the objective function value at the average of local approximate solutions, 
while $\|\m{x}^{t}-\m{M}\m{x}^{t}\|^2$ measures the consensus error, that is, the deviation of the local approximate solutions from their global average.
The term $\|\m{v}^{t}-\m{M}\m{v}^{t}\|^2$ captures the gradient tracking error, representing the difference between the average-gradient approximations and the true average gradient.
Since $F$ is lower bounded {by $\underline{F}$}, the potential function $P$ in \eqref{potential} is also bounded from below by $\underline{F}$. 
To show a sufficient descent property of $P(\m{x}^{t},\m{v}^{t})$, we begin with deriving a recursive relationship for $F(\bar{\m{x}}^t)$.
\begin{lemma}
Suppose that Assumptions \ref{as0}, \ref{mixing}, and \ref{as3} hold. Let $\{\m{x}^t\}$ be the sequence generated 
by UDOA, i.e., Algorithm~\ref{alg:Framwork}. For all $t \geq 0$, we have
	\begin{align}\label{term1}
		F(\bar{\m{x}}^{t+1})\leq& F(\bar{\m{x}}^t)-\left(\frac{\alpha\psi}{2 n}-\frac{L\alpha^2 \Psi^2}{2n}\right)\|\m{v}^{t}\|^2\\
&+\frac{L^2\alpha\Psi^2}{\psi n}\|\m{x}^t-\m{M}\m{x}^t\|^2+\frac{\alpha\Psi^2}{\psi n}\|{\m{v}}^t-\m{M}{\m{v}}^t\|^2.\notag
	\end{align} 
\end{lemma}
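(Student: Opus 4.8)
The plan is to reduce the $np$-dimensional recursion to a descent inequality for the averaged iterate $\bar{\m{x}}^t$ and then invoke the descent lemma. First I would left-multiply \eqref{update1} by $\m{M}$. Since $\tilde{\m{A}}$ and $\tilde{\m{B}}$ are polynomials in $\tilde{\m{W}}$ that are doubly stochastic (or the identity) by Assumption~\ref{mixing}, Lemma~\ref{property W} gives $\m{M}\m{W}^p=\m{M}$ and hence $\m{M}\m{A}=\m{M}$ and $\m{M}\m{B}=\m{M}$, so that $\m{M}\m{x}^{t+1}=\m{M}\m{x}^t-\alpha\m{M}\m{H}^t\m{v}^t$. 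Reading off the per-node average from $\m{M}=\frac{1}{n}\m{1}_n\m{1}_n\tr\otimes\m{I}_p$, this is exactly $\bar{\m{x}}^{t+1}=\bar{\m{x}}^t-\frac{\alpha}{n}\sum_{i=1}^n\m{H}_i^t\m{v}_i^t$. Because each $\nabla f_i$ is $L$-Lipschitz (Assumption~\ref{as1}), so is $\nabla F$, and the descent lemma yields
\begin{equation*}
	F(\bar{\m{x}}^{t+1})\le F(\bar{\m{x}}^t)-\frac{\alpha}{n}\sum_{i=1}^n\langle\nabla F(\bar{\m{x}}^t),\m{H}_i^t\m{v}_i^t\rangle+\frac{L\alpha^2}{2}\Big\|\frac{1}{n}\sum_{i=1}^n\m{H}_i^t\m{v}_i^t\Big\|^2 .
\end{equation*}
The remaining work is to bound the last two terms.

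For the quadratic term I would use convexity of $\|\cdot\|^2$ (Jensen) together with the operator bound $\|\m{H}_i^t\m{v}_i^t\|^2\le\Psi^2\|\m{v}_i^t\|^2$ from Assumption~\ref{as3}, giving $\big\|\frac{1}{n}\sum_i\m{H}_i^t\m{v}_i^t\big\|^2\le\frac{1}{n}\sum_i\|\m{H}_i^t\m{v}_i^t\|^2\le\frac{\Psi^2}{n}\|\m{v}^t\|^2$. This directly produces the $\frac{L\alpha^2\Psi^2}{2n}\|\m{v}^t\|^2$ contribution appearing in \eqref{term1}.

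The main obstacle is the linear term. I would split each summand by inserting $\m{v}_i^t$, writing $\langle\nabla F(\bar{\m{x}}^t),\m{H}_i^t\m{v}_i^t\rangle=(\m{v}_i^t)\tr\m{H}_i^t\m{v}_i^t+\langle\nabla F(\bar{\m{x}}^t)-\m{v}_i^t,\m{H}_i^t\m{v}_i^t\rangle$. The first piece is controlled below by the curvature bound $(\m{v}_i^t)\tr\m{H}_i^t\m{v}_i^t\ge\psi\|\m{v}_i^t\|^2$ of Assumption~\ref{as3}, which after summation supplies a negative multiple of $\|\m{v}^t\|^2$. For the mismatch I would decompose $\nabla F(\bar{\m{x}}^t)-\m{v}_i^t=[\nabla F(\bar{\m{x}}^t)-\overline{\nabla}f(\m{x}^t)]+[\overline{\nabla}f(\m{x}^t)-\m{v}_i^t]$. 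The first bracket is a consensus error: since $\nabla F(\bar{\m{x}}^t)-\overline{\nabla}f(\m{x}^t)=\frac{1}{n}\sum_j(\nabla f_j(\bar{\m{x}}^t)-\nabla f_j(\m{x}_j^t))$, Lipschitz continuity and Cauchy--Schwarz bound its norm by $\frac{L}{\sqrt{n}}\|\m{x}^t-\m{M}\m{x}^t\|$. The second bracket is a tracking error: the identity \eqref{vequalg} gives $\overline{\nabla}f(\m{x}^t)=\bar{\m{v}}^t$, so $\overline{\nabla}f(\m{x}^t)-\m{v}_i^t=\bar{\m{v}}^t-\m{v}_i^t$ and $\sum_i\|\bar{\m{v}}^t-\m{v}_i^t\|^2=\|\m{v}^t-\m{M}\m{v}^t\|^2$. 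Bounding the cross term by Cauchy--Schwarz with $\|\m{H}_i^t\m{v}_i^t\|\le\Psi\|\m{v}_i^t\|$ and then applying Young's inequality, I would spend part of the curvature gain $\psi\|\m{v}_i^t\|^2$ to dominate the stray $\|\m{v}_i^t\|$ factor, leaving the net coefficient $-(\frac{\alpha\psi}{2n}-\frac{L\alpha^2\Psi^2}{2n})$ on $\|\m{v}^t\|^2$ and converting the residual into the consensus and tracking error terms of \eqref{term1}. The delicate point--and the step I expect to require the most care--is choosing the Young parameter so that the $\|\m{v}^t\|^2$ coefficient remains of the stated form while the error terms collect cleanly; the identity \eqref{vequalg} is essential here, since it is what lets the gradient-tracking residual be expressed through $\|\m{v}^t-\m{M}\m{v}^t\|$.
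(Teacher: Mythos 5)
Your proposal is correct and follows essentially the same route as the paper: the averaged recursion $\bar{\m{x}}^{t+1}=\bar{\m{x}}^t-\frac{\alpha}{n}\sum_i\m{H}_i^t\m{v}_i^t$ (which the paper obtains via an auxiliary sequence $\tilde{\m{x}}_i^{t+1}=\m{x}_i^t-\alpha\m{H}_i^t\m{v}_i^t$ rather than by left-multiplying with $\m{M}$, a purely cosmetic difference), the descent lemma, the identical three-way split of the inner product into a curvature term, a consensus-error term, and a tracking-error term via \eqref{vequalg}, and Young's inequality with parameters tuned to retain half the curvature gain. The paper's specific choice is $c=d=2\alpha\Psi^2/\psi$, which yields exactly the $-\frac{\alpha\psi}{2n}\|\m{v}^t\|^2$ coefficient you anticipate.
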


\begin{proof}
	By the $L$-Lipschitz continuity of $\nabla F$, we have
	\begin{align}\label{Fbarx}
		F(\bar{\m{x}}^{t+1})\leq F(\bar{\m{x}}^t)+\Big\langle\nabla F(\bar{\m{x}}^t),\bar{\m{x}}^{t+1}-\bar{\m{x}}^{t}\Big\rangle+\frac{L}{2}\|\bar{\m{x}}^{t+1}-\bar{\m{x}}^{t}\|^{2}.
	\end{align}
	Define an auxiliary sequence $\{\tilde{\m{x}}^t\}_{t=0}^T$ such that for each node $i$, $\tilde{\m{x}}_i^{t+1}={\m{x}}_i^{t}-\alpha \m{H}_i^t\m{v}_i^t$ and $\tilde{\m{x}}_i^{0}={\m{x}}_i^{0}$. Since $\m{x}_i^{t+1}= \sum_{j \in \mathcal{N}_i} \tilde{A}_{ij}\m{x}_j^{t}-\alpha \sum_{j \in \mathcal{N}_i} \tilde{B}_{ij}\m{H}_j^t \m{v}_j^{t}$ and both $\tilde{\m{A}}$ and $\tilde{\m{B}}$ are column stochastic, i.e., $\sum_{i=1}^n \tilde{A}_{ij} =\sum_{i=1}^n \tilde{B}_{ij}= 1, ~ \forall~ j=1,\dots,n$, we obtain that $\frac{1}{n}\sum_{i=1}^n \tilde{\m{x}}_i^{t+1}=\frac{1}{n}\sum_{i=1}^n {\m{x}}_i^{t+1}=\bar{\m{x}}^{t+1}$.
	We now decompose the inner product term in \eqref{Fbarx} as follows:
	\begin{align}\label{terms}
		&\Big\langle\nabla F(\bar{\m{x}}^t),\bar{\m{x}}^{t+1}-\bar{\m{x}}^{t}\Big\rangle
		=\frac{1}{n} \sum_{i=1}^n \Big\langle\nabla F(\bar{\m{x}}^t),\tilde{\m{x}}_i^{t+1}-{\m{x}}_i^{t}\Big\rangle \\
		=&\underbrace{\frac{1}{n} \sum_{i=1}^n \Big\langle{\m{v}}_i^t,\tilde{\m{x}}_i^{t+1}-{\m{x}}_i^{t}\Big\rangle}_{\text {term (I)}}
		+\underbrace{\frac{1}{n} \sum_{i=1}^n \Big\langle\nabla F(\bar{\m{x}}^t)-\overline{\nabla} f({\m{x}}^t),\tilde{\m{x}}_i^{t+1}-{\m{x}}_i^{t}\Big\rangle}_{\text {term (II)}}\notag\\
		&+\underbrace{\frac{1}{n} \sum_{i=1}^n \Big\langle\overline{\nabla} f({\m{x}}^t)-{\m{v}}_i^t,\tilde{\m{x}}_i^{t+1}-{\m{x}}_i^{t}\Big\rangle}_{\text {term (III)}} \notag.
	\end{align}
	In the following, we derive an upper bound for each term in \eqref{terms} separately.\\
	\textbf{Term (I):} By Assumption \ref{as3}, we have
	\begin{align}\label{termA}
		\frac{1}{n} \sum_{i=1}^n \Big\langle{\m{v}}_i^t,\tilde{\m{x}}_i^{t+1}-{\m{x}}_i^{t}\Big\rangle \leq -\frac{\alpha \psi}{ n} \|\m{v}^{t}\|^2.
	\end{align}
	\textbf{Term (II):} Using Young's inequality with some $c>0$ and Assumption \ref{as3}, we deduce 
		\begin{align}\label{termB}
		&\frac{1}{n} \sum_{i=1}^n \Big\langle\nabla F(\bar{\m{x}}^t)-\overline{\nabla} f({\m{x}}^t),\tilde{\m{x}}_i^{t+1}-{\m{x}}_i^{t}\Big\rangle\\
		\leq &\frac{c}{2}\|\nabla F(\bar{\m{x}}^t)-\overline{\nabla} f({\m{x}}^t)\|^2+\frac{\alpha^2\Psi^2}{2nc}\|\m{v}^t\|^2
		\leq \frac{L^2c}{2n}\|\m{M}\m{x}^t-\m{x}^t \|^2+\frac{\alpha^2\Psi^2}{2nc}\|\m{v}^t\|^2,\notag
	\end{align}
	where the last inequality follows from the $L$-Lipschitz continuity of $\nabla f_i$ and the relation $\m{1}_n\otimes\bar{\m{x}}^t=\m{M}{\m{x}}^t$.\\
	\textbf{Term (III):} Applying Young's inequality with some $d>0$, Assumption \ref{as3}, and the relation $\m{1}_n\otimes\overline{\nabla} f({\m{x}}^t)=\m{M}{\m{v}}^t$ from \eqref{vequalg}, we derive
	\begin{align}\label{termC}
		&\frac{1}{n} \sum_{i=1}^n \Big\langle\overline{\nabla} f({\m{x}}^t)-{\m{v}}_i^t,\tilde{\m{x}}_i^{t+1}-{\m{x}}_i^{t}\Big\rangle \\
		\leq &\frac{d}{2n}\|\m{1}_n\otimes\overline{\nabla} f({\m{x}}^t)-{\m{v}}^t\|^2+\frac{\alpha^2\Psi^2}{2nd}\|{\m{v}}^{t}\|^2
		\leq  \frac{d}{2n}\|{\m{v}}^t-\m{M}{\m{v}}^t\|^2+\frac{\alpha^2\Psi^2}{2nd}\|{\m{v}}^{t}\|^2.\notag
	\end{align}
	Plugging \eqref{termA}, \eqref{termB} and \eqref{termC} in \eqref{terms} yields
	\begin{align}\label{middle}
		&\Big\langle\nabla F(\bar{\m{x}}^t),\bar{\m{x}}^{t+1}-\bar{\m{x}}^{t}\Big\rangle\\
		\leq &-\left(\frac{\alpha \psi}{ n}-\frac{\alpha^2 \Psi^2}{2n}\left(\frac{1}{c}+\frac{1}{d}\right)\right)\|\m{v}^{t}\|^2
		+\frac{L^2c}{2n}\|\m{x}^t-\m{M}\m{x}^t\|^2+\frac{d}{2n}\|{\m{v}}^t-\m{M}{\m{v}}^t\|^2.\notag
	\end{align}
For the term $\|\bar{\m{x}}^{t+1}-\bar{\m{x}}^{t}\|^{2}$ in \eqref{Fbarx}, it follows from {Assumption \ref{as3}} that 
\begin{align}\label{third}
	\|\bar{\m{x}}^{t+1}-\bar{\m{x}}^{t}\|^{2}=\frac{1}{n}\|\m{M}(\tilde{\m{x}}^{t+1}-{\m{x}}^{t})\|^{2}\leq \frac{ \alpha^2 \Psi^2}{n}\|\m{v}^t\|^2.
\end{align}
	Finally, substituting \eqref{middle} and \eqref{third} into \eqref{Fbarx}, and choosing $c=2\alpha \Psi^2/ \psi$, $d=2\alpha \Psi^2/ \psi$, 
we obtain the desired inequality \eqref{term1}.
\end{proof}

\begin{theorem}\label{nonconvex_convergence}
	Suppose that Assumptions \ref{as0},  \ref{mixing}, and \ref{as3} hold. Let $\{\m{x}^t\}$ be the sequence generated by UDOA, i.e., Algorithm~\ref{alg:Framwork}. 
If
	\begin{align}\label{alpha}
		\alpha \leq \min \left\{\frac{(1-\sigma_{A}^2)(1-\sigma_{C}^2)^2 \psi n}{256L^2\sigma_{D}^2\Psi^2},\frac{(1-{\sigma}_{A}^2)\psi}{(8L+n+32\sigma_{B}^2 n)\Psi^2},\frac{(1-{\sigma}_{A}^2)\psi n}{8L^2\Psi^2}\right\},
	\end{align}
	then the following convergence rate {holds} for UDOA
	\begin{align}\label{qn-converge-rate}
		\min_{0 \le t \le T} \left\{\|\m{v}^t\|^2+\|{\m{v}}^t-\m{M}{\m{v}}^t\|^2+\|\m{x}^{t}-\m{M}\m{x}^{t}\|^2 \right\}
		\leq \frac{P(\m{x}^{0},\m{v}^{0})-P(\m{x}^{T},\m{v}^{T})}{\gamma \min\{\alpha,1\} T},
	\end{align}
	where $P$ is the potential function defined in \eqref{potential} with $\tau=\frac{1-{\sigma}_{A}^2}{2{\sigma}_{A}^2}$ and $\eta=\frac{1-{\sigma}_{C}^2}{2{\sigma}_{C}^2}$, and $\gamma=\min \{a_1,a_2,a_3\}$ with
	\begin{equation} \label{a1a2a3}
		\left\{
		\begin{array}{cl}
			&a_1=\frac{\psi}{2 n}-\frac{(8L+ n+32\sigma_{B}^2 n)\Psi^2}{4n(1-{\sigma}_A^2)}\alpha \geq \frac{\psi}{4 n}>0,\\
			&a_2=\frac{(1-\sigma_{A}^2)(1-\sigma_{C}^2)^2}{128L^2\sigma_{D}^2}-\frac{\Psi^2\alpha}{\psi n} \geq 
\frac{(1-\sigma_{A}^2)(1-\sigma_{C}^2)^2}{256L^2\sigma_{D}^2} > 0, \\
			&a_3=\frac{1-{\sigma}_{A}^2}{4}-\frac{L^2\Psi^2\alpha}{\psi n} \geq \frac{1-{\sigma}_A^2}{8}>0.
		\end{array}  \right.
	\end{equation}
\end{theorem}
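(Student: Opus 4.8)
The plan is to turn the statement into a one-step sufficient-descent inequality for the potential $P$ in \eqref{potential} and then telescope. The objective term $F(\bar{\m{x}}^t)$ is already controlled by \eqref{term1}, so the remaining work is to produce two companion one-step recursions, one for the consensus error $\|\m{x}^t-\m{M}\m{x}^t\|^2$ and one for the tracking error $\|\m{v}^t-\m{M}\m{v}^t\|^2$, and then to combine all three with the weights prescribed by \eqref{potential}.

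First I would derive the consensus-error recursion. Subtracting $\m{M}\m{x}^{t+1}$ from \eqref{update1} and using $\m{M}\m{A}=\m{M}\m{B}=\m{M}$ (a consequence of Assumption~\ref{mixing} and Lemma~\ref{property W}.c, since $\tilde{\m{A}}$ is doubly stochastic and $\tilde{\m{B}}$ is the identity or doubly stochastic) gives
\begin{equation*}
\m{x}^{t+1}-\m{M}\m{x}^{t+1}=(\m{A}-\m{M})(\m{x}^t-\m{M}\m{x}^t)-\alpha(\m{B}-\m{M})\m{H}^t\m{v}^t .
\end{equation*}
Taking $\|\cdot\|^2$, applying Young's inequality with parameter $\tau$, using that $\m{A}-\m{M}$ and $\m{B}-\m{M}$ are symmetric so that their operator norms equal $\sigma_A,\sigma_B$, and invoking $\|\m{H}^t\m{v}^t\|^2\le\Psi^2\|\m{v}^t\|^2$ from Assumption~\ref{as3}, yields a contraction with factor $(1+\tau)\sigma_A^2$ plus an $O(\alpha^2)\|\m{v}^t\|^2$ perturbation. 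For the tracking error I subtract $\m{M}\m{v}^{t+1}$ from \eqref{update2}, use $\m{M}\m{C}=\m{M}\m{D}=\m{M}$ to obtain $\m{v}^{t+1}-\m{M}\m{v}^{t+1}=(\m{C}-\m{M})(\m{v}^t-\m{M}\m{v}^t)+(\m{D}-\m{M})(\m{g}^{t+1}-\m{g}^t)$, and apply Young's inequality with parameter $\eta$ together with the Lipschitz bound $\|\m{g}^{t+1}-\m{g}^t\|\le L\|\m{x}^{t+1}-\m{x}^t\|$ from Assumption~\ref{as1}. This produces a contraction with factor $(1+\eta)\sigma_C^2$ plus a term proportional to $\sigma_D^2L^2\|\m{x}^{t+1}-\m{x}^t\|^2$, which I then expand through \eqref{update1} (and the average estimate \eqref{third}) into $\|\m{x}^t-\m{M}\m{x}^t\|^2$ and $\|\m{v}^t\|^2$ pieces.

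Next I combine the three recursions with weight $1$ on the consensus error and weight $\beta=\frac{1-(1+\tau)\sigma_A^2}{4(1+1/\eta)L^2\sigma_D^2}$ on the tracking error, exactly as in \eqref{potential}. The prescribed values $\tau=\frac{1-\sigma_A^2}{2\sigma_A^2}$ and $\eta=\frac{1-\sigma_C^2}{2\sigma_C^2}$ are chosen precisely so that $(1+\tau)\sigma_A^2=\frac{1+\sigma_A^2}{2}<1$ and $(1+\eta)\sigma_C^2=\frac{1+\sigma_C^2}{2}<1$, which makes both error sequences strictly contract and keeps $\beta>0$. Collecting the coefficients of $\|\m{v}^t\|^2$, $\|\m{v}^t-\m{M}\m{v}^t\|^2$, and $\|\m{x}^t-\m{M}\m{x}^t\|^2$ in $P(\m{x}^{t+1},\m{v}^{t+1})-P(\m{x}^t,\m{v}^t)$ and requiring each to be nonpositive produces exactly the quantities $a_1,a_2,a_3$ of \eqref{a1a2a3}; the three branches of the step-size bound \eqref{alpha} are then the conditions that force $a_1\ge\frac{\psi}{4n}$, $a_2\ge\frac{(1-\sigma_A^2)(1-\sigma_C^2)^2}{64L^2\sigma_D^2}$, and $a_3\ge\frac{1-\sigma_A^2}{2}$, respectively. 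This delivers the descent $P(\m{x}^{t+1},\m{v}^{t+1})\le P(\m{x}^t,\m{v}^t)-\gamma\min\{\alpha,1\}\bigl(\|\m{v}^t\|^2+\|\m{v}^t-\m{M}\m{v}^t\|^2+\|\m{x}^t-\m{M}\m{x}^t\|^2\bigr)$ with $\gamma=\min\{a_1,a_2,a_3\}$. Summing over $t=0,\dots,T$, using that $P$ is bounded below because each $f_i$ is (Assumption~\ref{as0}), and bounding the minimum over $t$ by the telescoped average gives \eqref{qn-converge-rate}.

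The hard part will be the bookkeeping in the combination step, specifically taming the $\|\m{x}^{t+1}-\m{x}^t\|^2$ term that the tracking-error recursion reinjects into the potential. Since this single term feeds back simultaneously into both the consensus error and the gradient magnitude, the contraction margins $1-(1+\tau)\sigma_A^2$ and $1-(1+\eta)\sigma_C^2$ secured above must be large enough to absorb it, and the weight $\beta$ must be tuned so that the reinjected consensus error does not overwhelm the contraction of $\|\m{x}^t-\m{M}\m{x}^t\|^2$. Forcing all three net coefficients to stay nonpositive under a \emph{single} scalar step-size constraint, rather than three mutually incompatible ones, is the delicate point, and it is precisely what dictates the three-way minimum appearing in \eqref{alpha}.
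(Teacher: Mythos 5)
Your proposal is correct and follows essentially the same route as the paper's proof: the same two contraction recursions via Young's inequality with parameters $\tau$ and $\eta$, the same weighting of the tracking error by $\frac{1-(1+\tau)\sigma_A^2}{4(1+1/\eta)L^2\sigma_D^2}$, the same expansion of $\|\m{x}^{t+1}-\m{x}^t\|^2$ into consensus and gradient pieces, and the same telescoping of the resulting descent inequality. The only cosmetic difference is that the paper bounds $\|\m{x}^{t+1}-\m{x}^t\|^2$ via $\m{x}^{t+1}-\m{x}^t=(\m{A}-\m{I})(\m{x}^t-\m{M}\m{x}^t)-\alpha\m{B}\m{H}^t\m{v}^t$ with $\rho(\m{A}-\m{I})<2$ rather than through the average estimate \eqref{third}, which concerns $\bar{\m{x}}^{t+1}-\bar{\m{x}}^t$ and would not by itself control the full difference.
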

\begin{proof}
First, we establish a recursive upper bound for $\|{\m{x}}^t-\m{M}{\m{x}}^t\|^2$. From Step 3 of Algorithm~\ref{alg:Framwork}, we have
\begin{align}\label{term3}
	&\|\m{x}^{t+1}-\m{M}\m{x}^{t+1}\|^2
	=\|\m{A}\m{x}^t-\m{M}\m{x}^t-\alpha\m{B}\m{H}^t\m{v}^t+\alpha\m{M}\m{H}^t\m{v}^t\|^2\\
	\leq &(1+\tau) \|\m{A}\m{x}^t-\m{M}\m{x}^t\|^2+(1+1/\tau)\alpha^2\|(\m{B}-\m{M})\m{H}^t{\m{v}}^t\|^2\notag\\
	\leq&(1+\tau)\sigma_{A}^2 \|\m{x}^{t}-\m{M}\m{x}^{t}\|^2+(1+1/\tau)\alpha^2 \sigma_{B}^2\Psi^2\|{\m{v}}^t\|^2, \notag 
\end{align}
where the first inequality applies Young's inequality with some $\tau>0$ and the second inequality uses {Lemma \ref{property W}} and {Assumption \ref{as3}}.
	
	Next, we establish a recursive upper bound for $\|{\m{v}}^t-\m{M}{\m{v}}^t\|^2$. From the step 4 of Algorithm~\ref{alg:Framwork}, we have 
	\begin{align*}
		&\|{\m{v}}^{t+1}-\m{M}{\m{v}}^{t+1}\|^2
		=\| \m{C}{\m{v}}^{t}+\m{D}\m{g}^{t+1}-\m{D}\m{g}^{t} -\m{M}\m{v}^t-\m{M}\m{g}^{t+1}+\m{M}\m{g}^t\|^2 \\
		\leq & (1+\eta)\|\m{C}{\m{v}}^t-\m{M}{\m{v}}^t\|^2+(1+1/\eta)\|(\m{D}-\m{M})(\m{g}^{t+1}-\m{g}^t)\|^2\\
		\leq & (1+\eta)\sigma_{C}^2\|{\m{v}}^t-\m{M}{\m{v}}^t\|^2+(1+1/\eta)\sigma_{D}^2L^2\|\m{x}^{t+1}-\m{x}^t\|^2,
			\end{align*}
	where the first inequality follows from Young's inequality with some $\eta>0$ and the second inequality uses {Lemma \ref{property W}} and the $L$-Lipschitz continuity of $\nabla f_i$. 
Note the relation that $\m{x}^{t+1}-\m{x}^t=(\m{A}-\m{I})(\m{x}^t-\m{M}\m{x}^t)-\alpha \m{B}\m{H}^t \m{v}^{t}$ and the spectral properties $\rho(\m{A}-\m{I}) < 2$ and $\rho(\m{A}) \leq 1$ from Assumption \ref{mixing} and Lemma \ref{property W}. 
Then, invoking Lemma \ref{important} and Assumption \ref{as3} yields
		\begin{align}
		&\|{\m{v}}^{t+1}-\m{M}{\m{v}}^{t+1}\|^2
		\leq \|{\m{v}}^t-\m{M}{\m{v}}^t\|^2-(1-(1+\eta)\sigma_{C}^2)\|{\m{v}}^t-\m{M}{\m{v}}^t\|^2 \\
		&+8(1+1/\eta)\sigma_{D}^2  L^2\|\m{x}^{t}-\m{M}\m{x}^{t}\|^2+2(1+1/\eta)\sigma_{D}^2L^2\alpha^2 \Psi^2 \|\m{v}^{t}\|^2. \notag
	\end{align}
	Multiplying both sides of the above inequality by $\frac{1-(1+\tau)\sigma_{A}^2}{16(1+1/\eta)L^2\sigma_{D}^2}$, we have
	\begin{align}\label{term2}
	&\frac{1-(1+\tau)\sigma_{A}^2}{16(1+1/\eta)L^2\sigma_{D}^2}\|{\m{v}}^{t+1}-\m{M}{\m{v}}^{t+1}\|^2\leq \frac{1-(1+\tau)\sigma_{A}^2}{16(1+1/\eta)L^2\sigma_{D}^2}\|{\m{v}}^t-\m{M}{\m{v}}^t\|^2\\
	&-\frac{(1-(1+\tau)\sigma_{A}^2)(1-(1+\eta)\sigma_{C}^2)}{16(1+1/\eta)L^2\sigma_{D}^2}\|{\m{v}}^t-\m{M}{\m{v}}^t\|^2 \notag\\
	&+\frac{1-(1+\tau)\sigma_{A}^2}{2}\|\m{x}^{t}-\m{M}\m{x}^{t}\|^2+\frac{(1-(1+\tau)\sigma_{A}^2)\alpha^2 \Psi^2}{8} \|\m{v}^{t}\|^2. \notag
\end{align}	
 Then, adding \eqref{term1}, \eqref{term2}, and \eqref{term3} yields
	\begin{align*}
		&P(\m{x}^{t+1},\m{v}^{t+1})\leq 	P(\m{x}^{t},\m{v}^{t})-\left(\frac{1-(1+\tau)\sigma_{A}^2}{2}-\frac{L^2\alpha\Psi^2}{\psi n}\right)\|\m{x}^{t}-\m{M}\m{x}^{t}\|^2\\
		&-\left(\frac{\alpha \psi}{2 n}-\frac{L\alpha^2 \Psi^2}{2n}-\frac{(1-(1+\tau)\sigma_{A}^2)\alpha^2 \Psi^2}{8}-\left(1+\frac{1}{\tau}\right)\sigma_{B}^2\alpha^2\Psi^2\right)\|\m{v}^t\|^2\\
		&-\left(\frac{(1-(1+\tau)\sigma_{A}^2)(1-(1+\eta)\sigma_{C}^2)}{16(1+1/\eta)L^2\sigma_{D}^2}-\frac{\alpha\Psi^2}{\psi n}\right)\|{\m{v}}^t-\m{M}{\m{v}}^t\|^2\notag.
	\end{align*}
	Setting $\tau=\frac{1-{\sigma}_{A}^2}{2{\sigma}_{A}^2}$ and $\eta=\frac{1-{\sigma}_{C}^2}{2{\sigma}_{C}^2}$, and  
using the fact that $0 < \sigma_{A} < 1$ and $0 < \sigma_{C} < 1$, we derive 
	\begin{align*}\label{P_descent}
		&P(\m{x}^{t+1},\m{v}^{t+1})
		\leq 	P(\m{x}^{t},\m{v}^{t})-\left(\frac{1-{\sigma}_{A}^2}{4}-\frac{L^2\Psi^2\alpha}{\psi n}\right)\|\m{x}^{t}-\m{M}\m{x}^{t}\|^2
		\notag\\
		&-\left(\frac{\psi}{2n}-\frac{L\Psi^2\alpha}{2n}-\frac{(1-\sigma_{A}^2)\Psi^2\alpha}{16}-\frac{(1+{\sigma}_{A}^2){\sigma}_{B}^2\Psi^2\alpha}{1-{\sigma}_{A}^2}\right)\alpha\|\m{v}^t\|^2\notag\\
		&-\left(\frac{(1-\sigma_{A}^2)(1-\sigma_{C}^2)^2}{64(1+\sigma_{C}^2)L^2\sigma_{D}^2}-\frac{\Psi^2\alpha}{\psi  n}\right)\|{\m{v}}^t-\m{M}{\m{v}}^t\|^2\notag\\
	\leq \; & P(\m{x}^{t},\m{v}^{t}) -\left(\frac{1-{\sigma}_{A}^2}{4}-\frac{L^2\Psi^2\alpha}{\psi n}\right)\|\m{x}^{t}-\m{M}\m{x}^{t}\|^2\notag\\
		&-\left(\frac{\psi}{2 n}-\frac{(8L+ n+32\sigma_{B}^2 n)\Psi^2}{4n(1-{\sigma}_A^2)}\alpha\right)\alpha\|\m{v}^t\|^2\notag\\
		&-\left(\frac{(1-\sigma_{A}^2)(1-\sigma_{C}^2)^2}{128L^2\sigma_{D}^2}-\frac{\Psi^2\alpha}{\psi n}\right)\|{\m{v}}^t-\m{M}{\m{v}}^t\|^2.\notag
	\end{align*}
Hence, by the definitions of $a_1$, $a_2$, and $a_3$ in \eqref{a1a2a3}, we obtain
\begin{equation}\label{P-rec}
	P(\m{x}^{t+1},\m{v}^{t+1})- P(\m{x}^{t},\m{v}^{t}) \le -a_1\alpha\|{\m{v}}^t\|^2
	-a_2\|{\m{v}}^t-\m{M}{\m{v}}^t\|^2
	-a_3\|\m{x}^{t}-\m{M}\m{x}^{t}\|^2,
\end{equation}
where by direct calculation we have 
\begin{align*}
	a_1\geq\frac{\psi}{4 n},\quad a_2 \geq \frac{(1-\sigma_{A}^2)(1-\sigma_{C}^2)^2}{256L^2\sigma_{D}^2}, \quad \mbox{and} \quad a_3 \geq \frac{1-{\sigma}_A^2}{8}.
\end{align*}
Summing \eqref{P-rec} over $t=0, \ldots, T$ and dividing both sides by $\gamma \min\{\alpha,1\}$, we get the desired convergence rate \eqref{qn-converge-rate}, where $\gamma=\min \{a_1,a_2,a_3\}$. 
\end{proof}

{By \eqref{rem_eq0} and the definition of the potential function $P$ given in \eqref{potential},
we have the sequence $\{ P(\m{x}^{t},\m{v}^{t}) \}$ is monotonically nonincreasing and hence, the limit
\begin{equation}\label{def-P-infty}
 P^{\infty} : = \lim_{t \to \infty} P(\m{x}^{t},\m{v}^{t}) \ge \underline{F}
\end{equation}
exits, where $\underline{F}$ is a lower bound of function $F$.}
So, given any $\epsilon >0$, by Theorem~\ref{nonconvex_convergence}, we can see that UDOA will take at most 
\[
(P(\m{x}^{0},\m{v}^{0})- P^{\infty}) / (\epsilon \gamma \min\{\alpha,1\})
\]
iterations  to generate an $\epsilon$-stationary point $\m{x}^t$ satisfying \eqref{eps-stationary}, 
where $\gamma$ is the constant given in \eqref{qn-converge-rate}.
Moreover,  by Theorem~\ref{nonconvex_convergence}, the sequences $\{\|\m{v}^t\|^2\}$, $\{\|\m{x}^{t}-\m{M}\m{x}^{t}\|^2\}$ and $\{\|{\m{v}}^t-\m{M}{\m{v}}^t\|^2\}$ are summable.
Hence, as $t$ goes to infinity, we obtain
\begin{equation}\label{conv_three}
	\|\m{v}^t\|, \quad \|\m{x}^{t}-\m{M}\m{x}^{t}\| \quad \mbox{and} \quad \|{\m{v}}^t-\m{M}{\m{v}}^t\| \rightarrow 0.
\end{equation}
Then, it follows from triangle inequality, 
 $\m{1}_n\otimes\overline{\nabla} f({\m{x}}^t)=\m{M}{\m{v}}^t$, the identity 
$\m{1}_n \otimes \nabla F(\bar{\m{x}}^{t}) =\m{1}_n \otimes \nabla F(\bar{\m{x}}^{t})-\m{1}_n \otimes \overline{\nabla} f({\m{x}}^t)+\m{1}_n \otimes \overline{\nabla} f({\m{x}}^t)-\m{v}^t+\m{v}^t$ and  the $L$-Lipschitz continuity of $\nabla f_i$, we can derive 
\begin{equation}\label{rem_eq0}
	\|\nabla F(\bar{\m{x}}^t)\| \rightarrow 0.
\end{equation}
Furthermore, by  \eqref{conv_three}, we can also derive 
\begin{equation}\label{rem_eq}
 	 F(\bar{\m{x}}^t) \rightarrow P^{\infty}.
\end{equation}

\begin{remark}
	{Theorem~\ref{nonconvex_convergence} implies any limit point of $\{\m{x}^t\}$ generated by UDOA would be a stationary point of  the problem \eqref{obj_fun1}.
In fact, if  $\m{x}^{\infty}$ is a limit point, i.e., exists a subsequence, denoted by $\{\m{x}^{t_k}\}_{k \in \mathbb{N}} \rightarrow \m{x}^{\infty}$ 
as $k\rightarrow\infty$. Then, from \eqref{rem_eq0}, we would have $\|\nabla F(\bar{\m{x}}^{t_k})\|+\|\m{x}^{t_k}-\m{M}\m{x}^{t_k}\| \rightarrow 0$ as $k\rightarrow\infty$,
 which shows $\|\nabla F(\bar{\m{x}}^{\infty})\|+\|\m{x}^{\infty}-\m{M}\m{x}^{\infty}\| = 0$ and $\m{x}^{\infty}$ is a stationary point.
Moreover, by \eqref{P-rec} and the definition of the potential function $P$ given in \eqref{potential}, we have
$F(\bar{\m{x}}^t)  \leq P(\m{x}^t,\m{v}^t)  \leq P(\m{x}^{0},\m{v}^{0})$.
Hence, if $F$ is  coercive, i.e.,  $F(\m{z})\rightarrow +\infty$ as $\|\m{z}\| \rightarrow \infty$, 
we have $\{\bar{\m{x}}^t\}$ is bounded, which also implies $\{\m{x}^t\}$ is bounded.
 Hence, in this case,  $\{\m{x}^t\}$ has at least one accumulation point and any accumulation point 
is a stationary point of the problem \eqref{obj_fun1}. 
 }
\end{remark}

Although any limit point of $\{\m{x}^t\}$ is  a stationary point of  the problem \eqref{obj_fun1} by Theorem~\ref{nonconvex_convergence},
the convergence of the entire sequence  $\{\m{x}^t\}$ is not guaranteed. 
We would strengthen the convergence result such that the entire sequence  $\{\m{x}^t\}$ converges under the condition that 
$F$ has the K{\L} property.

\subsection{Convergence under K{\L} Property}
We now show the entire sequence $\{\m{x}^t\}$ converges under the assumption that  $F$ has the K{\L} property,
where the associated convergence rate would naturally emerge.   
\begin{definition}\label{KL1}
	(K{\L} property $\&$ K{\L} function) A proper closed function $h$ is said to have the Kurdyka-{\L}ojasiewicz (K{\L}) property at $\m{z} \in \operatorname{dom} h$ if there exist a neighborhood $\mathcal{N}$ of $\m{z}$, $v \in (0,\infty]$ and a continuous concave function $\phi:[0,v) \rightarrow \mathbb{R}_{\geq 0}$ with $\phi(0)=0$ such that the following holds.
	\begin{itemize}
		\item [1.] $\phi$ is continuously differentiable on $(0, v)$ with $\phi'>0$.
		\item[2.] For all $\tilde{\m{z}} \in \mathcal{N}$ with $0< |h(\tilde{\m{z}})-h(\m{z})| <v$, it holds that
		$$
		\phi'(h(\tilde{\m{z}} )-h(\m{z}))\operatorname{dist}(0,\partial h(\tilde{\m{z}} )) \geq 1.
		$$
	\end{itemize}
	A proper closed function $h$ satisfying the K{\L} property at all points in $\operatorname{dom} h$ is called a K{\L} function.
\end{definition}
Of particular interest in our analysis is the notion of the K{\L} exponent.
\begin{definition}\label{KL2}
	(K{\L} exponent) For a proper closed function $h$ satisfying the K{\L} property at $\m{z} \in \operatorname{dom} h$, if  the desingularizing  function $\phi$ can be chosen as $\phi(s)=c s^{1-\theta}$ for some $c>0$ and $\theta \in [0,1)$, i.e., there exists $\epsilon>0$ and $v \in (0,\infty]$ such that
	\begin{equation}\label{KLeq}
		\operatorname{dist}(0,\partial h(\tilde{\m{z}})) \geq \kappa|h(\tilde{\m{z}})-h(\m{z})|^{\theta}
	\end{equation}
	whenever $\|\tilde{\m{z}}-\m{z}\| \leq \epsilon$ and $0< |h(\tilde{\m{z}})-h(\m{z})| <v$, where $\kappa=\frac{1}{c(1-\theta)}$, then $h$ is said to have the K{\L} property at $\m{z}$ with exponent $\theta$. If $h$ is a K{\L} function and has the same exponent $\theta$ in $\operatorname{dom} h$, then we say that $h$ is
	a K{\L} function with exponent $\theta$.
\end{definition}
\begin{remark}
	{In literature \cite{attouch2010proximal,li2018calculus}, the K{\L} property is typically defined for $h(\tilde{\m{z}}) > h(\m{z})$.
 However, in decentralized optimization, while the potential function  $P(\m{x}^t,\m{v}^t) $ strictly decreases, 
the objective function value $F(\bar{\m{x}}^t)$ may oscillate and could be strictly smaller than its limit $P^\infty$. 
Therefore, our analysis utilizes Definition~\ref{KL2} to accommodate the nonmonotone reduction of $F(\bar{\m{x}}^t)$ in decentralized optimization.
	It is worth noting that this inequality \eqref{KLeq} is naturally satisfied by semi-algebraic functions (which encompass the objective functions evaluated in our numerical experiments) around their critical points.}
\end{remark}

{In this subsection, we also use the following notation to facilitate our analysis:
\begin{align}\label{Newn}
	\m{X}=[\m{x}_{1} \tr ;\m{x}_{2} \tr; \ldots ;\m{x}_{n} \tr ] \in \mathbb{R}^{n\times p},~
	\m{V}=[\m{v}_{1} \tr ;\m{v}_{2} \tr; \ldots ;\m{v}_{n} \tr ]\in \mathbb{R}^{n\times p},~
	\tilde{\m{M}}=\frac{1}{n} \m{1}_n \m{1}_n \tr.
\end{align} 
Furthermore, to evaluate the global objective function using matrix variable, we define the function $G: \mathbb{R}^{n \times p} \rightarrow \mathbb{R}$ as
\begin{equation}\label{def-G}
	G(\m{X}) := F\left(\frac{1}{n}\m{X}\tr\m{1}_n\right) = F(\bar{\m{x}}),
\end{equation} 
and conveniently define $G(\m{X}^t) = F(\bar{\m{x}}^t)$. By the chain rule, the gradient of $G(\m{X})$ with respect to $\m{X}$ is given by $\nabla G(\m{X}) = \frac{1}{n}\m{1}_n \nabla F(\bar{\m{x}})\tr \in \mathbb{R}^{n \times p}$. Moreover, in the following analysis,  we would frequently use the inequality 
\begin{equation*}
	\|\m{N}_1\m{N}_2\| \leq \|\m{N}_1\|_2 \|\m{N}_2\|
\end{equation*}
for any $\m{N}_1 \in \R^{n \times n}$ and $\m{N}_2 \in \R^{n \times p}$, where $\|\cdot\|_2$ is the matrix 2-norm. }

The following lemma shows that the K{\L} property of $F$ naturally transfers to $G$.
\begin{lemma}\label{KL-G}
	Let $\m{X}^o=\m{1}_n(\m{z}^o)\tr$ for $\m{z}^o \in \R^p$. If $F$ has the K{\L} property at $\m{z}^o$ with exponent $\theta_F \in [0,1)$ and parameters $(\kappa_F >0, \epsilon_F>0, v_F>0)$, then $G$ inherits the K{\L} property at ${\m{X}^o}$ with exponent $\theta_G=\theta_F$ and the parameters 
$(\kappa_G=\frac{\kappa_F}{\sqrt{n}},\epsilon_G=\sqrt{n}\epsilon,v_G=v_F)$. 
\end{lemma}
\begin{proof}
	By the K{\L} property of $F$, there exists a neighborhood $V=\{\m{z}\in \mathbb{R}^{p}: \|\m{z}-\m{z}^o\| \leq \epsilon_F\}$ of ${\m{z}^o}$, a constant $\kappa_F>0$, and an exponent $\theta_F \in [0,1)$ such that for all $\m{z} \in V \cap \{\m{z}:0<|F(\m{z}) - F(\m{z}^o)|<v_F\}$, the following inequality holds
	\begin{equation} \label{KL-F}
		\|\nabla F(\m{z}) \| \geq \kappa_F|F(\m{z})-F(\m{z}^o)|^{\theta_F}.
	\end{equation}
	We define a neighborhood $U$ of ${\m{X}^o}$:
	\begin{align*}
		U=\{\m{X}\in \mathbb{R}^{n\times p}: \|\m{X}-{\m{X}^o}\| \leq \epsilon_G\},
	\end{align*}
where  $\epsilon_G=\sqrt{n} \epsilon_F$. 
Then, for any $\m{X} \in U$, we have $\|\bar{\m{x}}-\m{z}^o\| \leq \frac{1}{\sqrt{n}}\|\m{X}-{\m{X}}^o\| \le  \epsilon_F$,
where $\bar{\m{x}}=\frac{1}{n} \m{X}\tr\m{1}_n $.
Therefore, for any $\m{X} \in U \cap \{\m{X}:0<|G(\m{X}) - G(\m{X}^o)|<v_F\}$, we have from \eqref{KL-F},
\eqref{def-G} and $\|\nabla G(\m{X})\|=\frac{1}{\sqrt{n}} \|\nabla F(\bar{\m{x}})\|$ that
	\begin{align*}
		\sqrt{n}\|\nabla G(\m{X}) \| \geq \kappa_F|G(\m{X})-G({\m{X}}^o)|^{\theta_F},
	\end{align*}
	which gives the final K{\L} inequality for $G$:
	\begin{align*}
		\|\nabla G(\m{X}) \| \geq \frac{\kappa_F}{\sqrt{n}}|G(\m{X})-G({\m{X}}^o)|^{\theta_F}.
	\end{align*}
	This indicates $G$ satisfies the K{\L} property at ${\m{X}}^o$ with exponent $\theta_G = \theta_F$ and the parameters $(\kappa_G=\frac{\kappa_F}{\sqrt{n}},\epsilon_G=\sqrt{n}\epsilon_F,v_G=v_F)$. 
\end{proof}
{In the following, let us define}
\begin{align}
	&P^t=P(\m{x}^{t},\m{v}^{t}), \quad \Delta P^t=P^t-P^{\infty} \quad \mbox{and} \label{not1} \\
	& \mathcal{T}^t=\sqrt{\gamma\min\{1,\alpha\}}\left(\|\m{V}^t\|^2+\|{\m{V}}^t-\tilde{\m{M}}{\m{V}}^t\|^2+\|\m{X}^{t}-\tilde{\m{M}}\m{X}^{t}\|^2\right)^{1/2},\label{not2}
\end{align}
where $\gamma$ is the constant from Theorem \ref{nonconvex_convergence} and $P^{\infty}$ is the 
limit of $\{P^t\}$ defined in \eqref{def-P-infty}.
Then, it can be readily verified from \eqref{P-rec} that
\begin{equation}\label{Tt}
	(\mathcal{T}^t)^2 \leq \Delta P^t-\Delta P^{t+1}.
\end{equation}
To prove the convergence of the entire sequence $\{{\m{X}}^t\}$ generated by UDOA, we will show that the distance sequence $\{\|{\m{X}}^{t+1}-{\m{X}}^t\|\}$ is summable, i.e., $\sum_{t=0}^{\infty}\|{\m{X}}^{t+1}-{\m{X}}^t\| < \infty$, {which directly implies the convergence of  $\{{\m{X}}^t\}$.}
The following lemma first reveals that $\|{\m{X}}^{t+1}-{\m{X}}^t\|$ is upper bounded by $\mathcal{T}^t$.
\begin{lemma}\label{distance_control}
	Under the same assumptions and parameter choices of {Theorem \ref{nonconvex_convergence}}, it holds that
	\begin{align}\label{xt+1-xt}
		\|{\m{X}}^{t+1}-{\m{X}}^t\|
		\leq \sqrt{c_1} \mathcal{T}^t,
	\end{align}
where $c_1=\frac{2\max\{\alpha^2\Psi^2,4\}}{\gamma\min\{1,\alpha\}}$.
\end{lemma}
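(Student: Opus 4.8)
The plan is to bound $\|\m{X}^{t+1}-\m{X}^t\|$ directly through the primal update \eqref{update1}. I first observe that the matrix Frobenius norms appearing in $\mathcal{T}^t$ coincide with the Euclidean norms of the corresponding stacked vectors: $\|\m{X}^{t+1}-\m{X}^t\|=\|\m{x}^{t+1}-\m{x}^t\|$, $\|\m{X}^t-\tilde{\m{M}}\m{X}^t\|=\|\m{x}^t-\m{M}\m{x}^t\|$, $\|\m{V}^t\|=\|\m{v}^t\|$, and $\|\m{V}^t-\tilde{\m{M}}\m{V}^t\|=\|\m{v}^t-\m{M}\m{v}^t\|$. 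This reduction lets me carry out every estimate in the stacked variables and translate back through definition \eqref{not2} only at the very end.

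Next I rewrite the increment. From \eqref{update1}, $\m{x}^{t+1}-\m{x}^t=(\m{A}-\m{I})\m{x}^t-\alpha\m{B}\m{H}^t\m{v}^t$. Because $\tilde{\m{A}}$ is doubly stochastic, $(\m{A}-\m{I})\m{M}=\m{0}$, so $(\m{A}-\m{I})\m{x}^t=(\m{A}-\m{I})(\m{x}^t-\m{M}\m{x}^t)$ and hence
\[
\m{x}^{t+1}-\m{x}^t=(\m{A}-\m{I})(\m{x}^t-\m{M}\m{x}^t)-\alpha\m{B}\m{H}^t\m{v}^t,
\]
which is precisely the identity already used inside the proof of Theorem~\ref{nonconvex_convergence}. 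I then apply $\|\m{a}+\m{b}\|^2\le 2\|\m{a}\|^2+2\|\m{b}\|^2$ to separate the consensus term from the step term.

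For the first term, $\m{A}-\m{I}$ is symmetric with $\rho(\m{A}-\m{I})<2$, giving $\|(\m{A}-\m{I})(\m{x}^t-\m{M}\m{x}^t)\|^2\le 4\|\m{x}^t-\m{M}\m{x}^t\|^2$. For the second, Assumption~\ref{mixing} forces $\m{B}$ to be either $\m{I}$ or symmetric doubly stochastic, so $\|\m{B}\|\le 1$; together with Assumption~\ref{as3}, which yields $\|\m{H}^t\m{v}^t\|^2=\sum_{i=1}^n\|\m{H}_i^t\m{v}_i^t\|^2\le\Psi^2\|\m{v}^t\|^2$, this gives $\alpha^2\|\m{B}\m{H}^t\m{v}^t\|^2\le\alpha^2\Psi^2\|\m{v}^t\|^2$. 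Combining the two bounds,
\[
\|\m{X}^{t+1}-\m{X}^t\|^2\le 8\|\m{x}^t-\m{M}\m{x}^t\|^2+2\alpha^2\Psi^2\|\m{v}^t\|^2.
\]
Bounding both coefficients by $2\max\{\alpha^2\Psi^2,4\}$ and enlarging the right-hand side by the nonnegative quantity $2\max\{\alpha^2\Psi^2,4\}\|\m{v}^t-\m{M}\m{v}^t\|^2$, I obtain the bound $2\max\{\alpha^2\Psi^2,4\}\bigl(\|\m{v}^t\|^2+\|\m{v}^t-\m{M}\m{v}^t\|^2+\|\m{x}^t-\m{M}\m{x}^t\|^2\bigr)$, which by \eqref{not2} equals $c_1(\mathcal{T}^t)^2$; taking square roots gives \eqref{xt+1-xt}.

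I do not anticipate a genuine obstacle, since everything reduces to elementary norm inequalities. The one step carrying real content is the rewriting $(\m{A}-\m{I})\m{x}^t=(\m{A}-\m{I})(\m{x}^t-\m{M}\m{x}^t)$ via $(\m{A}-\m{I})\m{M}=\m{0}$: without it the estimate would involve $\|\m{x}^t\|$, which $\mathcal{T}^t$ does not control. The only point requiring mild care is the uniform bound $\|\m{B}\|\le 1$, which rests on the identity-or-doubly-stochastic structure of $\m{B}$ guaranteed by Assumption~\ref{mixing}.
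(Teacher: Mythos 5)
Your proof is correct and follows essentially the same route as the paper's: the same decomposition $\m{x}^{t+1}-\m{x}^t=(\m{A}-\m{I})(\m{x}^t-\m{M}\m{x}^t)-\alpha\m{B}\m{H}^t\m{v}^t$, the same bounds $\rho(\m{A}-\m{I})<2$, $\|\m{B}\|\le 1$, and Assumption~\ref{as3}, leading to $8\|\m{x}^t-\m{M}\m{x}^t\|^2+2\alpha^2\Psi^2\|\m{v}^t\|^2$ and then to $c_1(\mathcal{T}^t)^2$ via the $\gamma\min\{1,\alpha\}$ factor in \eqref{not2}. You even correctly use $\Psi^2$ in $c_1$, matching the paper's proof rather than the $\Psi$ appearing in the lemma's displayed constant, which is evidently a typo.
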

\begin{proof}
	Using the notation in \eqref{Newn}, it is convenient to rewrite the recursion of $\m{X}^t$ as 
	\begin{equation*}
		{\m{X}}^{t+1}=\tilde{\m{A}}{\m{X}}^{t}-\alpha\tilde{\m{B}}\m{D}^t,
	\end{equation*}
where $\m{D}^t=[(\m{d}_{1}^t) \tr;(\m{d}_{2}^t) \tr;\dots;(\m{d}_{n}^t) \tr]$ with $\m{d}_i^t=\m{H}_i^t \m{v}_i^t$, $i=1,\ldots,n$.
Observe the relation that ${\m{X}}^{t+1}-{\m{X}}^t=(\tilde{\m{A}}-\m{I}_n)(\m{X}^t-\tilde{\m{M}}\m{X}^t)-\alpha\tilde{\m{B}}\m{D}^t$ and the spectral properties that $\rho(\tilde{\m{A}}-\m{I}_n) <2$ and $\rho(\tilde{\m{B}}) \leq 1$ {by} Assumption \ref{mixing} and Lemma \ref{property W}.
Then it follows from Lemma \ref{important} and Assumption \ref{as3} that
	\begin{align*}
		\|{\m{X}}^{t+1}-{\m{X}}^t\|
		\leq\sqrt{8\|\m{X}^t-\tilde{\m{M}}\m{X}^t\|^2+2\alpha^2\Psi^2\|\m{V}^t\|^2}
		\leq \sqrt{c_1} \mathcal{T}^t,
	\end{align*}
	where  $c_1=\frac{2\max\{\alpha^2\Psi^2,4\}}{\gamma\min\{1,\alpha\}}$.
\end{proof}
Therefore, by {Lemma \ref{distance_control}}, it suffices to show that the sequence $\{{\mathcal{T}^t}\}$ is summable. 
In the following lemma, we establish a key relationship between $P^{t+1}$ and $G(\m{X}^t)$, and derive an upper bound of $\|\nabla G(\m{X}^{t})\|$ in terms of $\mathcal{T}^t$, 
\begin{lemma}\label{two_ineq}
	Under the same assumptions and parameter choices as in {Theorem \ref{nonconvex_convergence}}, the following two inequalities hold:
\begin{align}\label{DeltaP}
	\Delta P^{t+1}
	\leq G(\m{X}^t)+ c_2 (\mathcal{T}^t)^2 -P^{\infty}
\end{align}
and
	\begin{align}\label{F-P2}
	\|\nabla G(\m{X}^t)\|
	\leq  c_3\mathcal{T}^t,
\end{align}
where $P^{\infty}$ is given in \eqref{rem_eq}, 
$c_2=\max\left\{1,\frac{(1-\sigma_{A}^2)(1-\sigma_{C}^2)}{32(1+\sigma_{C}^2)L^2\sigma_{D}^2}\right\}/(\gamma\min\{1,\alpha\})$, and $c_3=\sqrt{\frac{3\max\{1,L^2\}}{\gamma\min\{1,\alpha\}n^2}}$.
\end{lemma}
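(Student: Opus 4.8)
The plan is to prove the two inequalities \eqref{DeltaP} and \eqref{F-P2} separately, each reducing to the descent estimate \eqref{P-rec} together with the identifications $G(\m{X}^t)=F(\bar{\m{x}}^t)$, $\|\m{V}^t\|=\|\m{v}^t\|$, $\|\m{V}^t-\tilde{\m{M}}\m{V}^t\|=\|\m{v}^t-\m{M}\m{v}^t\|$, and $\|\m{X}^t-\tilde{\m{M}}\m{X}^t\|=\|\m{x}^t-\m{M}\m{x}^t\|$ (the Frobenius norm of the stacked matrices equals the Euclidean norm of the associated $np$-vectors). In particular, from \eqref{not2} I read off the pointwise bounds $\|\m{x}^t-\m{M}\m{x}^t\|^2\le(\mathcal{T}^t)^2/(\gamma\min\{1,\alpha\})$ and $\|\m{v}^t-\m{M}\m{v}^t\|^2\le(\mathcal{T}^t)^2/(\gamma\min\{1,\alpha\})$, which are what convert the error terms in the potential into multiples of $(\mathcal{T}^t)^2$.

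For \eqref{DeltaP}, first I would use the descent property \eqref{P-rec}, which gives $P^{t+1}\le P^t$, hence $\Delta P^{t+1}=P^{t+1}-P^\infty\le P^t-P^\infty$. Then I expand $P^t$ by its definition \eqref{potential}, and using the prescribed values $\tau=\frac{1-\sigma_{A}^2}{2\sigma_{A}^2}$ and $\eta=\frac{1-\sigma_{C}^2}{2\sigma_{C}^2}$ I simplify the coefficient of $\|\m{v}^t-\m{M}\m{v}^t\|^2$: since $1-(1+\tau)\sigma_{A}^2=\tfrac12(1-\sigma_{A}^2)$ and $1+1/\eta=\frac{1+\sigma_{C}^2}{1-\sigma_{C}^2}$, this coefficient equals $\frac{(1-\sigma_{A}^2)(1-\sigma_{C}^2)}{8(1+\sigma_{C}^2)L^2\sigma_{D}^2}$. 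Recognizing $F(\bar{\m{x}}^t)=G(\m{X}^t)$ leaves exactly $\|\m{x}^t-\m{M}\m{x}^t\|^2$ (coefficient $1$) plus that coefficient times $\|\m{v}^t-\m{M}\m{v}^t\|^2$. Bounding both errors by $(\mathcal{T}^t)^2/(\gamma\min\{1,\alpha\})$ and factoring out $\max\{1,\cdot\}$ gives the constant; since the simplified coefficient is half of the one appearing in $c_2$, the stated (slightly looser) $c_2$ is a valid upper bound.

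For \eqref{F-P2}, I would start from the identity $\|\nabla G(\m{X}^t)\|=\frac{1}{\sqrt{n}}\|\nabla F(\bar{\m{x}}^t)\|$ already used in the proof of Lemma \ref{KL-G}. I then split $\nabla F(\bar{\m{x}}^t)=\big(\nabla F(\bar{\m{x}}^t)-\overline{\nabla}f(\m{x}^t)\big)+\overline{\nabla}f(\m{x}^t)$ and invoke $\overline{\nabla}f(\m{x}^t)=\bar{\m{v}}^t$ from \eqref{vequalg}. The first piece is controlled by the $L$-Lipschitz continuity of each $\nabla f_i$ (Assumption \ref{as1}) together with Cauchy--Schwarz, giving $\|\nabla F(\bar{\m{x}}^t)-\overline{\nabla}f(\m{x}^t)\|\le\frac{L}{\sqrt{n}}\|\m{x}^t-\m{M}\m{x}^t\|$, while the second obeys $\|\bar{\m{v}}^t\|\le\frac{1}{\sqrt{n}}\|\m{v}^t\|$. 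Squaring and applying $\|a+b+c\|^2\le 3(\|a\|^2+\|b\|^2+\|c\|^2)$ (the third slot absorbing the nonnegative tracking error $\|\m{v}^t-\m{M}\m{v}^t\|^2$) produces $\|\nabla F(\bar{\m{x}}^t)\|^2\le\frac{3\max\{1,L^2\}}{n}\big(\|\m{v}^t\|^2+\|\m{v}^t-\m{M}\m{v}^t\|^2+\|\m{x}^t-\m{M}\m{x}^t\|^2\big)$; dividing by $n$ and substituting \eqref{not2} identifies the right-hand side as $(c_3\mathcal{T}^t)^2$.

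I do not expect a genuine obstacle here, since both estimates are essentially bookkeeping on top of \eqref{P-rec}, \eqref{vequalg}, and Assumption \ref{as1}. The only points requiring care are the algebraic simplification of the potential-function coefficient under the specific $\tau,\eta$ (and noticing that $c_2$ is stated with an extra factor of $2$, so the inequality holds with room to spare) and the consistent translation between the matrix norms in \eqref{not2} and the vector norms in which \eqref{P-rec} and \eqref{vequalg} are phrased.
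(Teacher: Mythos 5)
Your proposal is correct and follows essentially the same route as the paper: inequality \eqref{DeltaP} comes from the monotonicity of $P$ plus expanding the potential \eqref{potential} under the chosen $\tau,\eta$ and absorbing the consensus and tracking errors into $(\mathcal{T}^t)^2$, and \eqref{F-P2} comes from $\|\nabla G(\m{X}^t)\|=\tfrac{1}{\sqrt{n}}\|\nabla F(\bar{\m{x}}^t)\|$, the Lipschitz bound, the identity $\overline{\nabla}f(\m{x}^t)=\bar{\m{v}}^t$, and Jensen (the paper uses a three-term splitting that produces the $\|\m{V}^t-\tilde{\m{M}}\m{V}^t\|^2$ term directly, whereas you add it as a nonnegative slack, which is equally valid). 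Your observation that the simplified coefficient is $\frac{(1-\sigma_{A}^2)(1-\sigma_{C}^2)}{8(1+\sigma_{C}^2)L^2\sigma_{D}^2}$, half of what appears inside $c_2$, is accurate, so the stated $c_2$ holds with room to spare.
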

\begin{proof}
{By the descent property of $P^{t}$ and the definitions of
$P^t$ and  $\mathcal{T}^t$ in \eqref{not1} and \eqref{not2}, it has } 
\begin{align*}
	&\Delta P^{t+1} \leq P^{t}-P^{\infty}\\
	\leq \; & F((1/n)(\m{X}^t)\tr\m{1}_n)+\|\m{X}^{t}-\tilde{\m{M}}\m{X}^{t}\|^2+\frac{(1-\sigma_{A}^2)(1-\sigma_{C}^2)}{32(1+\sigma_{C}^2)L^2\sigma_{D}^2}\|\m{V}^{t}-\tilde{\m{M}}\m{V}^{t}\|^2-P^{\infty}\\
	\leq \; & G(\m{X}^t)+ c_2 (\mathcal{T}^t)^2 -P^{\infty},
\end{align*}
where $c_2=\max\left\{1,\frac{(1-\sigma_{A}^2)(1-\sigma_{C}^2)}{32(1+\sigma_{C}^2)L^2\sigma_{D}^2}\right\}/(\gamma\min\{1,\alpha\})$. 
So, the inequality \eqref{DeltaP} holds.
	Note  that $\nabla G(\m{X}^t)= (1/n)\m{1}_n \nabla F(\bar{\m{x}}^{t})\tr$ and we have the decomposition that 
	$$
	\m{1}_n\nabla F(\bar{\m{x}}^{t})\tr =\m{1}_n\nabla F(\bar{\m{x}}^{t})\tr-\m{1}_n\overline{\nabla} f({\m{x}}^t)\tr+\m{1}_n\overline{\nabla} f({\m{x}}^t)\tr-\m{V}^t+\m{V}^t.
	$$ 
	Then, by {Lemma \ref{important}}, the $L$-Lipschitz continuity of each $\nabla f_i$, and the relation that $\m{1}_n\overline{\nabla} f({\m{x}}^t)\tr=\tilde{\m{M}}\m{V}^t$, we have
	\begin{align*}
		&\|\nabla G(\m{X}^t)\|^2=\frac{1}{n}\|\nabla F(\bar{\m{x}}^{t})\|^2 \\
		\leq& \frac{3L^2}{n^2}\|\m{X}^t-\tilde{\m{M}}\m{X}^t\|^2+\frac{3}{n^2}\|\m{V}^t-\tilde{\m{M}}\m{V}^t\|^2+\frac{3}{n^2}\|\m{V}^t\|^2
		\leq  (c_3\mathcal{T}^t)^2,
	\end{align*}
	where $c_3=\sqrt{\frac{3\max\{1,L^2\}}{\gamma\min\{1,\alpha\}n^2}}$, which gives the inequality \eqref{F-P2}.
\end{proof}

Assuming that $F$ satisfies the K{\L} property, Lemma~\ref{KL-G} shows that $G$ would inherit this property. 
Then, from inequality~\eqref{F-P2}, we can bound $G(\m{X}^t)-P^{\infty}$ in terms of $\mathcal{T}^t$. 
These results together with the relation given in~\eqref{DeltaP} would lead to an upper bound of 
$\Delta P^{t+1}$ in terms $\mathcal{T}^t$ in the following lemma.
\begin{lemma}\label{lemma 2.8}
	Under the same assumptions and parameter choices as in {Theorem \ref{nonconvex_convergence}}, let $\m{X}^{\infty}=\m{1}_n(\bar{\m{x}}^{\infty})\tr$ be an accumulation point of $\m{X}^t$, where $\bar{\m{x}}^{\infty}$ is some critical point of $F$. Assume $F$ satisfies the K{\L} property at $\bar{\m{x}}^{\infty}$ with exponent $\theta_F=\theta \in [0,1)$ and the  parameters $(\kappa_F=\sqrt{n}\kappa >0,\epsilon_F>0,v_F=1) $. Then, there exists a neighborhood $\mathcal{N}_{\infty}$ of ${\m{X}}^{\infty}$ and {an integer}
 $t_1 \ge 0$ such that for all $t \in \mathcal{S}:=\{t\geq t_1:{\m{X}}^{t} \in \mathcal{N}_{\infty} \} \neq \emptyset$, we have
	$$
	\max\{c_3,1\}\mathcal{T}^t<1 \quad \text{and} \quad |G({\m{X}}^t)-P^{\infty}|<1,
	$$
	and  for $\theta \in (0,1)$ and $G({\m{X}}^t)-P^{\infty}\neq 0$, we have
\begin{equation}\label{DeltaPt+1}
	\Delta P^{t+1} \leq\kappa^{-\frac{1}{\theta}}(c_3\mathcal{T}^t)^{\frac{1}{\theta}}+ c_2 (\mathcal{T}^t)^2,
\end{equation}
	where $c_2=\max\left\{1,\frac{(1-\sigma_{A}^2)(1-\sigma_{C}^2)}{32(1+\sigma_{C}^2)L^2\sigma_{D}^2}\right\}/(\gamma\min\{1,\alpha\})$ and $c_3=\sqrt{\frac{3\max\{1,L^2\}}{\gamma\min\{1,\alpha\}n^2}}$.
\end{lemma}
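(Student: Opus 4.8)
The plan is to derive \eqref{DeltaPt+1} by feeding the gradient bound \eqref{F-P2} into the K{\L} inequality for $G$ and then substituting the resulting estimate on $G(\m{X}^t)-P^{\infty}$ into \eqref{DeltaP}, both of which were established in Lemma~\ref{two_ineq}. The first preparatory step is to identify the reference value of the K{\L} inequality with $P^{\infty}$. Since $\bar{\m{x}}^{t_k}\to\bar{\m{x}}^{\infty}$ along the subsequence defining the accumulation point and $F(\bar{\m{x}}^t)\to P^{\infty}$ by the corollary following Theorem~\ref{nonconvex_convergence}, continuity of $F$ gives $G(\m{X}^{\infty})=F(\bar{\m{x}}^{\infty})=P^{\infty}$. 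By Lemma~\ref{KL-G}, the K{\L} property of $F$ at $\bar{\m{x}}^{\infty}$ with parameters $\{\sqrt{n}\kappa,v=1\}$ transfers to $G$ at $\m{X}^{\infty}$ with parameters $\{\kappa,v=1\}$ and the same exponent $\theta$; that is, on some neighborhood $\mathcal{N}$ of $\m{X}^{\infty}$ one has $\|\nabla G(\m{X})\|\geq\kappa\,|G(\m{X})-P^{\infty}|^{\theta}$ whenever $0<|G(\m{X})-P^{\infty}|<1$.

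Next I would fix the neighborhood $\mathcal{N}_{\infty}$ and the index $t_1$. Theorem~\ref{nonconvex_convergence} makes $\|\m{v}^t\|^2$, $\|\m{v}^t-\m{M}\m{v}^t\|^2$, and $\|\m{x}^t-\m{M}\m{x}^t\|^2$ summable, hence $\mathcal{T}^t\to 0$, so choosing $t_1$ large guarantees $c_3\mathcal{T}^t<1$ for all $t\geq t_1$. Shrinking $\mathcal{N}_{\infty}\subseteq\mathcal{N}$ and invoking the continuity of $G$ together with $G(\m{X}^{\infty})=P^{\infty}$ ensures $|G(\m{X}^t)-P^{\infty}|<1$ whenever $\m{X}^t\in\mathcal{N}_{\infty}$. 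Because $\m{X}^{\infty}$ is an accumulation point, infinitely many iterates enter any neighborhood, so $\mathcal{S}=\{t\geq t_1:\m{X}^t\in\mathcal{N}_{\infty}\}$ is nonempty and each $t\in\mathcal{S}$ meets all the domain requirements of the K{\L} inequality.

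Finally, for $\theta\in(0,1)$ and any $t\in\mathcal{S}$ with $G(\m{X}^t)-P^{\infty}\neq 0$, I would apply the K{\L} inequality at $\m{X}^t$, rearrange it to $|G(\m{X}^t)-P^{\infty}|^{\theta}\leq\kappa^{-1}\|\nabla G(\m{X}^t)\|$, bound the right-hand side by $\kappa^{-1}c_3\mathcal{T}^t$ via \eqref{F-P2}, and raise both sides to the power $1/\theta>1$ (a monotone operation on nonnegative quantities) to obtain $G(\m{X}^t)-P^{\infty}\leq|G(\m{X}^t)-P^{\infty}|\leq\kappa^{-1/\theta}(c_3\mathcal{T}^t)^{1/\theta}$. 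Substituting this bound into \eqref{DeltaP} yields \eqref{DeltaPt+1} directly.

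The bulk of the argument is routine bookkeeping; the one point that requires genuine care is aligning the K{\L} reference value with the potential limit, namely proving $G(\m{X}^{\infty})=P^{\infty}$. This identity is what lets the K{\L} inequality, which is anchored at $G(\m{X}^{\infty})$, control the quantity $G(\m{X}^t)-P^{\infty}$ appearing in \eqref{DeltaP}; without it the two reference levels would differ and the chain from \eqref{F-P2} to \eqref{DeltaPt+1} would not close. A secondary subtlety is verifying that all domain conditions of the K{\L} inequality, namely $\m{X}^t\in\mathcal{N}$ and $0<|G(\m{X}^t)-P^{\infty}|<1$, can be enforced simultaneously on a nonempty index set $\mathcal{S}$, which the accumulation-point structure and the convergence $\mathcal{T}^t\to 0$ together provide.
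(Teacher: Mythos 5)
Your proposal is correct and follows essentially the same route as the paper: transfer the K{\L} property to $G$ via Lemma~\ref{KL-G}, use Theorem~\ref{nonconvex_convergence} to choose $t_1$ so that $c_3\mathcal{T}^t<1$ and $|G(\m{X}^t)-P^{\infty}|<1$, then combine the K{\L} inequality with \eqref{F-P2} to bound $|G(\m{X}^t)-P^{\infty}|$ and substitute into \eqref{DeltaP}. The only cosmetic difference is that you enforce $|G(\m{X}^t)-P^{\infty}|<1$ by shrinking the neighborhood and using continuity of $G$, whereas the paper gets it from $G(\m{X}^t)\to P^{\infty}$ for $t\geq t_1$; both are valid.
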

\begin{proof}
	Since ${\m{X}}^{\infty}$ is an accumulation point of $\{\m{X}^t\}$, we have $P^{\infty}=G(\m{X}^{\infty})$. By the K{\L} assumption on $F$ and {Lemma \ref{KL-G}}, we conclude that $G$ possesses the K{\L} property at ${\m{X}}^{\infty}$ with exponent $\theta_G=\theta \in [0,1)$ and parameters $(\kappa_G=\kappa,\epsilon_G=\sqrt{n}\epsilon_F>0, v_G=1) $. Hence, there exists a neighborhood $\mathcal{N}_{\infty}$ of ${\m{X}}^{\infty}$ such that for all $\m{X} \in \mathcal{N}_{\infty} \cap \{\m{X}:0<|G(\m{X}) - P^{\infty}|<1\}$, we have
	\begin{equation}\label{F-P}
		|G(\m{X})-P^{\infty}|^{\theta} \leq \frac{\|\nabla G(\m{X})\|}{\kappa}.
	\end{equation}
From {Theorem \ref{nonconvex_convergence}}, we know that $\mathcal{T}^t \rightarrow 0$ and $G({\m{X}}^{t}) \rightarrow P^{\infty}$ as $t \to \infty$ . Thus, there exists some integer 
$t_1 \ge 0$ such that for all $ t\geq t_1$, we have
$$
\max\{c_3,1\}\mathcal{T}^t<1 \quad \text{and} \quad |G({\m{X}}^t)-P^{\infty}|<1,
$$
where $c_3=\sqrt{\frac{3\max\{1,L^2\}}{\gamma\min\{1,\alpha\}n^2}}$. 
Define $\mathcal{S}=\{t\geq t_1:{\m{X}}^{t} \in \mathcal{N}_{\infty} \}$, which is nonempty since ${\m{X}}^{\infty}$ is an accumulation point of $\{\m{X}^t\}$. For $\theta \in (0,1)$ and any $t \in \mathcal{S}$ with $G({\m{X}}^t)-P^{\infty}\neq 0$, the inequality \eqref{F-P} implies
\begin{equation}\label{F-P1}
	|G({\m{X}}^{t})-P^{\infty}| \leq \left(\frac{\|\nabla G({\m{X}}^{t})\|}{\kappa}\right)^{1/\theta}.
\end{equation}
 Substituting \eqref{F-P2} into \eqref{F-P1} yields 	\begin{equation}\label{G-P}
	|G({\m{X}}^{t})-P^{\infty} |\leq \kappa^{-\frac{1}{\theta}}(c_3\mathcal{T}^t)^{\frac{1}{\theta}}.
\end{equation}
Combining \eqref{G-P} with \eqref{DeltaP} gives the desired bound \eqref{DeltaPt+1}.
\end{proof}
In the setting of {Lemma \ref{lemma 2.8}}, we may assume, without loss of generality, that $|\mathcal{S}|=\infty$.
Using \eqref{Tt} and {Lemma \ref{important2}}  with $a_1=\Delta P^t$ and $a_2=\Delta P^{t+1}$ gives
\begin{equation}\label{Tt2}
	(\mathcal{T}^t)^2 \leq \Delta P^t-\Delta P^{t+1} \leq  \frac{1}{1-\theta} (\Delta P^t)^{\theta} [\underbrace{(\Delta P^t)^{1-\theta}-(\Delta P^{t+1})^{1-\theta}}_{\Delta P^t_{\theta}}].
\end{equation}
Based on the above preliminaries, we now establish the summability of $\{\mathcal{T}^t\}$.
\begin{theorem}\label{whole-convergence}
	Suppose that Assumptions \ref{as0}, \ref{mixing}, and \ref{as3} hold. Let $\{\m{X}^t\}$ be the sequence generated by UDOA, i.e., Algorithm~\ref{alg:Framwork}, with the stepsize
		\begin{align*}
	\alpha \leq \min \left\{\frac{(1-\sigma_{A}^2)(1-\sigma_{C}^2)^2 \psi n}{256L^2\sigma_{D}^2\Psi^2},\frac{(1-{\sigma}_{A}^2)\psi}{(8L+n+32\sigma_{B}^2 n)\Psi^2},\frac{(1-{\sigma}_{A}^2)\psi n}{8L^2\Psi^2}\right\}.
	\end{align*}
	Let $\m{X}^{\infty}=\m{1}_n(\bar{\m{x}}^{\infty})\tr$ be an accumulation point of $\m{X}^t$, where $\bar{\m{x}}^{\infty}$ is some critical point of $F$. If $F$ satisfies the K{\L} property at $\bar{\m{x}}^{\infty}$ with exponent $\theta_F=\theta \in [0,1)$ and the parameters $(\kappa_F=\sqrt{n}\kappa >0,\epsilon_F>0,v_F=1) $, then $\{\mathcal{T}^t\}$ is summable and the entire sequence $\{{\m{X}}^t\}$ converges.
\end{theorem}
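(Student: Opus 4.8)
The plan is to reduce the whole statement to the summability of $\{\mathcal{T}^t\}$ and then invoke Lemma~\ref{distance_control}. Indeed, \eqref{xt+1-xt} gives $\|\m{X}^{t+1}-\m{X}^t\|\le \sqrt{c_1}\,\mathcal{T}^t$, so once $\sum_t \mathcal{T}^t<\infty$ is established, the tail $\sum_{t\ge m}\|\m{X}^{t+1}-\m{X}^t\|$ vanishes as $m\to\infty$, making $\{\m{X}^t\}$ a Cauchy sequence; its unique limit must then coincide with the accumulation point $\m{X}^\infty$, which yields whole-sequence convergence. For the summability I would rely on the setup already in place: by Lemma~\ref{KL-G} the function $G$ inherits the K{\L} property at $\m{X}^\infty$, while Theorem~\ref{nonconvex_convergence} supplies that $P^t$ is nonincreasing with $\Delta P^t\downarrow 0$, $\Delta P^t\ge 0$, and $\mathcal{T}^t\to 0$. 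The three workhorses are the concave estimate \eqref{Tt2}, the K{\L}-driven bound \eqref{DeltaPt+1} on $\Delta P^{t+1}$ in terms of $\mathcal{T}^t$, and the gradient bound \eqref{F-P2}.

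The core idea is to turn \eqref{Tt2} into a one-step contraction by controlling $(\Delta P^t)^\theta$ through the time-shifted version of \eqref{DeltaPt+1}, namely $\Delta P^t\le \kappa^{-1/\theta}(c_3\mathcal{T}^{t-1})^{1/\theta}+c_2(\mathcal{T}^{t-1})^2$. Since $c_3\mathcal{T}^{t-1}<1$ for $t$ large, exactly one of these two terms dominates according to whether $1/\theta$ exceeds $2$, which motivates splitting the analysis into two regimes of $\theta$.

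For $\theta\in(1/2,1)$ one has $1/\theta<2$, so the $(\mathcal{T}^{t-1})^{1/\theta}$ term dominates and $\Delta P^t\le C(\mathcal{T}^{t-1})^{1/\theta}$, whence $(\Delta P^t)^\theta\le C'\,\mathcal{T}^{t-1}$. Substituting into \eqref{Tt2} gives $(\mathcal{T}^t)^2\le C''\,\mathcal{T}^{t-1}\,\Delta P^t_\theta$, and applying Young's inequality in the form $\sqrt{ab}\le \tfrac12(a+b)$ yields $\mathcal{T}^t\le \tfrac12\mathcal{T}^{t-1}+C'''\,\Delta P^t_\theta$. Summing over $t$, the telescoping sum $\sum_t\Delta P^t_\theta\le (\Delta P^{t_1})^{1-\theta}$ is finite, and absorbing $\tfrac12\sum_t\mathcal{T}^{t-1}$ into the left-hand side bounds the partial sums of $\{\mathcal{T}^t\}$, proving summability. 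For $\theta\in[0,1/2]$ the quadratic term dominates in the shifted \eqref{DeltaPt+1}, so $\Delta P^{t+1}\le C(\mathcal{T}^t)^2$; for $\theta=0$ this follows after first noting that \eqref{F-P2} together with $\mathcal{T}^t\to0$ forces $G(\m{X}^t)=P^\infty$ for large $t$ (otherwise the K{\L} lower bound $\|\nabla G\|\ge\kappa$ is violated), so that \eqref{DeltaP} reduces to $\Delta P^{t+1}\le c_2(\mathcal{T}^t)^2$. Combined with $(\mathcal{T}^t)^2\le \Delta P^t-\Delta P^{t+1}$ from \eqref{Tt}, this produces the linear recursion $\Delta P^{t+1}\le \tfrac{C}{1+C}\Delta P^t$, so $\Delta P^t$ decays geometrically and $\mathcal{T}^t\le (\Delta P^t)^{1/2}$ is summable as a geometric series.

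The hard part will not be the algebra above but guaranteeing that the iterates remain inside the K{\L} neighborhood $\mathcal{N}_\infty$, so that \eqref{DeltaPt+1} and the K{\L} inequality \eqref{F-P} are legitimately applicable for all large $t$ rather than only for $t\in\mathcal{S}$. I would resolve this by the standard trapping induction: choosing $t_1$ so that $\m{X}^{t_1}$ is sufficiently close to $\m{X}^\infty$ and $\Delta P^{t_1}$ is sufficiently small, one uses the running summability bound together with the displacement control $\|\m{X}^{t+1}-\m{X}^t\|\le\sqrt{c_1}\,\mathcal{T}^t$ to show that the accumulated travel $\sum_{s=t_1}^{t}\|\m{X}^{s+1}-\m{X}^s\|$ never exceeds the neighborhood radius, so $\m{X}^{t+1}\in\mathcal{N}_\infty$ as well; this closes the induction and upgrades $\mathcal{S}$ to all indices beyond $t_1$, giving full summability. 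A secondary technical point is the bookkeeping of the index shift between $\Delta P^t$ and $\mathcal{T}^{t-1}$ when summing, which only affects boundary terms and constants and so does not alter the conclusion.
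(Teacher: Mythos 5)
Your proposal follows essentially the same route as the paper's proof: reduction to summability of $\{\mathcal{T}^t\}$ via Lemma~\ref{distance_control}, the case split on the K{\L} exponent with the geometric recursion $\Delta P^{t+1}\le \tfrac{1}{1+\omega}\Delta P^{t}$ for $\theta\in(0,1/2]$ and the Young-inequality telescoping bound $\mathcal{T}^{t+1}\le\tfrac12\mathcal{T}^{t}+O(\Delta P^{t+1}_{\theta})$ for $\theta\in(1/2,1)$, followed by the same trapping induction to keep the iterates inside $\mathcal{N}_{\infty}$. The only cosmetic deviation is your treatment of $\theta=0$ (forcing $G(\m{X}^t)=P^{\infty}$ eventually and reusing the quadratic bound) versus the paper's argument that $\mathcal{S}^o$ is finite; the two are interchangeable.
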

\begin{proof}
	By {Lemma \ref{distance_control}}, $\|{\m{X}}^{t+1}-{\m{X}}^t\|
	\leq \sqrt{c_1} \mathcal{T}^t$. Thus, the summability of 
	$\{\mathcal{T}^t\}$ implies $\{{\m{X}}^t\}$ is a Cauchy sequence and hence convergent.
  First, notice that if  $G({\m{X}}^t) = P^\infty$, \eqref{DeltaP}
   and $\kappa^{-\frac{1}{\theta}}(c_3\mathcal{T}^t)^{\frac{1}{\theta}} >0 $ would also imply 
    \eqref{DeltaPt+1} holds. Hence, the upper bound in \eqref{DeltaPt+1} holds 
 for all $t \in \mathcal{S}$, regardless of whether $G({\m{X}}^t) = P^\infty$ or not. 
	
 In the following, we prove the summability of $\{\mathcal{T}^t\}$ by considering three cases based on the K{\L} exponent $\theta$.\\
	\textbf{Case I}: $\theta \in (0,1/2]$. By Lemma \ref{lemma 2.8}, $c_3\mathcal{T}^t<1$ for $t \in \mathcal{S}$. Since $1/\theta \geq 2$, inequality \eqref{DeltaPt+1} implies
	\begin{equation}\label{DeltaPt2}
		\Delta P^{t+1} \leq c_4 (\mathcal{T}^t)^2,
	\end{equation}
where $c_4=\kappa^{-\frac{1}{\theta}}c_3^2+c_2$. 
Multiplying \eqref{DeltaPt2} by $\omega >0 $ and adding it to \eqref{Tt} gives
\begin{equation*}
	(1+\omega)\Delta P^{t+1} \leq \Delta P^{t}-(1-\omega c_4)(\mathcal{T}^t)^2.
\end{equation*}
Choosing $\omega \in (0,1/c_4)$ ensures that
\begin{equation}\label{DeltaPt3}
	\Delta P^{t+1} \leq \frac{1}{1+\omega} \Delta P^{t} \leq \Delta P^{0} \left(\frac{1}{1+\omega}\right)^{t+1}.
\end{equation}
Then, applying \eqref{DeltaPt3} to \eqref{Tt} yields
\begin{equation}\label{case1}
	\mathcal{T}^t \leq \sqrt{\Delta P^{0}} (\tau_1)^t,
\end{equation}
where $\tau_1=1/\sqrt{1+\omega}$.\\
\textbf{Case II}: $\theta \in (1/2,1)$. From \eqref{DeltaPt+1} and the relation that $\mathcal{T}^t<1$ for $t \in \mathcal{S}$ derived by Lemma \ref{lemma 2.8}, we have
\begin{equation}\label{DeltaPt4}
	\Delta P^{t+1} \leq c_5 (\mathcal{T}^t)^{1/\theta},
\end{equation}
where $c_5=(c_3/\kappa)^{1/\theta}+c_2$. Using \eqref{Tt2} and \eqref{DeltaPt4}, 
we obtain
\begin{align}\label{Tt2+}
	\mathcal{T}^{t+1} \leq \sqrt{\frac{1}{1-\theta} (\Delta P^{t+1})^{\theta} \Delta P^{t+1}_{\theta}}
	\leq \sqrt{\frac{c_5^{\theta} }{1-\theta} \mathcal{T}^t \Delta P^{t+1}_{\theta}}\leq\frac{1}{2}\mathcal{T}^{t}+\frac{c_5^{\theta} }{2-2\theta} \Delta P^{t+1}_{\theta},
\end{align}
where $\Delta P^{t+1}_{\theta}$ is defined in \eqref{Tt2}.
\textbf{Case III}: $\theta=0$. For any ${t} \in  \mathcal{S}$, the K{\L} property of $G$ at ${\m{X}}^{\infty}$ implies
\begin{equation}
	\kappa(G({\m{X}}^{t}) -P^{\infty})^0 \leq \|\nabla G({\m{X}}^{t})\|.
\end{equation}
Combining this with $\|\nabla G({\m{X}}^{t})\|^2 \leq c_3^2(\mathcal{T}^{t})^2$ from \eqref{F-P2} further gives
\begin{equation}\label{Ttkap}
	\mathcal{T}^{t} \geq \frac{\kappa}{c_3}, ~\forall~ {t} \in  \mathcal{S}^o := \mathcal{S} \cap \{t: G({\m{X}}^t) - P^\infty \neq 0\}.
\end{equation}
{By \eqref{Tt} and the fact that $\Delta P^{t}$ converges to $0$ monotonically, we must have $\mathcal{T}^{t} \rightarrow 0$. Then, \eqref{Ttkap} implies $\mathcal{S}^o$ is an empty or a finite set. 
Thus, there exists some $t_2 \in \mathcal{S}$ such that 
\begin{equation*}
	G({\m{X}}^t) = P^\infty, ~\forall~ t \geq t_2.
\end{equation*}
Then, \eqref{DeltaP} directly yields $\Delta P^{t+1} \leq c_2(\mathcal{T}^t)^2$. 
Hence, similar to \textbf{Case I}, we can obtain
\begin{equation}\label{case3.2}
	\mathcal{T}^t \leq \sqrt{\Delta P^{0}} (\tau_2)^t,
\end{equation}
where $\tau_2=1/\sqrt{1+ \tilde{\omega}}$ and $\tilde{\omega} \in (0,1/c_2)$.}


Now, let $t_4>t_3$ such that $\tilde{\mathcal{S}}:=\{t_3,t_3+1,\ldots,t_4-1\} \subset \mathcal{S}$. 
Under Case II, it follows from the monotonic decrease of $\Delta P^t$ and \eqref{Tt2+} that
\begin{align*}
	2\sum_{t=t_3}^{t_4-1}\mathcal{T}^{t+1} \leq \sum_{t=t_3}^{t_4-1}\mathcal{T}^{t} + \frac{c_5^{\theta}}{1-\theta} \sum_{t=t_3}^{t_4-1} \left( (\Delta P^{t+1})^{1-\theta} - (\Delta P^{t+2})^{1-\theta} \right).
\end{align*}
By splitting the sum $\sum_{t=t_3}^{t_4-1}\mathcal{T}^{t} = \mathcal{T}^{t_3} + \sum_{t=t_3}^{t_4-2}\mathcal{T}^{t+1}$ and rearranging the terms, we deduce
\begin{equation*}
	\sum_{t=t_3}^{t_4-1}\mathcal{T}^{t+1} \leq \mathcal{T}^{t_3} - \mathcal{T}^{t_4} + \frac{c_5^{\theta}}{1-\theta}(\Delta P^{t_3+1})^{1-\theta}.
\end{equation*}
Then, for all the above three cases, using the above relation, and summing \eqref{case1} and \eqref{case3.2},
over $\tilde{\mathcal{S}}$, we can derive
\begin{equation}\label{sumt3}
	\sum_{t=t_3}^{t_4-1}\mathcal{T}^{t} \leq \max\left\{\frac{\sqrt{\Delta P^{0}}\tau_1^{t_3}}{1-\tau_1},2\mathcal{T}^{t_3}+\frac{c_5^{\theta} }{1-\theta}( \Delta P^{t_3})^{1-\theta}, \frac{\sqrt{\Delta P^{0}}\tau_2^{t_3}}{1-\tau_1}\right\},
\end{equation}
where $\tau_1 \in (0,1)$ and $\tau_2 \in (0,1)$,

Let $r$ be sufficiently small such that
$$
\mathcal{B}_r({\m{X}}^{\infty}):=\{{\m{X}}:\|{\m{X}}-{\m{X}}^{\infty}\|\leq r\} \subseteq \mathcal{N}_{\infty}.
$$
By {Theorem \ref{nonconvex_convergence}} and the fact that  $\m{1}_n(\bar{\m{x}}^{\infty})\tr=\m{X}^{\infty}$  is an accumulation point of $\m{X}^t$, there exists $ t_3' \in \mathcal{S}$ such that
\begin{align*}
	&\|{\m{X}}^{t_3'}-{\m{X}}^{\infty}\| < \frac{r}{2},~\text{and}\\ &\sqrt{c_1}\max\left\{\frac{\sqrt{\Delta P^{0}}\tau_1^{t_3'}}{1-\tau_1},2\mathcal{T}^{t_3'}+\frac{c_5^{\theta} }{1-\theta}( \Delta P^{t_3'})^{1-\theta},\frac{\sqrt{\Delta P^{0}}\tau_2^{t_3'}}{1-\tau_2}\right\}<\frac{r}{2}.
\end{align*}
Setting $t_3=t_3'$ in \eqref{sumt3} gives ${\m{X}}^{t_3} \in \mathcal{B}_r({\m{X}}^{\infty})$ and 
\begin{equation}\label{sumt}
		\sum_{t=t_3}^{t_4-1}\mathcal{T}^{t} \leq \frac{r}{2\sqrt{c_1}}.
\end{equation}
Next, we prove by contradiction that ${\m{X}}^{t} \in \mathcal{B}_r({\m{X}}^{\infty})\subseteq \mathcal{N}_{\infty}$ for all $t\geq t_3$. Suppose that there exists $t_4'>t_3$ (the smallest such index) such that $\|{\m{X}}^{t_4'}-{\m{X}}^{\infty}\|\geq r$. Then for $t\in \{t_3,\ldots,t_4'-1\} \subset \mathcal{S}$, setting $t_4=t_4'$  in \eqref{sumt3} yields
$$
\|{\m{X}}^{t_4}-{\m{X}}^{\infty}\| \leq \|{\m{X}}^{t_3}-{\m{X}}^{\infty}\|+\sum_{t=t_3}^{t_4-1}\|{\m{X}}^{t+1}-{\m{X}}^{t}\|<\frac{r}{2}+\sqrt{c_1}\sum_{t=t_3}^{t_4-1}\mathcal{T}^t<r,
$$
where the second and last inequalities follow respectively from \eqref{xt+1-xt} and \eqref{sumt}. This contradicts the assumption, so ${\m{X}}^{t} \in \mathcal{B}_r({\m{X}}^{\infty})$ for all $t \geq t_3$. Taking the limit as $t_4 \rightarrow \infty$ on both sides of \eqref{sumt} yields the summability of $\{\mathcal{T}^t\}$, and consequently that of $\{\|\m{X}^{t+1}-\m{X}^t\|\}$. Hence, $\{\m{X}^t\}$ is a Cauchy sequence and therefore is convergent.
\end{proof}
The following convergence rate results are immediate consequences of Theorem \ref{whole-convergence},
which can be directly derived using the standard techniques given in \cite{attouch2009convergence}.
\begin{corollary}
	Under the setting of Theorem~\ref{whole-convergence}, the following convergence rates hold:
	\begin{itemize}
		\item[1.] When $\theta \in (1/2,1)$, there exists some $d_1>0$ such that
		$$
		\|{\m{X}}^{t}-{\m{X}}^{\infty}\| \leq d_1 t^{-\frac{1-\theta}{2\theta-1}}, ~\forall~ t \geq 0.
		$$
		\item [2.] When $\theta \in (0,1/2]$, there exists some $d_2>0$ such that
		$$
		\|{\m{X}}^{t}-{\m{X}}^{\infty}\|\leq d_2(\tau_1)^{t} ,~\forall~ t \geq 0,
		$$
		where $\tau_1=1/\sqrt{{1+\omega}}$, $\omega \in (0,1/c_4)$, and
		$$
		c_4=\frac{3\kappa^{-\frac{1}{\theta}}\max\{1,L^2\}+n^2\max\left\{1,\frac{(1-\sigma_{A}^2)(1-\sigma_{C}^2)}{32(1+\sigma_{C}^2)L^2\sigma_{D}^2}\right\}}{n^2\gamma\min\{1,\alpha\}}.
		$$
		\item[3.] {When $\theta=0$, there exists some $d_3>0$ such that
		$$
		\|{\m{X}}^{t}-{\m{X}}^{\infty}\|\leq d_3(\tau_2)^{t} ,~\forall~ t \geq 0,
		$$
		where $\tau_2=1/\sqrt{1+\tilde{\omega}}$, $\tilde{\omega}\in (0,1/c_2)$, and
		$$
		c_2=\max\left\{1,\frac{(1-\sigma_{A}^2)(1-\sigma_{C}^2)}{32(1+\sigma_{C}^2)L^2\sigma_{D}^2}\right\}/(\gamma\min\{1,\alpha\}).
		$$}
		
	\end{itemize}
\end{corollary}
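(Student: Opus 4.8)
The plan is to convert the summability of $\{\mathcal{T}^t\}$ established in Theorem~\ref{whole-convergence} into quantitative rates, handling the three regimes of the K{\L} exponent $\theta$ separately. The common starting point is the telescoping bound
\[
\|{\m{X}}^t-{\m{X}}^{\infty}\| \leq \sum_{s=t}^{\infty}\|{\m{X}}^{s+1}-{\m{X}}^s\| \leq \sqrt{c_1}\sum_{s=t}^{\infty}\mathcal{T}^s =: \sqrt{c_1}\,S_t,
\]
which follows from Lemma~\ref{distance_control} together with the summability of $\{\mathcal{T}^t\}$; here $S_t$ is the tail sum. It thus suffices to estimate the decay of $S_t$ in each case.

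For $\theta=0$ (item~3), I would use \eqref{case3.2}, which gives $\mathcal{T}^t=0$ for all $t\geq t_2$, so $S_t=0$ and hence ${\m{X}}^t={\m{X}}^{\infty}$ for $t\geq t_2$, i.e.\ finite termination. For $\theta\in(0,1/2]$ (item~2), the geometric estimate \eqref{case1}, $\mathcal{T}^t\leq\sqrt{\Delta P^0}\,(\tau_1)^t$, summed as a geometric tail yields $S_t\leq\frac{\sqrt{\Delta P^0}}{1-\tau_1}(\tau_1)^t$; multiplying by $\sqrt{c_1}$ gives the stated linear rate with $d_2=\sqrt{c_1\Delta P^0}/(1-\tau_1)$.

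The substantive case is $\theta\in(1/2,1)$ (item~1), which I would treat in two stages. First, I would obtain a polynomial decay of the potential gap. Combining \eqref{Tt}, $(\mathcal{T}^t)^2\leq\Delta P^t-\Delta P^{t+1}$, with \eqref{DeltaPt4}, $\Delta P^{t+1}\leq c_5(\mathcal{T}^t)^{1/\theta}$, eliminates $\mathcal{T}^t$ and produces the self-referential recursion
\[
c_5^{-2\theta}(\Delta P^{t+1})^{2\theta}\leq\Delta P^t-\Delta P^{t+1},
\]
with $2\theta\in(1,2)$. The standard sequence lemma from \cite{attouch2009convergence}, applied to recursions of the form $a_{t+1}^{\beta}\leq C(a_t-a_{t+1})$ with $\beta>1$, then yields $\Delta P^t=O\!\left(t^{-1/(2\theta-1)}\right)$. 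Second, I would transfer this function-value rate to the iterates: summing the recursion \eqref{Tt2+} over $s\geq t$, the increments telescope (up to an index shift, using monotonicity of $\Delta P^t$) to $(\Delta P^t)^{1-\theta}$, giving $S_{t+1}\leq\frac12 S_t+\frac{c_5^{\theta}}{2-2\theta}(\Delta P^t)^{1-\theta}$; since $S_{t+1}=S_t-\mathcal{T}^t$, this rearranges to $S_t\leq 2\mathcal{T}^t+\frac{c_5^{\theta}}{1-\theta}(\Delta P^t)^{1-\theta}$. Because $\mathcal{T}^t\leq(\Delta P^t-\Delta P^{t+1})^{1/2}\leq(\Delta P^t)^{1/2}$ and $1-\theta<1/2$, the term $(\Delta P^t)^{1-\theta}$ dominates once $\Delta P^t<1$, so $S_t=O\!\left((\Delta P^t)^{1-\theta}\right)=O\!\left(t^{-(1-\theta)/(2\theta-1)}\right)$; multiplying by $\sqrt{c_1}$ gives the asserted rate.

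The main obstacle is the case $\theta\in(1/2,1)$. One must correctly extract the self-referential recursion on $\Delta P^t$, invoke the Attouch--Bolte sequence lemma to get the sublinear function-value rate, and then convert it into the iterate rate by controlling the tail sum $S_t$. The delicate point in this last step is the comparison $\mathcal{T}^t\leq(\Delta P^t)^{1/2}\leq(\Delta P^t)^{1-\theta}$, which holds only for large $t$ (once $\Delta P^t<1$) and relies precisely on $1-\theta<1/2$. Finally, since each asserted bound is initially valid only for $t$ beyond some index, one extends it to all $t\geq0$ by enlarging the constants $d_1,d_2$ to absorb the finitely many early terms, where $\|{\m{X}}^t-{\m{X}}^{\infty}\|$ is trivially bounded.
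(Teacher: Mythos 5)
Your proposal is correct and follows exactly the route the paper intends: the paper omits an explicit proof, referring only to the techniques of Attouch--Bolte, and your argument is a faithful instantiation of those techniques using the paper's own ingredients (the telescoping bound from Lemma~\ref{distance_control}, the geometric estimate \eqref{case1}, the finite-termination conclusion \eqref{case3.2}, the recursion \eqref{Tt2+}, and Lemma~\ref{important3} applied to the self-referential inequality obtained from \eqref{Tt} and \eqref{DeltaPt4}). The handling of the delicate points --- the exponent bookkeeping in the sequence lemma, the comparison $(\Delta P^t)^{1/2}\leq(\Delta P^t)^{1-\theta}$ for large $t$, and absorbing the finitely many early indices into the constants --- is all sound.
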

\begin{remark}
	{In the centralized setting, if the objective function satisfies the K{\L} property with exponent $\theta=0$, descent algorithms typically can achieve the convergence in finite number of iterations  \cite{attouch2010proximal}. However, in the decentralized setting, although the objective value $G(\m{X}^t)$ stabilizes at its limit $P^\infty$ after a finite number of steps (for all $t \geq t_2$ in the above proof), the consensus error and the gradient tracking error can only decay linearly due to the spectral properties of the mixing matrix, as established in Lemma~\ref{property W}. Consequently, the combined error metric $\mathcal{T}^t$ and the entire sequence $\{\m{X}^t\}$ can only have a linear convergence rate instead of having
convergence in finite iterations.}
\end{remark}

\subsection{Construction of Hessian Inverse Approximations}
We begin with briefly reviewing how the previously developed decentralized quasi-Newton methods construct bounded approximations of the Hessian inverse. 
The DQN method \cite{shorinwa2024distributed} simply assumes its Hessian inverse approximations generated by \eqref{DQN_BFGS} are uniformly positive definite. 
To guarantee global convergence in the nonconvex optimization setting, DGM-BB-C \cite{gao2022achieving} and DSG \cite{jakovetic2019exact} apply projection
to ensure positive definiteness of the Hessian inverse approximations (see \eqref{DSG-update}).
While the regularization and damping techniques in \cite{zhang2023variance}  were proposed in the convex optimization setting,
these techniques could be used for nonconvex optimization as well.
We now propose several new approaches to construct bounded Hessian inverse approximations  $\hat{\m{H}}_i^t$, specifically designed for nonconvex optimization frameworks. 
Below, we denote  $\m{s}_i^{t}=\m{x}_i^{t+1}-\m{x}_i^t$ and $\check{\m{y}}_i^{t}=\m{v}_i^{t+1}-\m{v}_i^t$,
and and let $0 < l \ll u < +\infty$ be two constants. 

\subsubsection{Memoryless quasi-Newton updates}

Because of low computational cost and easy implementation, memoryless quasi-Newton methods are widely used in traditional nonlinear optimization.
Here, we propose two practical memoryless quasi-Newton updating schemes for decentralized optimization. 
\paragraph{\it Memoryless SR1 update}
The memoryless SR1 method to directly approximate the Hessian inverse can be obtained by
\begin{equation}\label{MSR1}
	\hat{\m{H}}_i^{t+1}=\m{I}_p+\frac{(\m{s}_i^{t}-\check{\m{y}}_i^{t})(\m{s}_i^{t}-\check{\m{y}}_i^{t})\tr}{(\m{s}_i^{t}-\check{\m{y}}_i^{t})\tr \check{\m{y}}_i^{t}}.
\end{equation}
However, the main drawback of \eqref{MSR1} is that $\hat{\m{H}}_i^{t+1}$ cannot be maintained positive definite during
the iterations. Thus, with a necessary safeguard, \eqref{MSR1} can be modified to
\begin{align}\label{MSR1+}
		\m{H}_i^{t+1}=\left\{\begin{array}{cl}
	\m{I}_p+\frac{(\m{s}_i^{t}-\check{\m{y}}_i^{t})(\m{s}_i^{t}-\check{\m{y}}_i^{t})\tr}{(\m{s}_i^{t}-\check{\m{y}}_i^{t})\tr \check{\m{y}}_i^{t}}, &\text {if } [\lambda_{\min}(\hat{\m{H}}_i^{t+1}),\lambda_{\max}(\hat{\m{H}}_i^{t+1})]\subset  [l,u] \\ 
	&\quad \text{and } (\m{s}_i^{t}-\check{\m{y}}_i^{t})\tr \check{\m{y}}_i^{t} \neq 0; \\
		\m{I}_p,  &\text{otherwise. } 
	\end{array}\right.
\end{align}
As a rank-one perturbation on the identity matrix, the only eigenvalue of $\hat{\m{H}}_i^{t+1}$ other than one is $1+\frac{\|\m{s}_i^{t}-\check{\m{y}}_i^{t}\|^2}{(\m{s}_i^{t}-\check{\m{y}}_i^{t})\tr \check{\m{y}}_i^{t}}$. Thus, we can easily compute and check if the eigenvalues of $\hat{\m{H}}_i^{t+1}$ belong to the interval $[l,u]$.
From \eqref{MSR1+}, when $[\lambda_{\min}(\hat{\m{H}}_i^{t+1}),\lambda_{\max}(\hat{\m{H}}_i^{t+1})]\subset  [l,u]$ and $(\m{s}_i^{t}-\check{\m{y}}_i^{t})\tr \check{\m{y}}_i^{t} \neq 0$, 
the search direction  $\m{d}_i^{t+1}=-\m{H}_i^{t+1}\m{v}_i^{t+1}$ can be  written as
\begin{equation}\label{SR1-d}
	\m{d}_i^{t+1}=-\m{v}_i^{t+1}-\frac{(\m{s}_i^{t}-\check{\m{y}}_i^{t})\tr\m{v}_i^{t+1}}{(\m{s}_i^{t}-\check{\m{y}}_i^{t})\tr \check{\m{y}}_i^{t}}(\m{s}_i^{t}-\check{\m{y}}_i^{t}).
\end{equation}
Hence, it is very cheap for updating the search directions, since only three vector-to-vector inner products 
 $\|\m{s}_i^{t}-\check{\m{y}}_i^{t}\|^2$, $(\m{s}_i^{t}-\check{\m{y}}_i^{t})\tr\m{v}_i^{t+1}$ and $(\m{s}_i^{t}-\check{\m{y}}_i^{t})\tr \check{\m{y}}_i^{t}$
are needed in each iteration. 
\paragraph{\it Memoryless BFGS update} 
Defining the function $H_i^t: \Omega_i^t  \rightarrow \R^{p \times p}$ as
\begin{align}\label{H0}
	H_i^t(\m{y})=\tau_i^t\left(\m{I}_p-\frac{\m{s}_i^t(\m{y})\tr+\m{y}(\m{s}_i^t)\tr}{(\m{s}_i^t)\tr\m{y}}\right)
	+\left(1+\frac{\tau_i^t\|\m{y}\|^2}{(\m{s}_i^{t})\tr\m{y}}\right)\frac{\m{s}_i^t(\m{s}_i^t)\tr}{(\m{s}_i^{t})\tr\m{y}},
\end{align}
where $\Omega_i^t = \left\{\m{y} \in \R^{p}: (\m{s}_i^{t})\tr\m{y} >0 \right\}$, and letting
\begin{equation}\label{checks}
	\m{y}_i^t=\left\{\begin{array}{cl}
		\check{\m{y}}_i^t, & \text { if } [\lambda_{\min } (H_i^t(\check{\m{y}}_i^t)),\lambda_{\max } (H_i^t(\check{\m{y}}_i^t))] \subset  [l,u] \text{ and } (\m{s}_i^t)\tr\check{\m{y}}_i^t > 0 ; \\
		\hat{\m{y}}_i^t, & \text { otherwise, } 
	\end{array}\right.
\end{equation}
where $\hat{\m{y}}_i^{t}=\m{g}_i^{t+1}-\m{g}_i^t+h_i^t\m{s}_i^t$, $h_i^t=\varrho+\max\left\{-\frac{(\m{s}_i^t)\tr(\m{g}_i^{t+1}-\m{g}_i^t)}{\|\m{s}_i^t\|^2},0\right\}$
with $\varrho>0$ being a small constant, we propose a memoryless BFGS update as
\begin{align}\label{H1}
	\m{H}_i^{t+1}&=H_i^t(\m{y}_i^t)=\tau_i^t\left(\m{I}_p-\frac{\m{s}_i^t(\m{y}_i^t)\tr+\m{y}_i^t(\m{s}_i^t)\tr}{(\m{s}_i^t)\tr\m{y}_i^t}\right)
	+\left(1+\frac{\tau_i^t\|\m{y}_i^t\|^2}{(\m{s}_i^{t})\tr\m{y}_i^t}\right)\frac{\m{s}_i^t(\m{s}_i^t)\tr}{(\m{s}_i^{t})\tr\m{y}_i^t}\\
	&=\frac{(\m{s}_i^t)\tr \m{y}_i^t}{\|\m{y}_i^t\|^2} \m{I}_p - 
	\frac{\m{s}_i^t(\m{y}_i^t)\tr+\m{y}_i^t(\m{s}_i^t)\tr}{\|\m{y}_i^t\|^2} + 
	2 \frac{\m{s}_i^t(\m{s}_i^t)\tr}{(\m{s}_i^{t})\tr\m{y}_i^t},\notag
\end{align}
where $\tau_i^t = \frac{(\m{s}_i^t)\tr \m{y}_i^t}{\|\m{y}_i^t\|^2}$.

The adaptive selection mechanism in \eqref{checks} is to ensure the quasi-Newton matrix \eqref{H1} 
has strictly positive and bounded eigenvalues. 
Since $\m{v}_i^t$ is generated by the gradient tracking technique in Step 4 of Algorithm~\ref{alg:Framwork},
it captures some information of the average gradients on different nodes.
So, we prefer using $\m{v}_i^t$ to generate the quasi-Newton matrix by $H_i^t(\check{\m{y}}_i^t)$
whenever possible, where $\check{\m{y}}_i^{t}=\m{v}_i^{t+1}-\m{v}_i^t$.
However, $H_i^t(\check{\m{y}}_i^t)$ is not necessarily positive definite and bounded. 
So, in \eqref{checks} we simply check if the smallest and largest eigenvalues of the quasi-Newton
matrix $H_i^t(\check{\m{y}}_i^t)$ belong to the interval $[l,u]$, where 
$0 < l \ll u$ are two parameters. If not, we would use the alternative local gradient difference $\m{g}_i^{t+1}-\m{g}_i^t$ with correction $h_i^t\m{s}_i^t$ to 
update the quasi-Newton matrix by $H_i^t(\hat{\m{y}}_i^t)$, where
$\hat{\m{y}}_i^{t}=\m{g}_i^{t+1}-\m{g}_i^t+h_i^t\m{s}_i^t$.
By the BFGS updating formula \eqref{H1},  $H_i^t(\hat{\m{y}}_i^t)$ is guaranteed to be positive definite and uniformly bounded.
We now explain why the computational cost of the smallest and largest eigenvalues of
$\m{H}_i^{t+1}$ is in fact negligible. 
Note that $\m{H}_i^{t+1} = H_i^t({\m{y}}_i^t)$ is obtained 
from the scalar matrix $\tau_i^t \m{I}_p$ by a rank-two BFGS update.
Hence, if $\tau_i^t = \frac{(\m{s}_i^t)\tr \m{y}_i^t}{\|\m{y}_i^t\|^2}  >0 $,
$\m{H}_i^{t+1}$ has $p-2$ eigenvalues equal to $\tau_i^t$ and 
two eigenvalues $0< \lambda_i^{t+1} \le \Lambda_i^{t+1}$ satisfying
\begin{equation}\label{qua}
	\left\{\begin{array}{cl}
		\lambda_i^{t+1} \Lambda_i^{t+1} &= \frac{\|\m{s}_i^t\|^2}{\|\m{y}_i^t\|^2}, \\
		\lambda_i^{t+1}+ \Lambda_i^{t+1} &=\frac{2\|\m{s}_i^t\|^2}{(\m{s}_i^t)\tr \m{y}_i^t}.
	\end{array}\right.
\end{equation}
The system \eqref{qua} defines a quadratic equation, 
which has two roots
\begin{align}\label{lam_H}
	\left\{\begin{array}{cl}
		\lambda_i^{t+1} = \lambda_{\min } (H_i^t(\m{y}_i^t))
		=\frac{\|\m{s}_i^t\|^2}{(\m{s}_i^t)\tr {\m{y}}_i^t}\left(1- \sqrt{1- \frac{((\m{s}_i^t)\tr {\m{y}}_i^t)^2}{\|\m{s}_i^t\|^2\|{\m{y}}_i^t\|^2}} \right),\\
		~\\
		\Lambda_i^{t+1}	=	\lambda_{\max } (H_i^t(\m{y}_i^t))
		= \frac{\|\m{s}_i^t\|^2}{(\m{s}_i^t)\tr {\m{y}}_i^t}\left(1+ \sqrt{1- \frac{((\m{s}_i^t)\tr {\m{y}}_i^t)^2}{\|\m{s}_i^t\|^2\|{\m{y}}_i^t\|^2}} \right).
	\end{array}\right.
\end{align}
$\m{H}^t$ given as \eqref{H1} is easily shown to be bounded with $\tau_i^t=\frac{(\m{s}_i^t)\tr \m{y}_i^t}{\|\m{y}_i^t\|^2}$. From \eqref{checks}, we acquire $l\m{I}_p \preceq \m{H}_i^{t+1} \preceq u\m{I}_p$ when $\m{y}_i^t$ takes $\check{\m{y}}_i^t$. When $\m{y}_i^t$ takes $\hat{\m{y}}_i^t$, we have by \eqref{lam_H} that
\begin{equation*}
	\frac{(\m{s}_i^t)\tr \hat{\m{y}}_i^t}{2\|\hat{\m{y}}_i^t\|^2}\m{I}_p \preceq \m{H}_i^{t+1} \preceq \frac{2\|\m{s}_i^t\|^2}{(\m{s}_i^t)\tr \hat{\m{y}}_i^t}\m{I}_p.
\end{equation*}
Since $\|\hat{\m{y}}_i^{t}\|=\|\m{g}_i^{t+1}-\m{g}_i^t+h_i^t\m{s}_i^t\| \leq (2L+\varrho)\|\m{s}_i^t\|$ 
by the $L$-Lipschitz continuity of $\nabla f_i$, we have $(\m{s}_i^t)\tr \hat{\m{y}}_i^t \geq \varrho\|\m{s}_i^t\|^2 \geq \hat{\varrho} \|\hat{\m{y}}_i^t\|^2$ with $\hat{\varrho}=\frac{\varrho}{(2L+\varrho)^2}$, which implies
\begin{equation*}
	\min \left\{l,\frac{\hat{\varrho}}{2}\right\}\m{I} \preceq \m{H}^{t+1} \preceq \max \left\{u,\frac{2}{\varrho}\right\}\m{I}.
\end{equation*}
Hence, search direction $\m{d}_i^{t+1}=-\m{H}_i^{t+1}\m{v}_i^{t+1}$ given by the memoryless BFGS method can be written as
\begin{equation}\label{H1_direction}
	\m{d}_i^{t+1}=-\frac{(\m{s}_i^t)\tr \m{y}_i^t}{\|\m{y}_i^t\|^2} \m{v}_i^{t+1} +
	\frac{((\m{y}_i^t)\tr\m{v}_i^{t+1})\m{s}_i^t+((\m{s}_i^t)\tr\m{v}_i^{t+1})\m{y}_i^t}{\|\m{y}_i^t\|^2} - 
	2 \frac{(\m{s}_i^t)\tr\m{v}_i^{t+1}}{(\m{s}_i^{t})\tr\m{y}_i^t}\m{s}_i^t.
\end{equation}
Observe that it is also very cheap to update the search direction, since  only five vector-to-vector inner products 
 $\|\m{s}_i^t\|^2$, $\|\m{y}_i^t\|^2$, $(\m{s}_i^t)\tr {\m{y}}_i^t$, $(\m{s}_i^t)\tr {\m{v}}_i^{t+1}$, and $(\m{y}_i^t)\tr {\m{v}}_i^{t+1}$
are needed at each iteration.

\subsubsection{Corrected quasi-Newton updates}
Motivated by \cite{curtis2016self}, we let
\begin{equation}\label{brevey}
	\breve{\m{y}}^{t}_i=\eta_i^t\check{\m{y}}^{t}_i+(1-\eta_i^t)\m{s}^{t}_{i},
\end{equation}
where $\eta_i^t \in (0,1]$ is chosen such that 
\begin{equation}\label{twobound}
	 \frac{	(\m{s}^{t}_{i})\tr\breve{\m{y}}^{t}_i}{\|\m{s}^{t}_{i}\|^2} \geq \lambda
\end{equation}
for some $\lambda \in (0,1)$.
 The condition \eqref{twobound} is crucial for ensuring the positive definite Hessian inverse approximations in both centralized and decentralized settings.
We propose the following  adaptive strategy for choosing $\eta_i^t$ to ensure  \eqref{twobound} holds:
\begin{equation}\label{etait}
	\eta_i^t=\min\left\{\hat{\eta}_i^t,\frac{\hat{L}\|\m{s}^{t}_{i}\|}{\|\check{\m{y}}^{t}_i\|}\right\}
\end{equation}
where $\hat{L}$ is some positive constant  and $\hat{\eta}_i^t$ is defined as
\begin{equation}\label{hat_eta}
	\hat{\eta}_i^t=\left\{\begin{array}{cl}
		\frac{(1-\lambda)\|\m{s}^{t}_{i}\|^2}{\|\m{s}^{t}_{i}\|^2-(\m{s}^{t}_{i})\tr\check{\m{y}}^{t}_i}, & \text {if }(\m{s}^{t}_{i})\tr\check{\m{y}}^{t}_i \leq \lambda\|\m{s}^{t}_{i}\|^2; \\
		1, & \text {otherwise}.
	\end{array}\right.
\end{equation}
\begin{lemma}\label{lem 2.7}
	For $\eta_i^t$ defined in \eqref{etait} and $\breve{\m{y}}^{t}_i$ defined in \eqref{brevey}, we have $0<{\eta}_i^t
	\leq1$ and $(\m{s}^{t}_{i})\tr\breve{\m{y}}^{t}_i \geq \lambda\|\m{s}^{t}_{i}\|^2$.
\end{lemma}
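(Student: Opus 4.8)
The plan is to establish the two assertions separately, treating $0<\eta_i^t\le 1$ first and then the curvature bound $(\m{s}^{t}_{i})\tr\breve{\m{y}}^{t}_i \geq \lambda\norm{\m{s}^{t}_{i}}^2$. Throughout I assume $\m{s}^{t}_{i}\neq 0$ and $\check{\m{y}}^{t}_i\neq 0$, so that the ratio $\hat{L}\norm{\m{s}^{t}_{i}}/\norm{\check{\m{y}}^{t}_i}$ appearing in \eqref{etait} is well defined and strictly positive. Writing $a:=(\m{s}^{t}_{i})\tr\check{\m{y}}^{t}_i$ and $b:=\norm{\m{s}^{t}_{i}}^2>0$ keeps the algebra compact.

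For $0<\eta_i^t\le 1$, since $\eta_i^t$ is the minimum of $\hat{\eta}_i^t$ and the strictly positive quantity $\hat{L}\norm{\m{s}^{t}_{i}}/\norm{\check{\m{y}}^{t}_i}$, it suffices to show $0<\hat{\eta}_i^t\le 1$. In the branch $a>\lambda b$ of \eqref{hat_eta} we have $\hat{\eta}_i^t=1$, so both bounds are immediate. In the branch $a\le \lambda b$, the denominator satisfies $b-a\ge (1-\lambda)b>0$, so the positive numerator $(1-\lambda)b$ gives $\hat{\eta}_i^t>0$; moreover $\hat{\eta}_i^t\le 1$ is equivalent to $(1-\lambda)b\le b-a$, i.e.\ $a\le\lambda b$, which is exactly the defining condition of this branch. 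Hence $0<\hat{\eta}_i^t\le 1$ in both branches, and therefore $0<\eta_i^t\le \hat{\eta}_i^t\le 1$.

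For the curvature bound, I would substitute \eqref{brevey} and simplify,
\begin{equation*}
	(\m{s}^{t}_{i})\tr\breve{\m{y}}^{t}_i=\eta_i^t a+(1-\eta_i^t)b,
\end{equation*}
so that the target $(\m{s}^{t}_{i})\tr\breve{\m{y}}^{t}_i\ge\lambda b$ becomes, after rearranging, the single inequality
\begin{equation*}
	\eta_i^t\,(b-a)\le (1-\lambda)\,b .
\end{equation*}
This I would verify by a short case split. If $b-a\le 0$ the left-hand side is nonpositive (as $\eta_i^t>0$) while the right-hand side is positive, so the inequality is trivial. If $b-a>0$ the inequality is equivalent to $\eta_i^t\le (1-\lambda)b/(b-a)$; when $a\le\lambda b$ this upper bound equals $\hat{\eta}_i^t$ and $\eta_i^t\le\hat{\eta}_i^t$ closes the case, whereas when $a>\lambda b$ one has $b-a<(1-\lambda)b$, so $(1-\lambda)b/(b-a)>1\ge\eta_i^t$.

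The argument is essentially bookkeeping; the only point requiring care is the sign of the bracket $b-a=\norm{\m{s}^{t}_{i}}^2-(\m{s}^{t}_{i})\tr\check{\m{y}}^{t}_i$, which may be positive or negative and dictates whether the threshold $\hat{\eta}_i^t$ or the plain bound $\eta_i^t\le 1$ is the operative constraint. I expect this case distinction to be the main, albeit mild, obstacle. Note finally that the second argument $\hat{L}\norm{\m{s}^{t}_{i}}/\norm{\check{\m{y}}^{t}_i}$ of the minimum plays no role in the curvature bound: it can only further decrease $\eta_i^t$, which preserves both inequalities, and its purpose is instead to control $\norm{\breve{\m{y}}^{t}_i}$ relative to $\norm{\m{s}^{t}_{i}}$ in the subsequent eigenvalue analysis.
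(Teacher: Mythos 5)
Your proof is correct and follows essentially the same route as the paper: establish $0<\hat{\eta}_i^t\le 1$ branchwise, deduce $0<\eta_i^t\le 1$ from the minimum, and then substitute \eqref{brevey} into the inner product. In fact your case analysis on the sign of $\|\m{s}^{t}_{i}\|^2-(\m{s}^{t}_{i})\tr\check{\m{y}}^{t}_i$ is slightly more careful than the paper's, whose displayed identity $(\m{s}^{t}_{i})\tr\breve{\m{y}}^{t}_i=\lambda\|\m{s}^{t}_{i}\|^2$ (resp.\ $=(\m{s}^{t}_{i})\tr\check{\m{y}}^{t}_i$) holds with equality only when the minimum in \eqref{etait} selects $\hat{\eta}_i^t$; when it selects $\hat{L}\|\m{s}^{t}_{i}\|/\|\check{\m{y}}^{t}_i\|$ one only gets the inequality $\ge$, which your argument handles explicitly.
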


\begin{proof}
	When $(\m{s}^{t}_{i})\tr\check{\m{y}}^{t}_i \leq \lambda\|\m{s}^{t}_{i}\|^2$, it can be readily verified that $0 < \hat{\eta}_i^t \leq 1$. Since $\eta_i^t = \min\left\{\hat{\eta}_i^t, \frac{\hat{L}\|\m{s}^{t}_{i}\|}{\|\check{\m{y}}^{t}_i\| }\right\}$, it follows immediately that $0 < \eta_i^t \leq 1$. 
	Taking the inner product of $\m{s}^{t}_{i}$ with both sides of \eqref{brevey}, we obtain
	\begin{align}\label{inner_g_eta}
		(\m{s}^{t}_{i})\tr\breve{\m{y}}^{t}_i
		= \eta_i^t \left( (\m{s}^{t}_{i})\tr\check{\m{y}}^{t}_i - \|\m{s}^{t}_{i}\|^2 \right) + \|\m{s}^{t}_{i}\|^2.
	\end{align}
 If $(\m{s}^{t}_{i})\tr\check{\m{y}}^{t}_i \leq \lambda\|\m{s}^{t}_{i}\|^2$, the term $((\m{s}^{t}_{i})\tr\check{\m{y}}^{t}_i - \|\m{s}^{t}_{i}\|^2)$ is strictly negative. Since $\eta_i^t \leq \hat{\eta}_i^t$, we have
	\begin{align*}
		(\m{s}^{t}_{i})\tr\breve{\m{y}}^{t}_i 
		&\geq \hat{\eta}_i^t \left( (\m{s}^{t}_{i})\tr\check{\m{y}}^{t}_i - \|\m{s}^{t}_{i}\|^2 \right) + \|\m{s}^{t}_{i}\|^2 \\
		&= \frac{(1-\lambda)\|\m{s}^{t}_{i}\|^2}{\|\m{s}^{t}_{i}\|^2-(\m{s}^{t}_{i})\tr\check{\m{y}}^{t}_i} \left( (\m{s}^{t}_{i})\tr\check{\m{y}}^{t}_i - \|\m{s}^{t}_{i}\|^2 \right) + \|\m{s}^{t}_{i}\|^2 \\
		&= -(1-\lambda)\|\m{s}^{t}_{i}\|^2 + \|\m{s}^{t}_{i}\|^2 = \lambda\|\m{s}^{t}_{i}\|^2.
	\end{align*}
If $(\m{s}^{t}_{i})\tr\check{\m{y}}^{t}_i > \lambda\|\m{s}^{t}_{i}\|^2$, we can rewrite \eqref{inner_g_eta} as a convex combination:
	\begin{align*}
		(\m{s}^{t}_{i})\tr\breve{\m{y}}^{t}_i = \eta_i^t (\m{s}^{t}_{i})\tr\check{\m{y}}^{t}_i + (1-\eta_i^t)\|\m{s}^{t}_{i}\|^2.
	\end{align*}
	Since $\eta_i^t \in (0, 1]$, and both $(\m{s}^{t}_{i})\tr\check{\m{y}}^{t}_i > \lambda\|\m{s}^{t}_{i}\|^2$ and $\|\m{s}^{t}_{i}\|^2 > \lambda\|\m{s}^{t}_{i}\|^2$ (because $\lambda < 1$), their convex combination must be strictly greater than $\lambda\|\m{s}^{t}_{i}\|^2$.
\end{proof}

The nonlinear conjugate gradient (NCG) methods proposed by Dai-Kou \cite{dai2013nonlinear} and Hager-Zhang \cite{hager2005new} are closely related to quasi-Newton methods. 
Extending the NCG search directions given in \cite{dai2013nonlinear,hager2005new} to the decentralized setting,  we can set 
\[
\tilde{\m{d}}_i^t=-\tilde{\m{H}}_i^t \m{v}_i^t,
\]
where 
\begin{align*}
	\tilde{\m{H}}_i^{t}=\m{I}-\frac{\m{s}^{t-1}_{i}(\m{z}^{t-1}_i)\tr}{(\m{s}^{t-1}_{i})\tr\breve{\m{y}}^{t-1}_i},~\m{z}^{t-1}_i=\breve{\m{y}}^{t-1}_i-\tau p^{t-1}_i\m{s}^{t-1}_i,~ p^{t-1}_i=\frac{\|\breve{\m{y}}^{t-1}_i\|^2}{(\m{s}^{t-1}_{i})\tr\breve{\m{y}}^{t-1}_i},
\end{align*}
and the parameter $\tau$ is suggested to lie in the interval $[1,2]$.
By symmetrizing $\tilde{\m{H}}_i^{t}$, we obtain a corrected quasi-Newton update for $\m{H}_i^t$:
\begin{equation}\label{DK_H1}
	{\m{H}}_i^{t}=\frac{\tilde{\m{H}}_i^{t}+(\tilde{\m{H}}_i^{t})\tr}{2}=\m{I}-\frac{1}{2}\frac{\m{s}^{t-1}_{i}(\m{z}^{t-1}_i)\tr+\m{z}^{t-1}_i(\m{s}^{t-1}_{i})\tr}{(\m{s}^{t-1}_{i})\tr\breve{\m{y}}^{t-1}_i},
\end{equation}
The search direction $\m{d}_i^t$ for this corrected quasi-Newton method is calculated by
\[
\m{d}_i^{t}=-{\m{H}}_i^{t}\m{v}_i^{t}=-\m{v}_i^{t}+\frac{(\m{z}^{t-1}_i)\tr\m{v}_i^{t}}{2(\m{s}^{t-1}_{i})\tr\breve{\m{y}}^{t-1}_i}\m{s}^{t-1}_{i}+\frac{(\m{s}^{t-1}_{i})\tr\m{v}_i^{t}}{2(\m{s}^{t-1}_{i})\tr\breve{\m{y}}^{t-1}_i}\m{z}^{t-1}_i.
\]
In the following lemma, we give uniform bounds on the eigenvalues of ${\m{H}}_i^{t}$ for any $i \in \{1,\ldots,n\}$ and $t \geq 0$. 

\begin{lemma}\label{lem 2.8}
	For ${\m{H}}_i^{t}$ defined by \eqref{DK_H1}, the eigenvalues of the Hessian inverse approximations are uniformly bounded as
	$$
	\frac{3}{4} \leq \lambda_{\min}({\m{H}}_i^{t}) \leq \lambda_{\max}({\m{H}}_i^{t}) \leq 1 + \frac{\sqrt{2(\hat{L}^2+1)}}{\lambda} + \frac{2\tau(\hat{L}^2+1)}{\lambda^2}.
	$$
\end{lemma}

\begin{proof}
	First, we establish the uniform upper bound. It is a standard property that $\lambda_{\max}({\m{H}}_i^{t}) \leq \|{\m{H}}_i^{t}\|_2$. By applying the triangle inequality and the submultiplicativity of the spectral norm, we obtain
	\begin{align}\label{H_norm_bound}
		\|{\m{H}}_i^{t}\|_2 &\leq \|\m{I}\|_2 + \frac{1}{2|(\m{s}^{t-1}_{i})\tr\breve{\m{y}}^{t-1}_i|} \left\| \m{s}^{t-1}_{i}(\m{z}^{t-1}_i)\tr + \m{z}^{t-1}_i(\m{s}^{t-1}_{i})\tr \right\|_2 \notag \\
		&\leq 1 + \frac{\|\m{s}^{t-1}_{i}\|\|\m{z}^{t-1}_i\|}{(\m{s}^{t-1}_{i})\tr\breve{\m{y}}^{t-1}_i},
	\end{align}
	where the second inequality uses the fact that $(\m{s}^{t-1}_{i})\tr\breve{\m{y}}^{t-1}_i > 0$ from Lemma \ref{lem 2.7}, and the relation $\|\m{u}\m{v}\tr + \m{v}\m{u}\tr\|_2 \leq 2\|\m{u}\|\|\m{v}\|$ for any $\m{u},\m{v} \in \R^p$.
	Applying the triangle inequality to the definition of $\m{z}^{t-1}_i$, we get
	$$
	\|\m{z}^{t-1}_i\| \leq \|\breve{\m{y}}^{t-1}_i\| + \tau \frac{\|\breve{\m{y}}^{t-1}_i\|^2}{(\m{s}^{t-1}_{i})\tr\breve{\m{y}}^{t-1}_i} \|\m{s}^{t-1}_{i}\|.
	$$
	Multiplying both sides by $\|\m{s}^{t-1}_{i}\| / ((\m{s}^{t-1}_{i})\tr\breve{\m{y}}^{t-1}_i)$ yields
	\begin{equation}\label{norm_H_bound2}
		\frac{\|\m{s}^{t-1}_{i}\|\|\m{z}^{t-1}_i\|}{(\m{s}^{t-1}_{i})\tr\breve{\m{y}}^{t-1}_i} \leq \frac{\|\m{s}^{t-1}_{i}\|\|\breve{\m{y}}^{t-1}_i\|}{(\m{s}^{t-1}_{i})\tr\breve{\m{y}}^{t-1}_i} + \tau \left( \frac{\|\m{s}^{t-1}_{i}\|\|\breve{\m{y}}^{t-1}_i\|}{(\m{s}^{t-1}_{i})\tr\breve{\m{y}}^{t-1}_i} \right)^2.
	\end{equation}
	From Lemma \ref{lem 2.7}, we know $(\m{s}^{t-1}_{i})\tr\breve{\m{y}}^{t-1}_i \geq \lambda\|\m{s}^{t-1}_{i}\|^2$. Thus, we can bound 
	\begin{equation}\label{norm_H_bound3}
		\frac{\|\m{s}^{t-1}_{i}\|\|\breve{\m{y}}^{t-1}_i\|}{(\m{s}^{t-1}_{i})\tr\breve{\m{y}}^{t-1}_i} \leq \frac{\|\m{s}^{t-1}_{i}\|\|\breve{\m{y}}^{t-1}_i\|}{\lambda\|\m{s}^{t-1}_{i}\|^2} = \frac{\|\breve{\m{y}}^{t-1}_i\|}{\lambda\|\m{s}^{t-1}_{i}\|}.
	\end{equation}
	Using the definition of $\breve{\m{y}}^{t-1}_i$ in \eqref{brevey}, Lemma \ref{important}, and the bound $\eta_i^{t-1} \leq  \frac{\hat{L}\|\m{s}^{t-1}_i\|}{\|\check{\m{y}}^{t-1}_i\|}$ from \eqref{etait}, we can bound 
	\begin{align}\label{norm_H_bound4}
		\|\breve{\m{y}}^{t-1}_i\|^2 &\leq 2(\eta_i^{t-1})^2\|\check{\m{y}}^{t-1}_i\|^2 + 2(1-\eta_i^{t-1})^2\|\m{s}^{t-1}_{i}\|^2 \notag\\
		&\leq 2\hat{L}^2\|\m{s}^{t-1}_{i}\|^2 + 2\|\m{s}^{t-1}_{i}\|^2 = 2(\hat{L}^2+1)\|\m{s}^{t-1}_{i}\|^2.
	\end{align}
	Substituting \eqref{norm_H_bound4}, \eqref{norm_H_bound3}, and \eqref{norm_H_bound2} into \eqref{H_norm_bound}, we obtain the upper bound.
	
	Next, to establish a uniform lower bound on the eigenvalues of ${\m{H}}_i^{t}$, we observe that
	\begin{equation}\label{minla}
		\lambda_{\min}({\m{H}}_i^{t})=\min\left\{1,1-\frac{1}{2}\left(\frac{(\m{s}^{t-1}_{i})\tr{\m{z}}^{t-1}_i}{(\m{s}^{t-1}_{i})\tr\breve{\m{y}}^{t-1}_i}+\frac{\|\m{s}^{t-1}_{i}\|\|{\m{z}}^{t-1}_i\|}{|(\m{s}^{t-1}_{i})\tr\breve{\m{y}}^{t-1}_i|}\right)\right\}.
	\end{equation}
	By defining
	$$
	q_i^{t-1}=\frac{\|\breve{\m{y}}^{t-1}_i\|^2\|{\m{s}}^{t-1}_i\|^2}{((\m{s}^{t-1}_{i})\tr\breve{\m{y}}^{t-1}_i)^2},
	$$
	we can rewrite \eqref{minla} as
	\begin{equation}\label{min_lambda}	
		\lambda_{\min}({\m{H}}_i^{t})=\min\left\{1,\frac{1}{2}\left(1+\tau q_i^{t-1}-\sqrt{\tau^2 (q_i^{t-1})^2-2 \tau q_i^{t-1} + q_i^{t-1}}\right)\right\}.
	\end{equation}
	The second term on the right-hand side of \eqref{min_lambda} is monotonically decreasing when $q_i^{t-1} \geq 1$. As $q_i^{t-1}$ tends to $+\infty$, its limit approaches $1 - \frac{1}{4\tau} \geq \frac{3}{4}$ due to the choice of $\tau \in [1,2]$.
	Hence, we always have $\lambda_{\min}({\m{H}}_i^{t}) \geq 3/4$.
	Combining this with the uniform upper bound established earlier, the proof is complete.
\end{proof}

\section{NUMERICAL EXPERIMENTS}
In this section, we would examine the performance of our developed algorithms in the following outline:
\begin{itemize}
\item[a.] Compare UDOAs using different approximate $\m{H}^t$s to Hessian inverse;
\item[b.] Compare UDOAs with some well-developed nonconvex optimization algorithms, including gradient-based algorithms \cite{nedic2017achieving,NEURIPS2023_98f8c89a,takezawa2022momentum,huang2024accelerated} and quasi-Newton algorithms \cite{shorinwa2024distributed,zhang2023variance}.
\end{itemize}
The optimization problem \eqref{obj_fun1} is smooth but nonconvex, over a connected undirected network with edge density $d \in (0,1]$.
For the generated network, we choose the Metropolis constant edge weight matrix \cite{xiao2007distributed} as the mixing matrix with
\begin{equation*}
	\tilde{W}_{i j}=\left\{\begin{array}{cl}
		\frac{1}{\max \{\operatorname{deg}(i), \operatorname{deg}(j)\}+1}, & \text { if }(i, j) \in \mathcal{E}; \\
		0, & \text { if }(i, j) \notin \mathcal{E} \text { and } i \neq j; \\
		1-\sum_{k \in \mathcal{N}_i/ \{i\}} \tilde{W}_{i k}, & \text { if } i=j,
	\end{array}\right.
\end{equation*}
where $(i, j) \in \mathcal{E}$ indicates there is an edge between 
node $i$ and node $j$, and $\operatorname{deg}(i)$ is the degree of node $i$. 
In our experiments, we introduce the communication volume required by an method, which can be calculated as 
\begin{align*}\notag
	\text{Communication volume} = & 
	\text{~iteration number}  \times \text{~the number of edges $\frac{dn(n-1)}{2}$} \\
	&\times  \text{~number of communication rounds per iteration} \\
	& \times   \text{~dimension of transmitted vectors on each edge}.
\end{align*}
In all experiments, we set the number of nodes $n=10$ and the edge density $d=0.56$ for the network. For all comparison algorithms, we initialize $\m{x}^0=\m{0}$. 
All experiments are coded in MATLAB R2017b and run on a laptop with Intel Core i5-9300H CPU, 16GB RAM, and Windows 10 operating system.

In particular, we consider the nonconvex decentralized binary classification problem. Using a logistic regression formulation with a nonconvex regularization, the optimization problem is given by
\begin{equation}\label{nonconvex_logistic_problem}
	\mathop {\min }\limits_{\m{z} \in {\mathbb{R}^p}} \sum_{i=1}^n  \sum_{j=1}^{m_i} \log \left(1+\exp (-b_{ij} \m{a}_{ij}\tr \m{z} ) \right)+\hat{\lambda} \sum_{k=1}^p \frac{\m{z}_{[k]}^2}{1+\m{z}_{[k]}^2},
\end{equation}
where $\m{a}_{ij} \in \mathbb{R}^p$ is the feature vector, $ b_{ij} \in \{-1,+1\}$ is the label, $\m{z}_{[k]}$ denotes the $k$-th component of $\m{z}$, and the regularization parameter is set to $\hat{\lambda} =1$.
The logistic loss function is the semi-algebraic function \cite{li2018calculus}. The regularization term is a rational function and hence, is also semi-algebraic. The sum of semi-algebraic functions remains semi-algebraic, which satisfies the K{\L} property. Thus, this objective function in \eqref{nonconvex_logistic_problem} satisfies the K{\L} property in ${\mathbb{R}^p}$.
From the first-order stationary condition \eqref{stationarity}, the success of each algorithm is measured by 
\begin{equation*}
\mbox{Optimality error} :=	\left\| \frac{1}{n}\sum_{i=1}^n\nabla f_i(\m{x}^t_{i}) \right\|+\|\m{x}^t-\m{M}\m{x}^t\|.
\end{equation*}
We utilize six datasets from the LIBSVM library, detailed in Table~\ref{table2}, and categorize these datasets into two groups:
\begin{itemize}
	\item \textbf{Large Sample and Low Dimension:} \textbf{mushrooms}, \textbf{ijcnn1}, \textbf{w8a}, and \textbf{a9a}. These datasets feature a large number of samples, mainly used to test convergence rate and communication efficiency.
	\item \textbf{Small Sample and High Dimension:} \textbf{colon-cancer} and \textbf{duke breast-cancer}. These datasets have a high feature dimension ($p \gg n$), which serves to evaluate the computational efficiency (CPU time) of the algorithms when Hessian approximations become computationally intensive.
\end{itemize}
\begin{table}[H]
	\caption{Datasets}\label{table2}
	\centering
	\begin{tabular}{ccc}
		\hline
		\hline
		{Dataset}&\# of samples ($\sum_{i=1}^n m_i$)  &\# of features ($p$)  \\
		\hline
		\textbf{mushrooms}&8124 &112\\
		\textbf{ijcnn1}&49990&22\\
		\textbf{w8a}&49749&300\\
		\textbf{a9a}&32561&123\\
		\textbf{colon-cancer}&62&2000\\
		\textbf{duke breast-cancer}&44&7129\\
		\hline
		\hline
	\end{tabular}
\end{table}
We first investigate the performance of UDOA with different approximations to Hessian inverse, namely different $\m{H}_i^t$s. Based on the proposed techniques in Section 2.3, four variants of UDOA 
are obtained and represented as UDOA($j$), $j=1,2,3,4$. 
\begin{itemize}
	\item UDOA(1) implements the memoryless SR1 update. Its detailed procedure is given in Algorithm~\ref{UDOA1} (see Appendix B).
	\item UDOA(2) corresponds to the memoryless BFGS method, outlined in Algorithm \ref{UDOA2} (see Appendix B).
	\item UDOA(3) and UDOA(4) represent Dai-Kou type and Hager-Zhang type corrected quasi-Newton methods, respectively. Both procedures are summarized in Algorithm~\ref{UDOA3} (see Appendix B).
\end{itemize}
For datasets \textbf{mushrooms}(\textbf{ijcnn1};\textbf{w8a};\textbf{a9a};\textbf{colon-cancer};\textbf{duke breast-cancer}), the following parameters are set, respectively, according to their better performance:
\begin{itemize}
	\item $\alpha=0.12(0.24;0.21;0.16;0.038;0.022)$, $l=10^{-6}$, and $u=10^6$ in UDOA(1);
	\item $\alpha=0.22(0.32;0.26;0.34;0.14;0.2)$, $\varrho=0.05(0.05;0.01;0.01;0.3;0.5)$, $l=10^{-6}$, and $u=10^6$ in UDOA(2);
	\item $\alpha=0.09(0.14;0.11;0.11;0.013;0.0072)$, $\lambda=0.7(0.8;0.8;0.8;0.7;0.7)$ and $\hat{L}=1(1;3;2;4;4)$ in UDOA(3);
	\item $\alpha=0.05(0.09;0.06;0.07;0.009;0.0046)$, $\lambda=0.7(0.8;0.8;1.1;0.8;0.8)$ and $\hat{L}=2(1;1;2;3;3)$ in UDOA(4).
\end{itemize} 
Fig.~\ref{nonconvex_UDOA} shows the curves of optimality error versus communication volume on the different datasets. Here, it is important to highlight that the communication and computational overheads per iteration remain largely consistent among these UDOA variants. The performance of these algorithms is significantly different. 
UDOA(2) converges fastest on the datasets with a large number of samples, while UDOA(3) wins on 
datasets with high feature dimensionality. 
The rankings of different algorithms are given in Table \ref{table3}, which shows
UDOA(2) and UDOA(1) perform on average the best and worst, respectively.
Hence, we use UDOA(1) and UDOA(2) in the following numerical comparisons with other algorithms.
\begin{figure*}[!t]
	\centering
	\subfloat[\textbf{mushrooms}]{\includegraphics[width=2.5in]{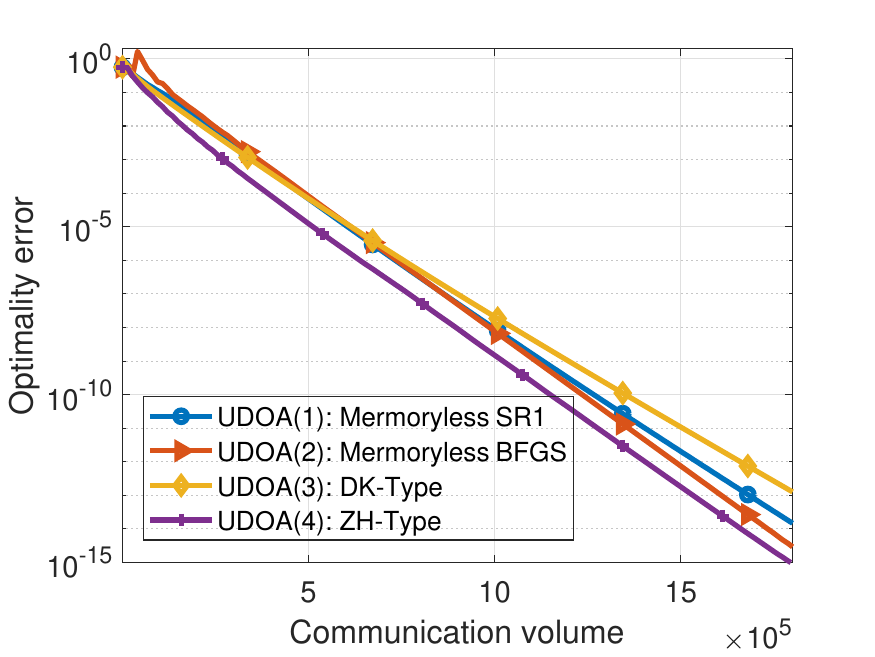}%
		\label{fig_a}}
	\subfloat[\textbf{ijcnn1}]{\includegraphics[width=2.5in]{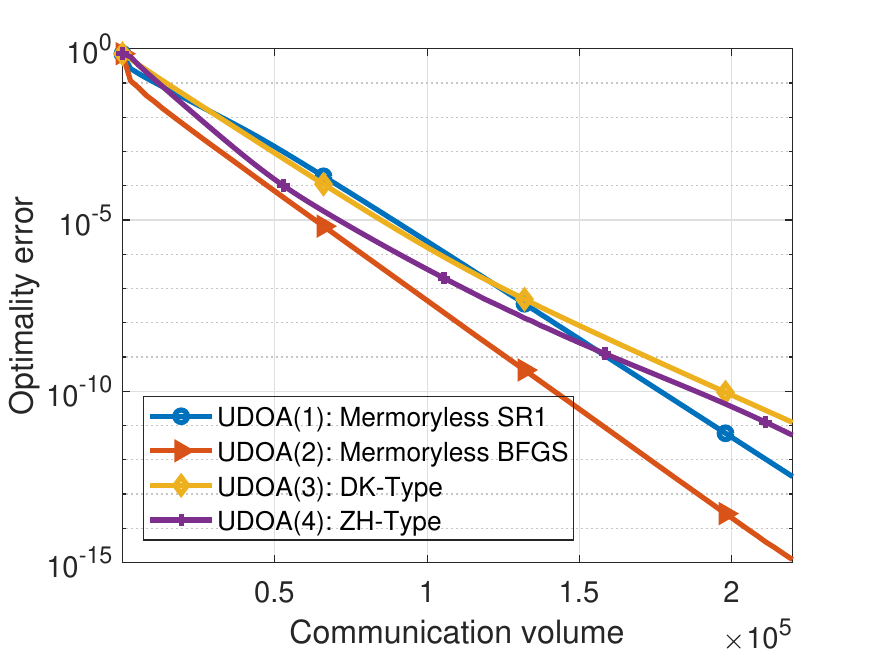}%
		\label{fig_b}}
	\hfil
	\subfloat[\textbf{w8a}]{\includegraphics[width=2.5in]{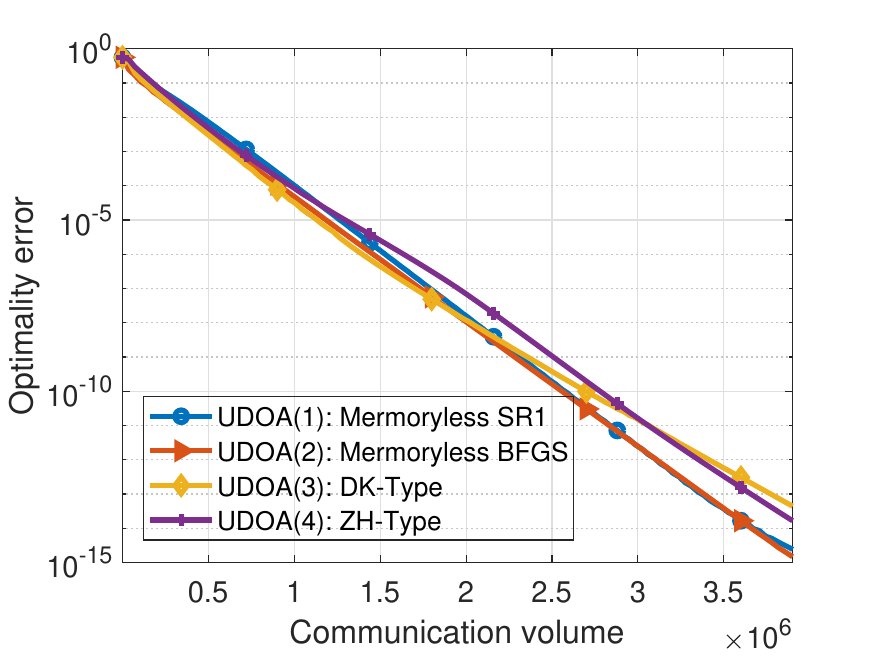}%
		\label{fig_c}}
	\subfloat[\textbf{a9a}]{\includegraphics[width=2.5in]{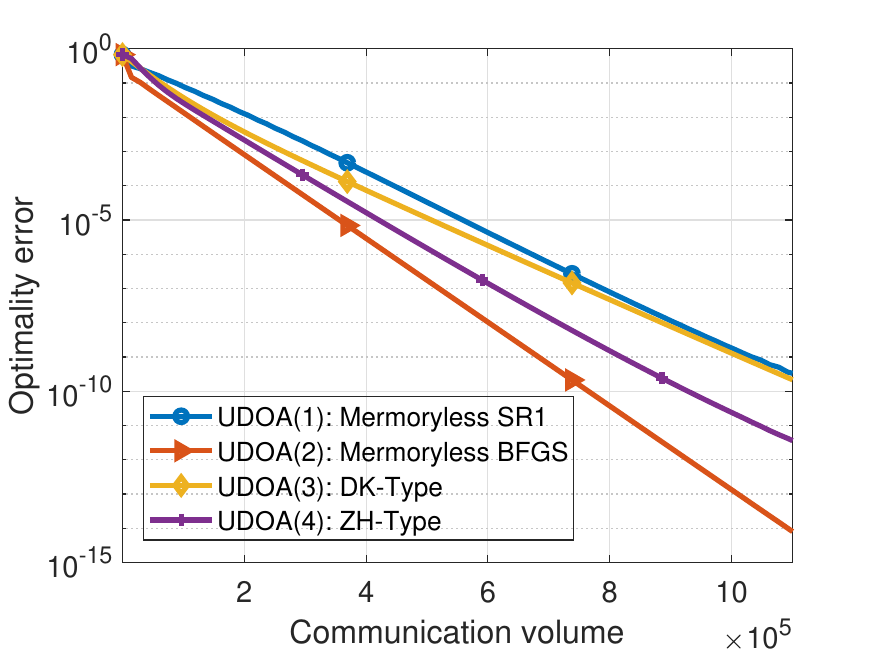}%
		\label{fig_d}}
	\hfil
	\subfloat[\textbf{colon-cancer}]{\includegraphics[width=2.5in]{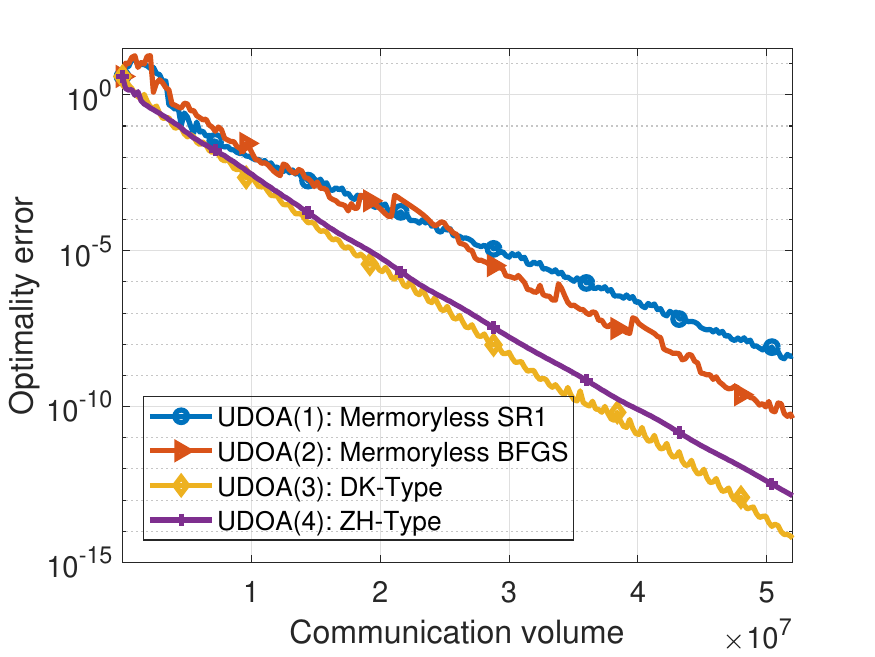}%
		\label{fig_e}}
	\subfloat[\textbf{duke breast-cancer}]{\includegraphics[width=2.5in]{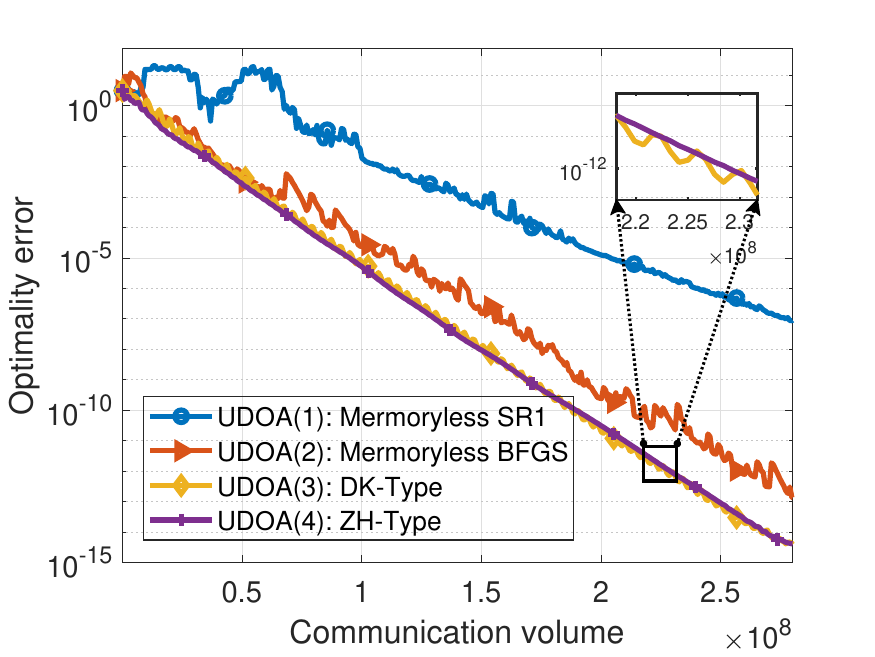}%
		\label{fig_f}}
	\caption{Optimality error of UDOAs for minimizing the nonconvex logistic regression problem \eqref{nonconvex_logistic_problem} on different datasets against communication volume.}
	\label{nonconvex_UDOA}
\end{figure*}
\begin{table}[H]
	\caption{Rankings of UDOAs on different datasets}\label{table3}
	\centering
	\begin{tabular}{cccccc}
		\hline
		\hline
		&UDOA(1)&UDOA(2)&UDOA(3)&UDOA(4)\\
		\hline
		\textbf{mushrooms} & 3& 2&4&1\\
		\textbf{ijcnn1}&2&1&4&3\\
		\textbf{w8a}&2&1&4&3\\
		\textbf{a9a}&4&1&3&2\\
		\textbf{colon-cancer}&4&3&1&2\\
		\textbf{duke breast-cancer}&4&3&1&2\\
		average&3.17&1.83&2.83&2.17\\
		\hline
		\hline
	\end{tabular}
\end{table}
\subsection{On Large Sample Datasets}
We now compare  UDOA(1) and UDOA(2) with the following well-established 
gradient-tracking-type algorithms:
Gradient Tracking (GT) \cite{nedic2017achieving}, Global Update Tracking ({GUT}) \cite{NEURIPS2023_98f8c89a}, Momentum Tracking ({MT}) \cite{takezawa2022momentum}, Distributed Stochastic Momentum Tracking ({DSMT}) \cite{huang2024accelerated}, on the four datasets: \textbf{mushrooms}(\textbf{ijcnn1};\textbf{w8a};\textbf{a9a}). 
 Although {GUT}, {MT}, and {DSMT} are stochastic methods, since we focuses on comparing 
deterministic decentralized methods,  full gradients are used for these methods.
The following parameters are set according to each algorithm's best performance.
Following the notations in their source papers, for each of the above four datasets,
we set $\eta=0.06(0.09;0.09;0.08)$ in GT; set $\eta_t=(0.01;0.01;0.01;0.02)\times n^{1/2}/t^{1/3}$ and $\mu=0.3$ in GUT; set $\eta=0.05$ and $\beta=0.31(0.41;0.33;0.35)$ in MT: set $\eta_{w}=1/(1+\sqrt{1-(1-\sigma)^2})$, $\beta=1-(1-\sqrt{\eta_{w}})/n^{1/3}$, and $\alpha=0.04(0.08;0.04;0.08)$ in DSMT, where $\sigma$ is defined in \textbf{Lemma \ref{property W}}. 
All algorithms except GUT need two rounds of communication per iteration. GUT needs only one round communication per iteration but uses a decreasing stepsize, which yields slow convergence as shown in
Fig.~\ref{nonconvex}. Hence, only optimality error against communication volume is shown in 
Fig.~\ref{nonconvex}, where we can see UDOA(1) and UDOA(2) are significantly better than GT and momentum-based methods, including MT and DSMT, for this set of nonconvex classification problems. 

\begin{figure*}[!t]
	\centering
	\subfloat[\textbf{mushrooms}]{\includegraphics[width=2.5in]{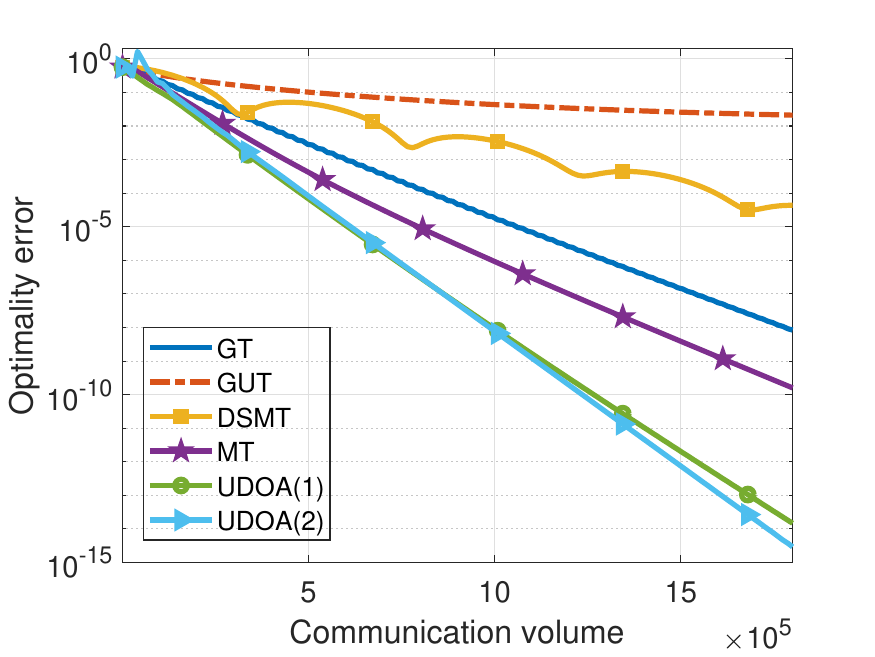}%
		\label{fig_n_a}}
	\subfloat[\textbf{ijcnn1}]{\includegraphics[width=2.5in]{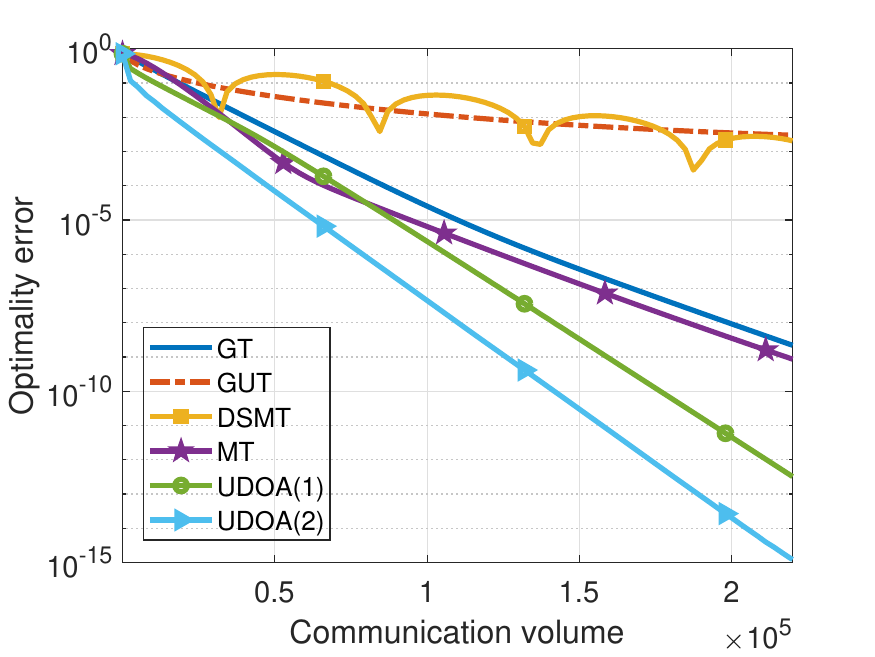}%
		\label{fig_n_b}}
	\hfil
	\subfloat[\textbf{w8a}]{\includegraphics[width=2.5in]{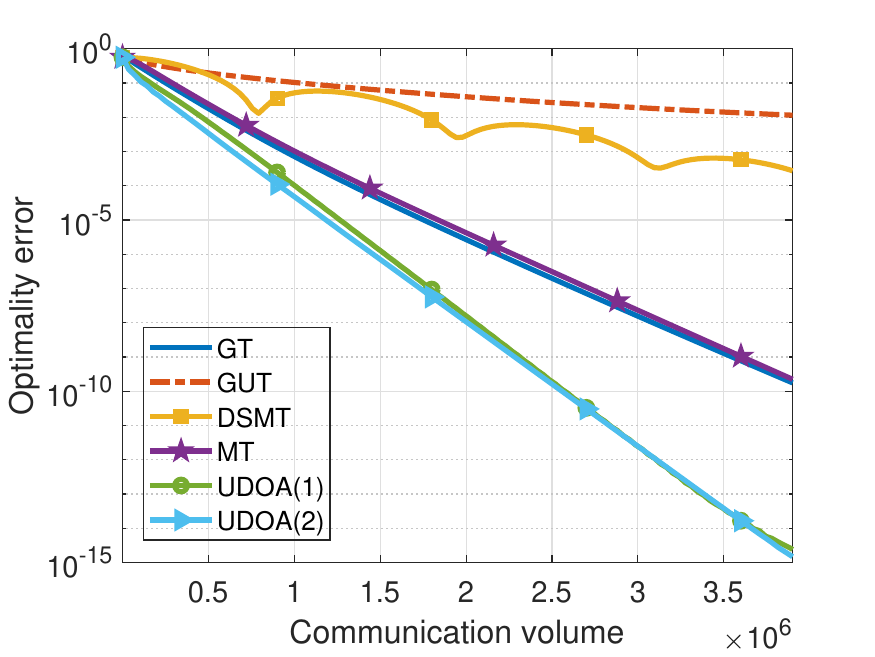}%
		\label{fig_n_c}}
	\subfloat[\textbf{a9a}]{\includegraphics[width=2.5in]{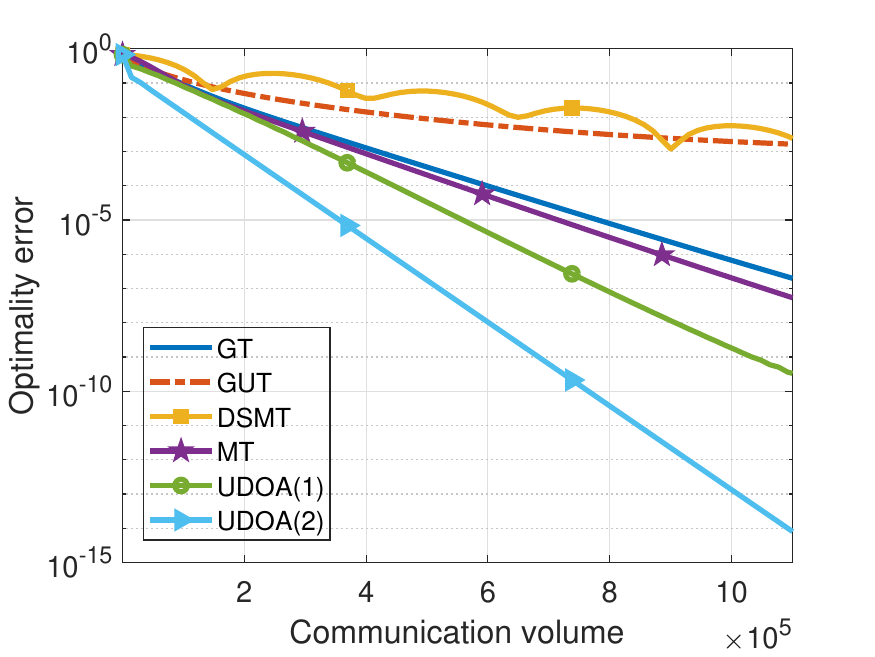}%
		\label{fig_n_d}}
	\caption{Optimality error gradient-tracking-type algorithms for minimizing the nonconvex logistic regression problem \eqref{nonconvex_logistic_problem} on different datasets against communication volume.}
	\label{nonconvex}
\end{figure*}

We next compare  UDOA(1) and UDOA(2) with three well-developed decentralized quasi-Newton algorithms: 
DQN \cite{shorinwa2024distributed}, DR-LM-DFP \cite{zhang2023variance}, and D-LM-BFGS \cite{zhang2023variance}, 
on the four datasets with large sample data: \textbf{mushrooms} (\textbf{ijcnn1}; \textbf{w8a}; \textbf{a9a}).
Again, for each data set, using  the notations in their source papers,
the following parameters are set according to each algorithm's best performance.
In particular, we set $\alpha_i^t=a_* \zeta_i^t$ and $\gamma=3(3;2;3)$ with $a_*=0.28(0.29;0.29;0.3)$ for DQN, where $\zeta_i^t \sim \mbox{U}(0.5,1.5)$ is the random variable generated over node $i$ at iteration $t$ and satisfies the uniform distribution over interval $(0.5, 1.5)$. 
For D-LM-BFGS, we set $\alpha=0.42 (0.34; 0.28; 0.36)$, $\epsilon=10^{-3}$, $\beta=10^{-3}$, $\mathcal{B}=10^4$, $\tilde{L}=5$, $M=4 (4; 5; 8)$.
For DR-LM-DFP, we set $\alpha=0.11 (0.18; 0.16; 0.16)$, $\rho=0.05 (0.04; 0.08; 0.05)$, $\epsilon=10^{-3}$, $\beta=1(10^{-3};10^{-3};10^{-3})$, $\mathcal{B}=10^4$, $\tilde{L}=5$, $M=5 (5; 4; 5)$.
The optimality errors against communication volume  are shown in Fig. \ref{quasi_iter}, where we can see that
in terms of iteration number, DQN performs best for the \textbf{mushrooms} and \textbf{w8a} datasets
 while UDOA(2) performs best for the \textbf{ijcnn1} and \textbf{a9a} datasets.
However, as shown in Fig \ref{quasi_com}, both UDOA(1) and UDOA(2) are significantly efficient in terms of communication volume among the comparison algorithms.
Although having competitive performance in terms of iteration number, DQN demonstrates significant
limitations in terms of communication cost, since three rounds of communication are needed in 
each DQN's iteration.
\subsection{On High Dimension Datasets}
In this subsection, we would like to compare the overall CPU time efficiency of  UDOA(1) and UDOA(2) 
with the state-of-the-art gradient-based and quasi-Newton algorithms. GT, MT, DQN, and D-LM-BFGS are adopted as comparison algorithms, serving as representatives of gradient-based, momentum-based, standard BFGS, and limited-memory BFGS methods, respectively.
Compared with DR-LM-DFP, the quasi-Newton matrix in D-LM-BFGS can be updated by a more efficient
a two-loop recursion, which is crucial for solving problems \eqref{nonconvex_logistic_problem} on datasets 
with large features.

Note that gradient computations dominate the computational cost of the algorithm for solving
\eqref{nonconvex_logistic_problem}
on large sample datasets (\textbf{mushrooms}, \textbf{ijcnn1}, \textbf{w8a}, and \textbf{a9a}). 
Since all comparison algorithms perform a same number of gradient evaluations per iteration, their per-iteration computational costs are comparable. Consequently, on these datasets, the total CPU time is largely proportional to the number of iterations.
So, we now focus this comparison on the \textbf{colon-cancer} and \textbf{duke breast-cancer} datasets, which have 
large features in each sample. In this regime, the efficiency of the Hessian approximation strategy plays a decisive role in the total run time.

For \textbf{colon-cancer} (\textbf{duke breast-cancer}) datasets, 
the following parameters are set according to each algorithm's best performance:
we set $\eta=0.0027(0.0013)$ in GT; set $\eta=0.0035(0.0017)$ and $\beta=0.8(0.89)$ in MT; set $\alpha_i^t=a_* \zeta_i^t$ and $\gamma=2$ with $a_*=0.12(0.1)$ in DQN, where $\zeta_i^t \sim \mbox{U}(0.5,1.5)$; set $\alpha=0.44 (0.32)$, $\epsilon=10^{-3}$, $\beta=10^{-3}$, $\mathcal{B}=10^4$, $\tilde{L}=8(10)$, $M=13(10)$ in D-LM-BFGS.
Again, Figs. \ref{colon} and \ref{breast} show that  UDOA(1) and UDOA(2) are the most efficient in CPU time.
Although DQN performs well in terms of iteration number, it is much slower in CPU time,
since a significant amount of  matrix-vector products are needed in each iteration for communication.

\begin{figure*}[!t]
	\centering
	\subfloat[\textbf{mushrooms}]{\includegraphics[width=2.5in]{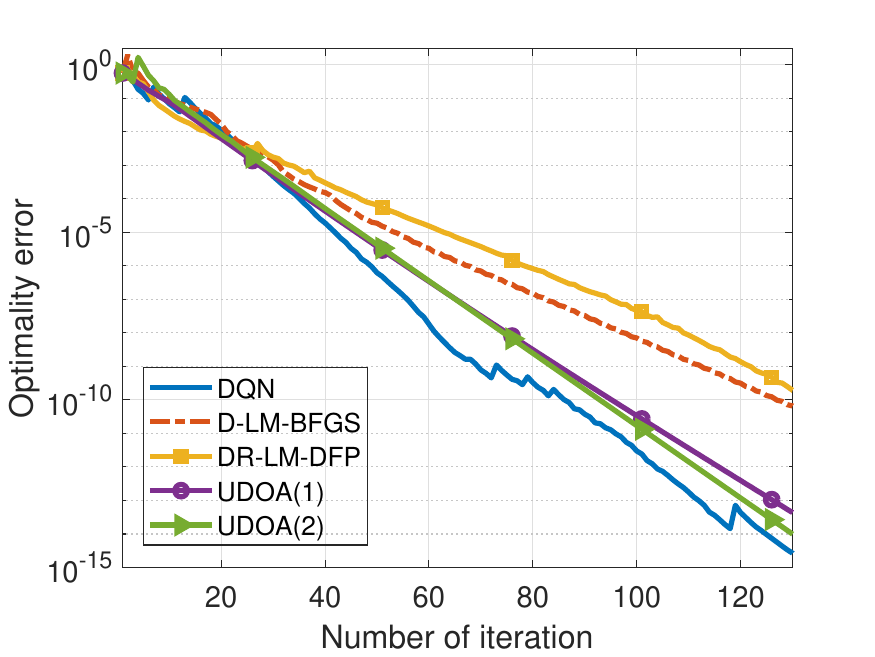}%
		\label{fig_a_iter}}
	\subfloat[\textbf{ijcnn1}]{\includegraphics[width=2.5in]{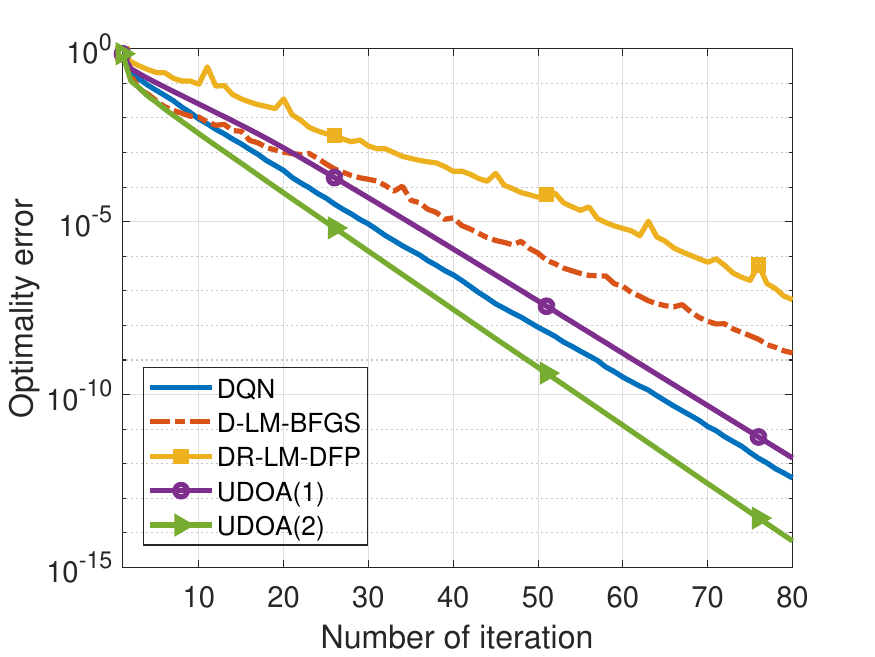}%
		\label{fig_b_iter}}
	\hfil
	\subfloat[\textbf{w8a}]{\includegraphics[width=2.5in]{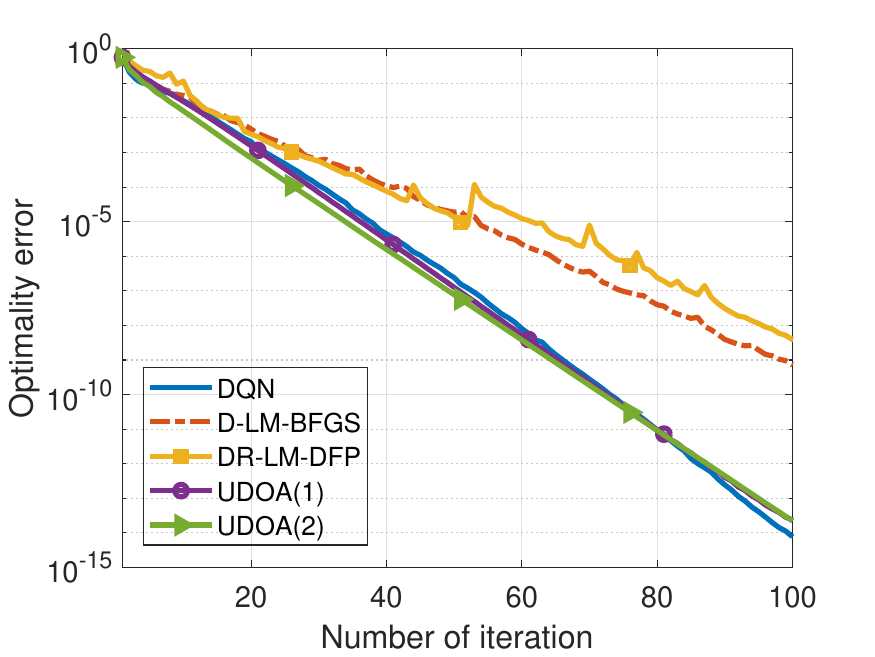}%
		\label{fig_c_iter}}
	\subfloat[\textbf{a9a}]{\includegraphics[width=2.5in]{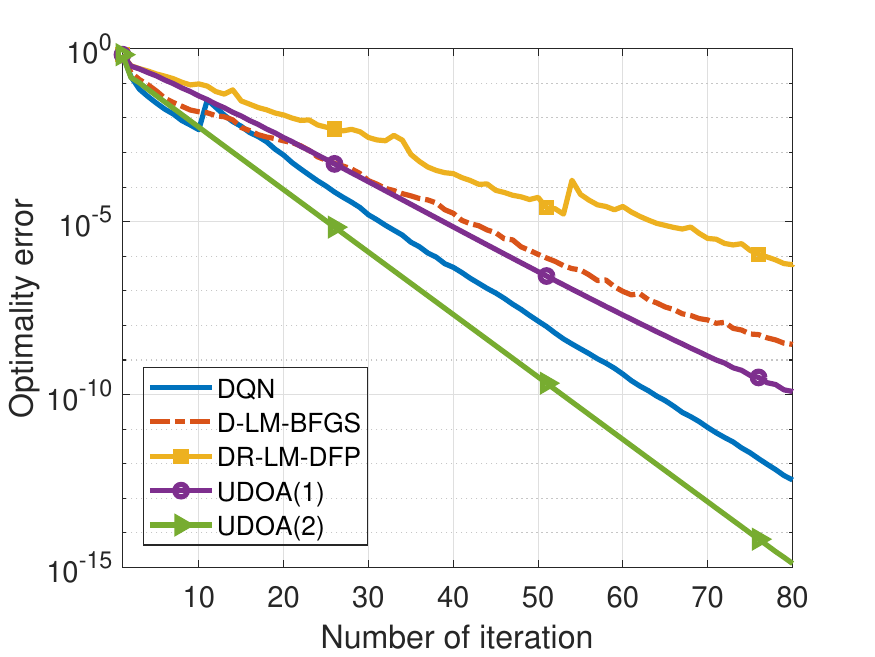}%
		\label{fig_d_iter}}
	\caption{Optimality error of quasi-Newton decentralized algorithms for minimizing the nonconvex logistic regression problem \eqref{nonconvex_logistic_problem} on different datasets against number of iteration.}
	\label{quasi_iter}
\end{figure*}

\begin{figure*}[!t]
	\centering
	\subfloat[\textbf{mushrooms}]{\includegraphics[width=2.5in]{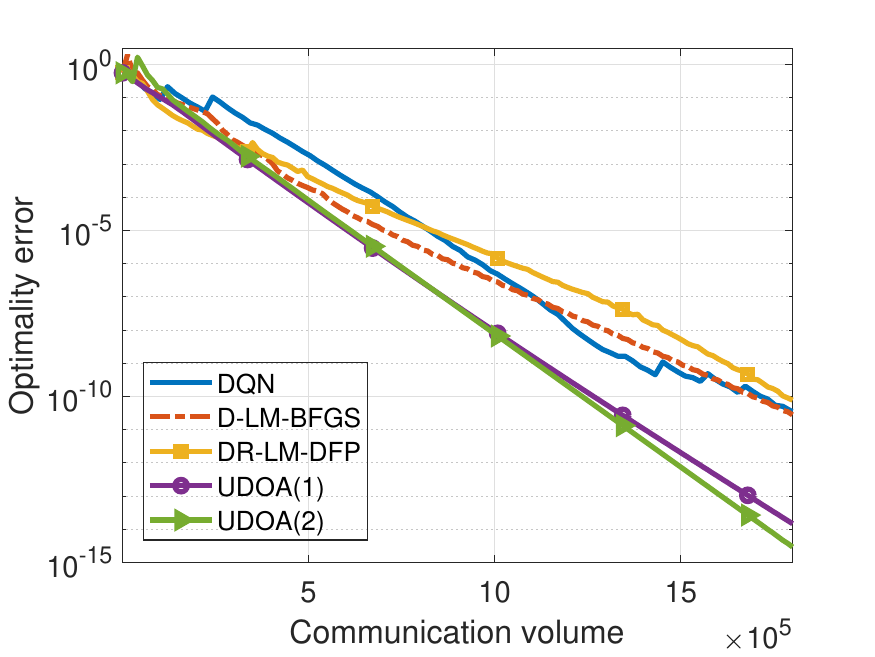}%
		\label{fig_a_com}}
	\subfloat[\textbf{ijcnn1}]{\includegraphics[width=2.5in]{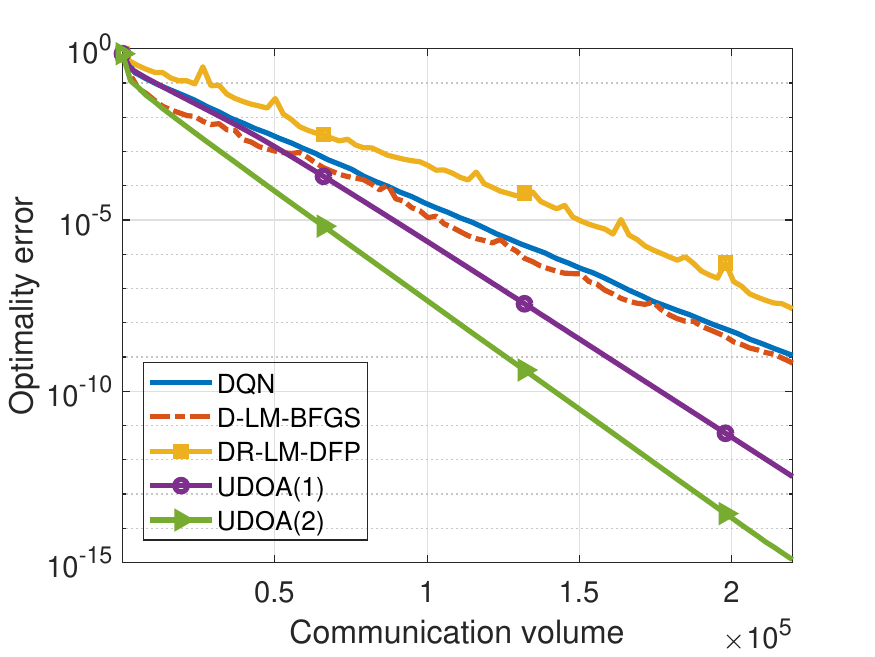}%
		\label{fig_b_com}}
	\hfil
	\subfloat[\textbf{w8a}]{\includegraphics[width=2.5in]{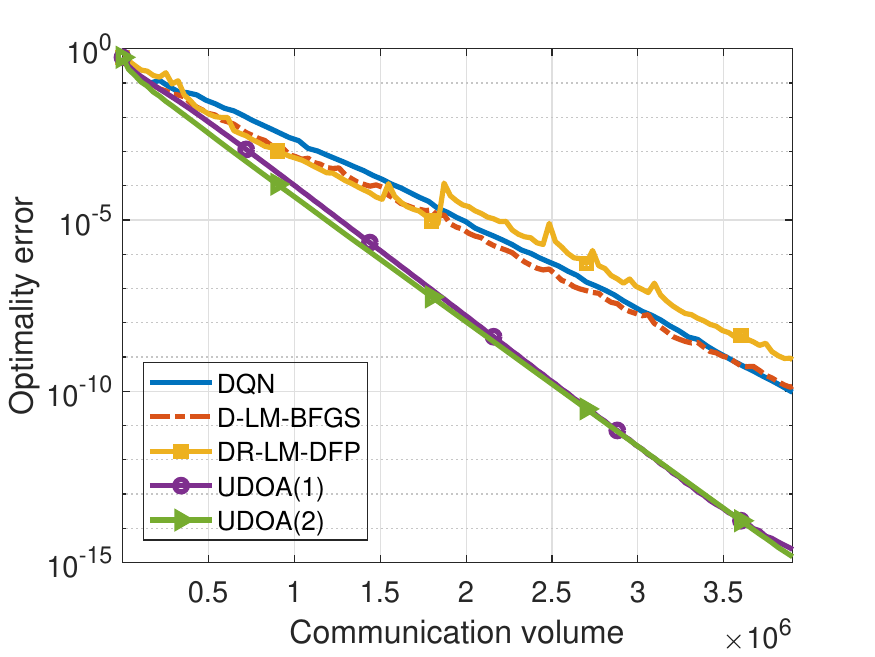}%
		\label{fig_c_com}}
	\subfloat[\textbf{a9a}]{\includegraphics[width=2.5in]{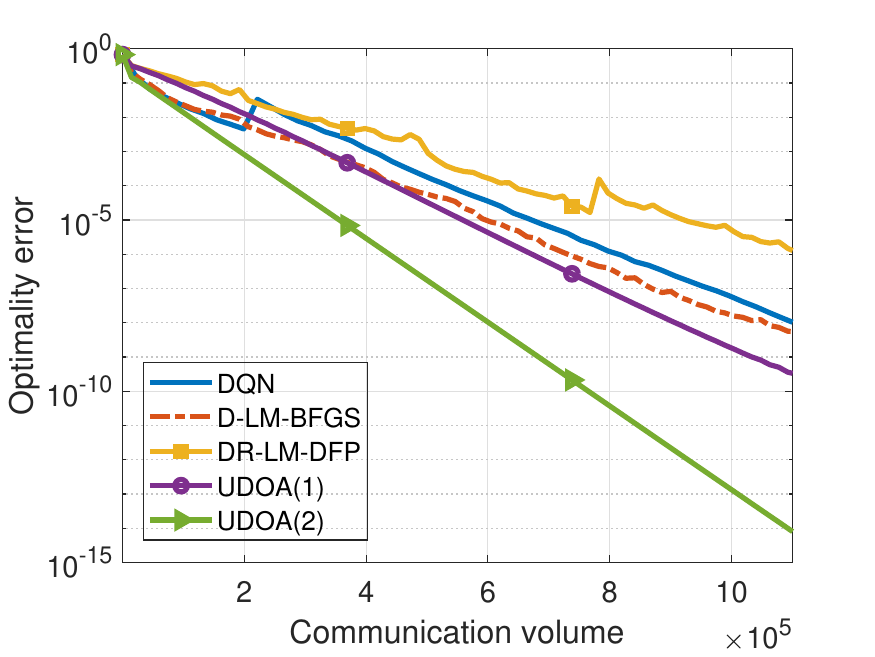}%
		\label{fig_d_com}}
	\caption{Optimality error of quasi-Newton decentralized algorithms for minimizing the nonconvex logistic regression problem \eqref{nonconvex_logistic_problem} on different datasets against communication volume.}
	\label{quasi_com}
\end{figure*}

\begin{figure*}[!t]
	\centering
	\subfloat[]{\includegraphics[width=2.5in]{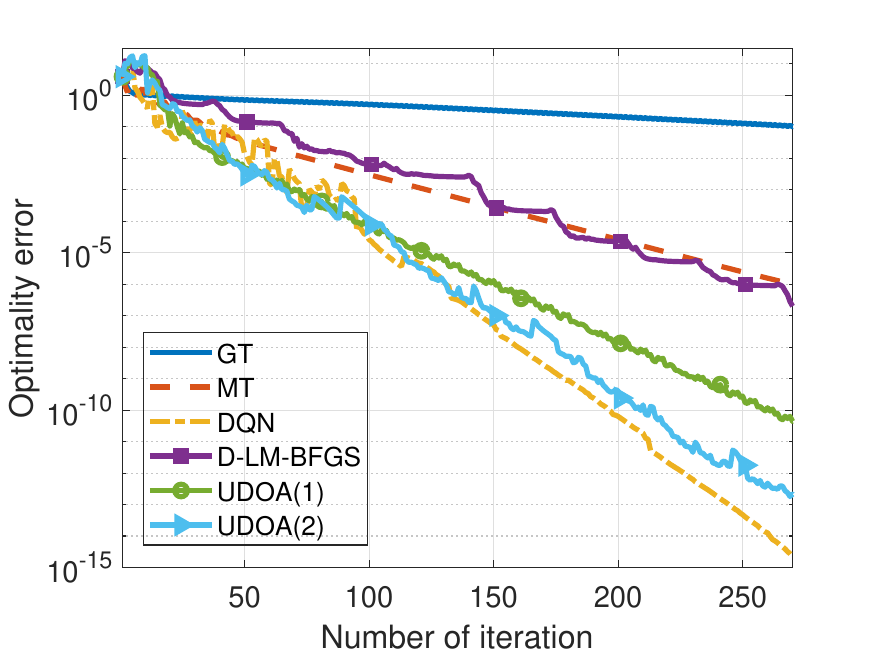}%
		\label{fig1_iter}}
	\subfloat[]{\includegraphics[width=2.5in]{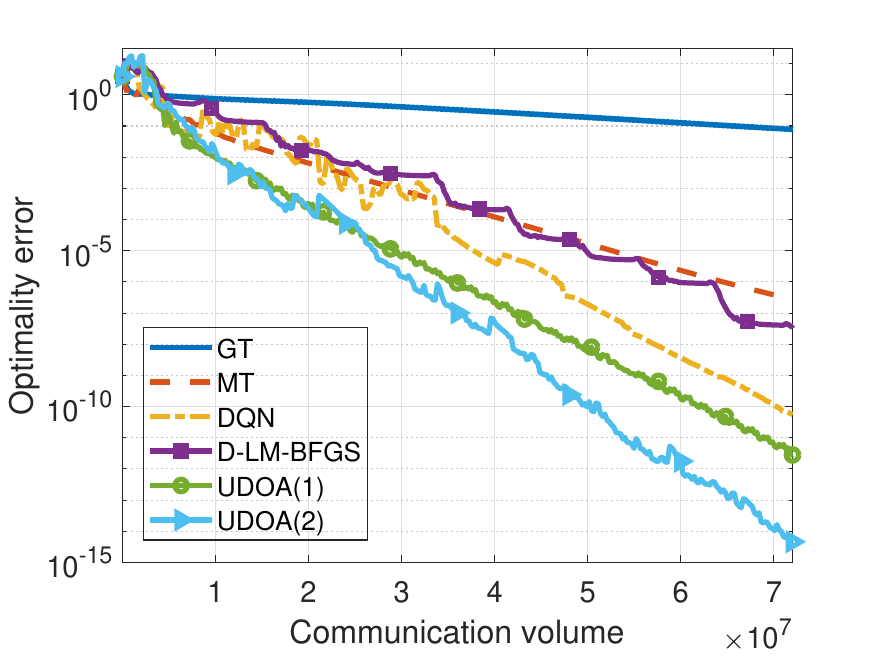}%
		\label{fig1_com}}
	\hfil
	\subfloat[]{\includegraphics[width=2.5in]{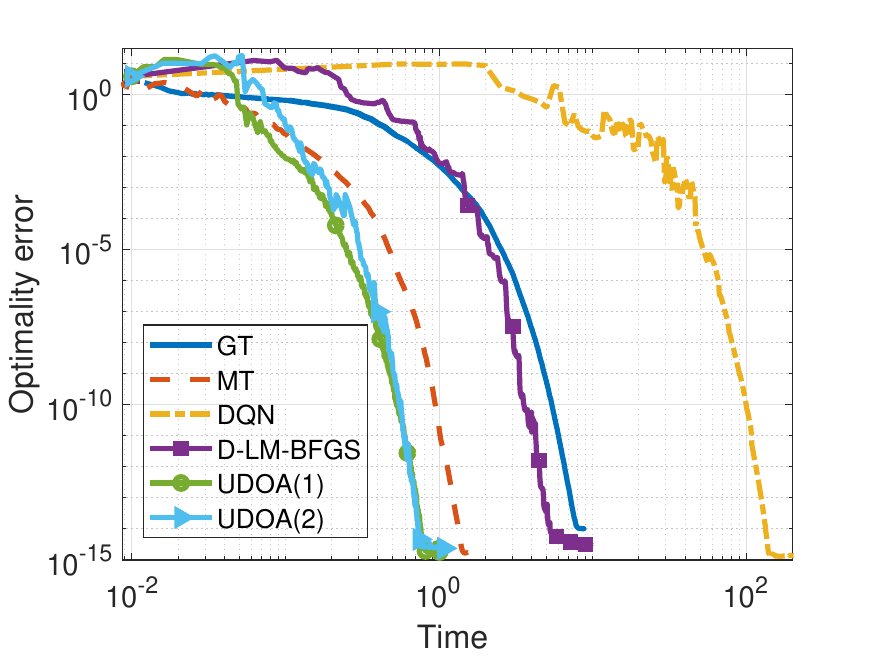}%
		\label{fig1_time}}
	\caption{Optimality error of quasi-Newton decentralized algorithms for minimizing the nonconvex logistic regression problem \eqref{nonconvex_logistic_problem} on \textbf{colon-cancer} datasets.}
	\label{colon}
\end{figure*}

\begin{figure*}[!t]
	\centering
	\subfloat[]{\includegraphics[width=2.5in]{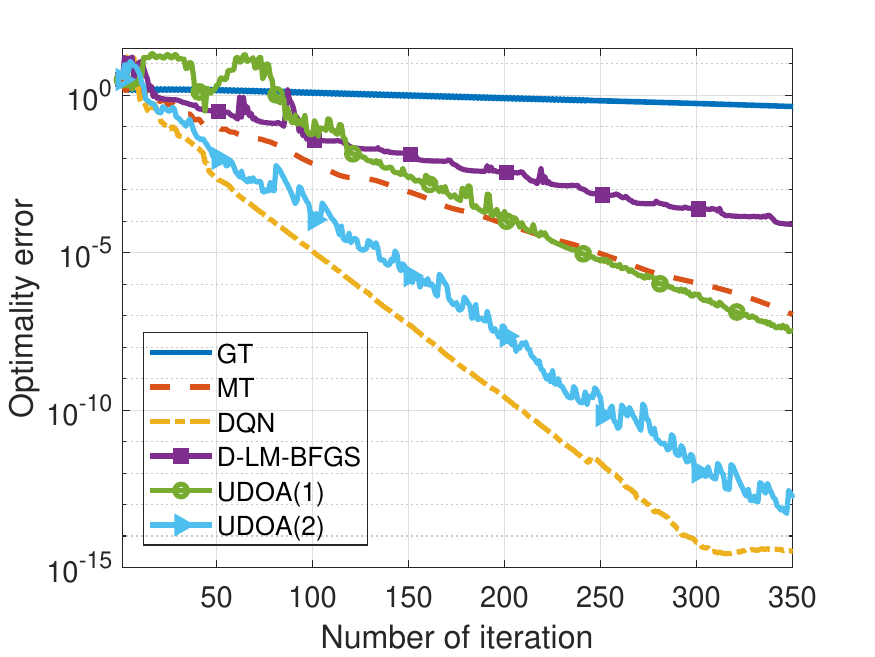}%
		\label{fig_iter}}
	\subfloat[]{\includegraphics[width=2.5in]{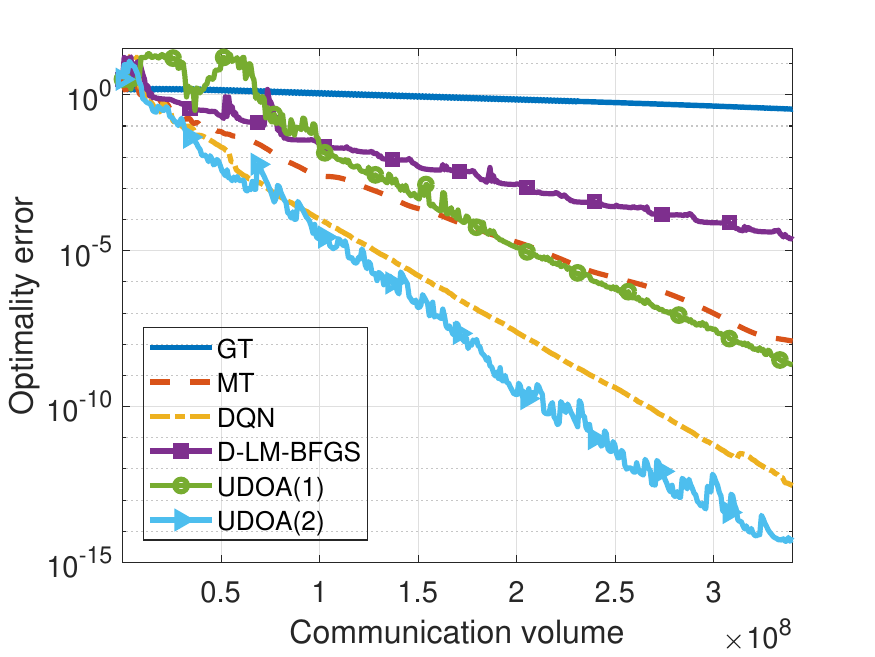}%
		\label{fig_com}}
	\hfil
	\subfloat[]{\includegraphics[width=2.5in]{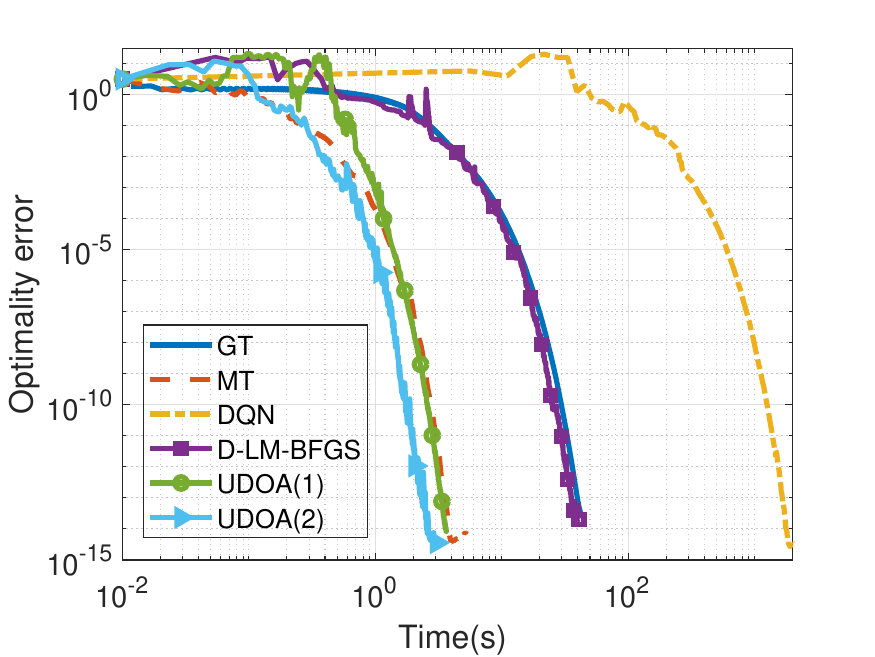}%
		\label{fig_time}}
	\caption{Comparisons with quasi-Newton algorithms for minimizing the nonconvex logistic regression problem \eqref{nonconvex_logistic_problem} on \textbf{duke breast-cancer} datasets.}
	\label{breast}
\end{figure*}

\section{CONCLUSIONS}
In this paper, we investigate the unification and generalization of various important algorithms for solving nonconvex decentralized optimization,
 where local nodes collaboratively minimize the sum of their local nonconvex objectives. We provide a unified algorithmic framework that includes a wide range of decentralized 
gradient tracking methods and quasi-Newton methods. Within a unified analytical structure, 
under proper choices of the stepsizes, we show a subsequence of the iterates converges to stationary point. 
Under the K{\L} property of the aggregated objective function, 
we establish the entire iterate sequence converges with a specific non-asymptotic rate.
Moreover,  several new quasi-Newton decentralized algorithms within our unified algorithmic framework
are proposed. 
Our  numerical experiments on solving the nonconvex decentralized binary classification problem indicate that the newly proposed quasi-Newton algorithms exhibit generally superior performance versus other
well-established comparison algorithms, such as GT, GUT, DSMT, MT, DQN, D-LM-BFGS, and DR-LM-DFP.


\bibliographystyle{amsplain}
\bibliography{refs}

\appendix
\section{Analytical tools}
\begin{lemma}\label{young}
	(Young's inequality) For any two vectors $\m{v}_1,\m{v}_2\in \mathbb{R}^p$, 
	\begin{align*}
		2\m{v}_1\tr\m{v}_2 \leq& \eta \|\m{v}_1\|^2+ \frac{1}{\eta}\|\m{v}_2\|^2,\\
		\|\m{v}_1+\m{v}_2\|^2 \leq& (1+\eta) \|\m{v}_1\|^2+ \left(1+\frac{1}{\eta}\right)\|\m{v}_2\|^2.
	\end{align*}
\end{lemma}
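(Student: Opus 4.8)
The plan is to derive both inequalities from the single elementary fact that the squared norm of any real vector is nonnegative, under the standing (implicit) hypothesis that $\eta>0$.

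First I would establish the cross-term inequality by completing the square. Since $\eta>0$, the vector $\sqrt{\eta}\,\m{v}_1-\tfrac{1}{\sqrt{\eta}}\m{v}_2$ is well defined, and expanding its squared Euclidean norm gives
\begin{align*}
0 \le \left\|\sqrt{\eta}\,\m{v}_1-\tfrac{1}{\sqrt{\eta}}\m{v}_2\right\|^2
= \eta\|\m{v}_1\|^2 - 2\m{v}_1\tr\m{v}_2 + \tfrac{1}{\eta}\|\m{v}_2\|^2.
\end{align*}
Rearranging isolates $2\m{v}_1\tr\m{v}_2 \le \eta\|\m{v}_1\|^2+\tfrac{1}{\eta}\|\m{v}_2\|^2$, which is the first claim.

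Next I would obtain the second inequality as an immediate consequence of the first. Expanding $\|\m{v}_1+\m{v}_2\|^2=\|\m{v}_1\|^2+2\m{v}_1\tr\m{v}_2+\|\m{v}_2\|^2$ and bounding the middle cross term $2\m{v}_1\tr\m{v}_2$ by the inequality just proved yields
\begin{align*}
\|\m{v}_1+\m{v}_2\|^2
\le \|\m{v}_1\|^2+\eta\|\m{v}_1\|^2+\tfrac{1}{\eta}\|\m{v}_2\|^2+\|\m{v}_2\|^2
= (1+\eta)\|\m{v}_1\|^2+\left(1+\tfrac{1}{\eta}\right)\|\m{v}_2\|^2,
\end{align*}
which completes the proof.

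There is essentially no obstacle here, as both bounds reduce to a single square-completion; the only point requiring a moment's care is the well-definedness of $\sqrt{\eta}$ and $1/\sqrt{\eta}$, so I would state explicitly that the lemma is asserted for $\eta>0$. If desired, one can also record that equality holds in both bounds precisely when $\sqrt{\eta}\,\m{v}_1=\tfrac{1}{\sqrt{\eta}}\m{v}_2$, i.e. $\m{v}_2=\eta\,\m{v}_1$, which confirms the constants are sharp for each fixed $\eta$.
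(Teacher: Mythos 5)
Your proof is correct and complete: the paper states this lemma in its appendix as a standard analytical tool without providing any proof, and your completing-the-square argument (expanding $\|\sqrt{\eta}\,\m{v}_1-\tfrac{1}{\sqrt{\eta}}\m{v}_2\|^2\ge 0$ and then applying the first bound to the expansion of $\|\m{v}_1+\m{v}_2\|^2$) is exactly the standard derivation the paper implicitly relies on. Your explicit remark that $\eta>0$ is required, and the sharpness observation $\m{v}_2=\eta\,\m{v}_1$, are both correct and slightly more careful than the paper's unproved statement.
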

\begin{lemma}\label{important}
	(Jensen's inequality) For any set of vectors $\{\m{v}_i\}_{i=1}^n \subset \mathbb{R}^p$,
	$$\left\|\frac{1}{n}\sum_{i=1}^n\m{v}_i\right\|^2\leq\frac{1}{n}\sum_{i=1}^n\|\m{v}_i\|^2;$$ 
	and for any set of matrices $\{\m{N}_i\}_{i=1}^n \subset \mathbb{R}^{n\times p}$,
	$$\left\|\frac{1}{n}\sum_{i=1}^n\m{N}_i\right\|^2\leq\frac{1}{n}\sum_{i=1}^n\|\m{N}_i\|^2.$$ 
\end{lemma}
\begin{lemma}\label{important2}
	\cite[Lemma 5]{chen2024enhancing} 
	For $0 \leq a_2 \leq a_1$ and $\theta<1$, we have
	$$
	a_1-a_2 \leq \frac{1}{1-\theta} a_1^{\theta} \left(a_1^{1-\theta}-a_2^{1-\theta}\right).
	$$ 
\end{lemma}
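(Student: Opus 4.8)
The plan is to reduce this two-variable inequality to the gradient inequality for a concave power function. First I would dispose of the degenerate cases. If $a_1 = 0$, then $0 \le a_2 \le a_1$ forces $a_2 = 0$, so both sides vanish and the claim holds trivially. If $\theta = 0$, the right-hand side equals $a_1 - a_2$ and the inequality is an identity. Hence I may assume $a_1 > 0$ and $\theta \in (0,1)$, so that the exponent $1-\theta$ lies in $(0,1)$; this is precisely the range of interest, since the lemma is invoked only with a K{\L} exponent $\theta \in [0,1)$.

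The key observation is that $\phi(s) = s^{1-\theta}$ is concave on $[0,\infty)$ when $1-\theta \in (0,1)$, with derivative $\phi'(s) = (1-\theta)s^{-\theta}$. The defining gradient inequality for a differentiable concave function, applied with base point $a_1$ and test point $a_2$, reads
$$\phi(a_2) \le \phi(a_1) + \phi'(a_1)(a_2 - a_1),$$
that is,
$$a_1^{1-\theta} - a_2^{1-\theta} \ge (1-\theta)\,a_1^{-\theta}(a_1 - a_2).$$
Since $1-\theta > 0$ and $a_1^{\theta} > 0$, multiplying through by $a_1^{\theta}/(1-\theta)$ preserves the direction and yields exactly
$$\frac{1}{1-\theta}\,a_1^{\theta}\bigl(a_1^{1-\theta} - a_2^{1-\theta}\bigr) \ge a_1 - a_2,$$
which is the claimed bound.

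There is no genuinely hard step here; the only points requiring care are the sign bookkeeping (using $1-\theta>0$ so that scaling preserves the inequality) and orienting the concavity gradient inequality so that it furnishes a \emph{lower} bound on the finite difference $a_1^{1-\theta}-a_2^{1-\theta}$. As an alternative route that avoids invoking concavity as a black box, I would normalize by setting $t = a_2/a_1 \in [0,1]$, reducing the claim to the single-variable inequality $(1-\theta)(1-t) \le 1 - t^{1-\theta}$. Defining $g(t) = 1 - t^{1-\theta} - (1-\theta)(1-t)$, one checks $g(1)=0$ and $g'(t) = (1-\theta)\bigl(1 - t^{-\theta}\bigr) < 0$ on $(0,1)$, so $g$ is decreasing and therefore nonnegative on $[0,1]$. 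Either argument establishes the stated inequality on the full range $\theta \in [0,1)$ used throughout the paper.
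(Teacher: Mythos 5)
Your proof is correct. Note that the paper itself contains no argument for this lemma: it is quoted directly from \cite[Lemma 5]{chen2024enhancing}, so there is no internal proof to compare against, and your derivation supplies a valid self-contained justification. Both of your routes check out: the gradient inequality for the concave map $s \mapsto s^{1-\theta}$ (with $1-\theta \in (0,1)$), applied at base point $a_1$ and test point $a_2$, yields $a_1^{1-\theta}-a_2^{1-\theta} \geq (1-\theta)a_1^{-\theta}(a_1-a_2)$, and multiplying by $a_1^{\theta}/(1-\theta)>0$ gives the claim; the normalized version with $t=a_2/a_1$, $g(t)=1-t^{1-\theta}-(1-\theta)(1-t)$, $g(1)=0$, and $g'(t)=(1-\theta)(1-t^{-\theta})<0$ on $(0,1)$ is equally airtight, and you handle the degenerate cases $a_1=0$ and $\theta=0$ properly.

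One point worth flagging: your restriction to $\theta \in [0,1)$ is not merely a convenience but a necessity, so you have in effect caught a small over-statement in the lemma's hypothesis. As literally stated, the lemma allows any $\theta<1$, yet for $\theta<0$ the inequality fails: taking $a_1=1$, $a_2=0$, $\theta=-1$ gives left side $1$ and right side $\tfrac{1}{2}$. Equivalently, when $1-\theta>1$ the map $s\mapsto s^{1-\theta}$ is convex rather than concave, so the gradient inequality you rely on reverses direction. Since the paper only ever invokes the lemma with a K{\L} exponent $\theta\in[0,1)$ (e.g., in the derivation of \eqref{Tt2}), your proof covers every actual use, but the hypothesis in the statement should properly read $\theta\in[0,1)$.
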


\begin{lemma}\label{important3}
	\cite[Theorem 2]{attouch2009convergence} 
	For the nonnegative sequence $\{a^t\}$ satisfying
	$$
	(a^{t})^{\frac{\theta}{1-\theta}} \leq c (a^{t-1}-a^{t}),
	$$
	where $\theta \in (1/2,1)$ and $c>0$, there exists $c'>0$ such that
	$$
	a^t \leq c' t^{-\frac{1-\theta}{2\theta-1}}.
	$$
\end{lemma}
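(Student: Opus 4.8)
The plan is to track the reciprocal power $\varphi(s) = s^{-\beta}$ of the sequence, where $\beta := \frac{2\theta-1}{1-\theta} > 0$ (positivity holds since $\theta > 1/2$). This choice is dictated by two facts: the target exponent is exactly $1/\beta = \frac{1-\theta}{2\theta-1}$, and $\beta + 1 = \frac{\theta}{1-\theta}$, so the driving recursion reads $(a^t)^{\beta+1} \le c(a^{t-1} - a^t)$. First I would dispose of the degenerate case: the recursion forces $a^{t-1} - a^t \ge (a^t)^{\beta+1}/c \ge 0$, so $\{a^t\}$ is nonincreasing; if $a^{t_0} = 0$ for some $t_0$ then $a^t = 0$ for all $t \ge t_0$ and the bound is trivial, so I may assume $a^t > 0$ (hence $0 < a^t \le a^0$) for every $t$.

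The core of the argument is to show that a single telescoping quantity grows at least linearly, namely $\varphi(a^t) - \varphi(a^{t-1}) \ge \mu$ for a constant $\mu > 0$ independent of $t$. I would establish this through a dichotomy on how fast the sequence decreases at step $t$. In the moderate-decrease regime $a^{t-1} \le 2 a^t$, I would apply the mean value theorem to the convex decreasing function $\varphi$: there is $\xi \in [a^t, a^{t-1}]$ with $\varphi(a^t) - \varphi(a^{t-1}) = \beta \xi^{-\beta-1}(a^{t-1}-a^t)$; bounding $\xi \le a^{t-1} \le 2a^t$ and then invoking $a^{t-1}-a^t \ge (a^t)^{\beta+1}/c$ gives $\varphi(a^t)-\varphi(a^{t-1}) \ge \beta 2^{-\beta-1}(a^t)^{-\beta-1}(a^t)^{\beta+1}/c = \beta 2^{-\beta-1}/c =: \mu_1$, where the powers of $a^t$ cancel precisely because $\beta+1 = \theta/(1-\theta)$. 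In the rapid-decrease regime $a^{t-1} > 2a^t$, the mean value estimate degrades, so instead I would use a crude ratio bound: $\varphi(a^t) = (a^t)^{-\beta} > 2^\beta (a^{t-1})^{-\beta} = 2^\beta \varphi(a^{t-1})$, whence $\varphi(a^t) - \varphi(a^{t-1}) \ge (2^\beta - 1)\varphi(a^{t-1}) \ge (2^\beta - 1)(a^0)^{-\beta} =: \mu_2$, using $a^{t-1} \le a^0$. Setting $\mu = \min\{\mu_1, \mu_2\} > 0$ covers both cases.

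With the per-step lower bound in hand the conclusion is immediate: summing $\varphi(a^s) - \varphi(a^{s-1}) \ge \mu$ over $s = 1, \dots, t$ telescopes to $(a^t)^{-\beta} = \varphi(a^t) \ge \varphi(a^0) + \mu t \ge \mu t$, and inverting yields $a^t \le (\mu t)^{-1/\beta} = \mu^{-1/\beta} t^{-1/\beta}$, which is the claimed bound with $c' := \mu^{-1/\beta}$ and $1/\beta = \frac{1-\theta}{2\theta-1}$.

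I expect the dichotomy to be the main obstacle, or at least the only nonroutine idea. The recursion by itself cannot give uniform linear growth of $\varphi(a^t)$, because when the sequence drops sharply the intermediate point $\xi$ in the mean value theorem can be as large as $a^{t-1}$, making the factor $\xi^{-\beta-1}$ too small to absorb via the recursion; recognizing that this rapid-decrease case can instead be handled directly by a multiplicative ratio estimate, and that the two regimes glue together into a single uniform constant $\mu$, is the key step. The remaining work — verifying $\beta+1 = \theta/(1-\theta)$ so the powers of $a^t$ cancel, and the final inversion — is routine.
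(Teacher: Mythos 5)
Your proof is correct. Note that the paper itself gives no proof of this lemma --- it is quoted verbatim from Attouch--Bolte (the cited Theorem~2) --- and your dichotomy argument (mean-value estimate on $\varphi(s)=s^{-\beta}$ with $\beta=\frac{2\theta-1}{1-\theta}$ in the moderate-decrease regime, multiplicative ratio bound in the rapid-decrease regime, then telescoping $\varphi(a^t)\ge \mu t$) is essentially the classical proof from that reference, which uses an integral comparison where you use the mean value theorem; the two estimates are interchangeable. The only points you gloss over are harmless: under positivity the recursion forces strict decrease (so the MVT interval is nondegenerate), and if the sequence hits zero at some $t_0$ the telescoping argument still covers the finitely many indices $t<t_0$ with the same constant.
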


\section{Algorithmic procedure for UDOAs}

\begin{algorithm}[htb]
	\caption{The memoryless SR1 method -- UDOA(1)}
	\label{UDOA1}
	
	\SetKwInOut{Input}{Input}
	\SetKwInOut{Output}{Output}
	\SetKw{Parallel}{in parallel}
	
	\Input{Initial point $\m{x}_i^0,~i\in \mathcal{V}=\{1,\ldots,n\}$, Maximum iteration $T>0$, Stepsize $\alpha>0$, Mixing matrix $\tilde{\m{W}}$, Parameters $u \gg l > 0$ (with $l \leq 1 \leq u$).}
	\Output{$\{\m{x}_i^T\}_{i=1}^n$.}
	
	\BlankLine
	Initialize $t = 0$ and $\m{d}_i^{0} = -\m{v}_i^{0} = -\m{g}_i^{0}$ for all $i \in \mathcal{V}$\;
	
	\While{$t < T$}{
		\For{each node $i \in \mathcal{V}$ \Parallel}{
			$\m{x}_i^{t+1} = \sum_{j \in \mathcal{N}_i}\tilde{W}_{ij} (\m{x}_j^{t}+\alpha \m{d}_j^{t})$\;
			$\m{v}_i^{t+1} = \sum_{j \in \mathcal{N}_i}\tilde{W}_{ij}(\m{v}_j^{t}+\m{g}_j^{t+1}-\m{g}_j^{t})$\;
			
			$\m{s}_i^{t} = \m{x}_i^{t+1}-\m{x}_i^t$, $\check{\m{y}}_i^{t} = \m{v}_i^{t+1}-\m{v}_i^t$, and $ \m{w}_i^t = \m{s}_i^{t} - \check{\m{y}}_i^{t}$\;
			
			\eIf{$ (\m{w}_i^t)\tr \check{\m{y}}_i^{t} \neq 0$ \bf{and} $\left(1 + \frac{\|\m{w}_i^t\|^2}{ (\m{w}_i^t)\tr \check{\m{y}}_i^{t}} \right) \in [{l},{u}]$}{
				$\m{d}_i^{t+1} = -\m{v}_i^{t+1}-\frac{(\m{w}_i^t)\tr\m{v}_i^{t+1}}{ (\m{w}_i^t)\tr \check{\m{y}}_i^{t}} \m{w}_i^t$\;
			}{
				$\m{d}_i^{t+1} = -\m{v}_i^{t+1}$\;
			}
		}
		$t = t+1$\;
	}
\end{algorithm}

\begin{algorithm}[htb]
	\caption{The memoryless BFGS method -- UDOA(2)}
	\label{UDOA2}
	
	\SetKwInOut{Input}{Input}
	\SetKwInOut{Output}{Output}
	\SetKw{Parallel}{in parallel}
	
	\Input{Initial point $\m{x}_i^0,~i\in \mathcal{V}=\{1,\ldots,n\}$, Maximum iteration $T>0$, Stepsize $\alpha>0$, Mixing matrix $\tilde{\m{W}}$, Parameters $u \gg l >0, \varrho>0$.}
	\Output{$\{\m{x}_i^T\}_{i=1}^n$.}
	
	\BlankLine
	Initialize $t = 0$ and $\m{d}_i^{0} = -\m{v}_i^{0} = -\m{g}_i^{0}$ for all $i \in \mathcal{V}$\;
	
	\While{$t < T$}{
		\For{each node $i \in \mathcal{V}$ \Parallel}{
			
			$\m{x}_i^{t+1} = \sum_{j \in \mathcal{N}_i}\tilde{W}_{ij} (\m{x}_j^{t}+\alpha \m{d}_j^{t})$\;
			$\m{v}_i^{t+1} = \sum_{j \in \mathcal{N}_i}\tilde{W}_{ij}(\m{v}_j^{t}+\m{g}_j^{t+1}-\m{g}_j^{t})$\;
			
			$\m{s}_i^{t} = \m{x}_i^{t+1}-\m{x}_i^t$ and $\check{\m{y}}_i^{t} = \m{v}_i^{t+1}-\m{v}_i^t$\;

			\eIf{$(\m{s}_i^{t})\tr \check{\m{y}}_i^t > 0$ \bf{and} $[\lambda_{\min } (H_i^t(\check{\m{y}}_i^t)),\lambda_{\max } (H_i^t(\check{\m{y}}_i^t))] \subset [l,u]$}{
				$\m{y}_i^t = \check{\m{y}}_i^t$\;
			}{
				$h_i^t = \varrho+\max\left\{-\frac{(\m{s}_i^t)\tr(\m{g}_i^{t+1}-\m{g}_i^t)}{\|\m{s}_i^t\|^2},0\right\}$\;
				$\m{y}_i^t = \m{g}_i^{t+1}-\m{g}_i^t+h_i^t\m{s}_i^t$\;
			}

			$\m{d}_i^{t+1} = -\frac{(\m{s}_i^t)\tr \m{y}_i^t}{\|\m{y}_i^t\|^2} \m{v}_i^{t+1} +
			\left(\frac{(\m{y}_i^t)\tr\m{v}_i^{t+1}}{\|\m{y}_i^t\|^2} -2 \frac{(\m{s}_i^t)\tr\m{v}_i^{t+1}}{(\m{s}_i^{t})\tr\m{y}_i^t}\right)\m{s}_i^t +
			\frac{(\m{s}_i^t)\tr\m{v}_i^{t+1}}{\|\m{y}_i^t\|^2}\m{y}_i^t$\;
		}
		$t = t+1$\;
	}
\end{algorithm}

\begin{algorithm}[htb]
	\caption{The corrected quasi-Newton method -- UDOA(3), UDOA(4)}
	\label{UDOA3}
	
	\SetKwInOut{Input}{Input}
	\SetKwInOut{Output}{Output}
	\SetKw{Parallel}{in parallel}
	
	\Input{Initial point $\m{x}_i^0,~i\in \mathcal{V}=\{1,\ldots,n\}$, Maximum iteration $T>0$, Stepsize $\alpha>0$, Mixing matrix $\tilde{\m{W}}$, Parameters $1>\lambda>0, \hat{L}>0, \tau \in \{1,2\}$.}
	\Output{$\{\m{x}_i^T\}_{i=1}^n$.}
	
	\BlankLine
	Initialize $t = 0$ and $\m{d}_i^{0} = -\m{v}_i^{0} = -\m{g}_i^{0}$ for all $i\in \mathcal{V}$\;
	
	\While{$t < T$}{
		\For{each node $i \in \mathcal{V}$ \Parallel}{
			$\m{x}_i^{t+1} = \sum_{j \in \mathcal{N}_i}\tilde{W}_{ij} (\m{x}_j^{t}+\alpha \m{d}_j^{t})$\;
			$\m{v}_i^{t+1} = \sum_{j \in \mathcal{N}_i}\tilde{W}_{ij}(\m{v}_j^{t}+\m{g}_j^{t+1}-\m{g}_j^{t})$\;
			$\m{s}_i^{t} = \m{x}_i^{t+1}-\m{x}_i^t$ and $\check{\m{y}}_i^{t} = \m{v}_i^{t+1}-\m{v}_i^t$\;
			
			\eIf{$(\m{s}^{t}_{i})\tr\check{\m{y}}^{t}_i \leq \lambda\|\m{s}^{t}_{i}\|^2$}{
				$\hat{\eta}_i^t = \frac{(1-\lambda)\|\m{s}^{t}_{i}\|^2}{\|\m{s}^{t}_{i}\|^2-(\m{s}^{t}_{i})\tr\check{\m{y}}^{t}_i}$\;
			}{
				$\hat{\eta}_i^t = 1$\;
			}
			
			$\eta_i^t = \min\left\{\hat{\eta}_i^t,\frac{\hat{L}\|\m{s}^{t}_{i}\|}{\|\check{\m{y}}^{t}_i\|}\right\}$\;
			$\breve{\m{y}}^{t}_i = \eta_i^t\check{\m{y}}^{t}_i+(1-\eta_i^t)\m{s}^{t}_{i}$\;

			$p^{t}_i = \frac{\|\breve{\m{y}}^{t}_i\|^2}{(\m{s}^{t}_{i})\tr\breve{\m{y}}^{t}_i}$, 
			$\m{z}^{t}_i = \breve{\m{y}}^{t}_i-\tau p^{t}_i\m{s}^{t}_i$\;
			
			$\m{d}_i^{t+1} = -\m{v}_i^{t+1}+\frac{(\m{z}^{t}_i)\tr\m{v}_i^{t+1}}{2(\m{s}^{t}_{i})\tr\breve{\m{y}}^{t}_i}\m{s}^{t}_{i}+\frac{(\m{s}^{t}_{i})\tr\m{v}_i^{t+1}}{2(\m{s}^{t}_{i})\tr\breve{\m{y}}^{t}_i}\m{z}^{t}_i$\;
		}
		$t = t+1$\;
	}
\end{algorithm}
\end{document}